\setlist[enumerate]{label={(\roman*)}}
\theoremstyle{plain}
\newtheorem{theorem}{Theorem}
\newtheorem{corollary}[theorem]{Corollary}
\newtheorem{lemma}[theorem]{Lemma}
\newtheorem{proposition}[theorem]{Proposition}
\theoremstyle{definition}
\newtheorem{definition}[theorem]{Definition}
\theoremstyle{remark}
\newtheorem*{remark}{Remark}
\numberwithin{theorem}{section} 
\newcommand{\Spec}[1]{\operatorname{Spec}(#1)}
\newcommand{\Add}{\mbox{\rm{Add\,}}}
\newcommand{\VAss}{\mbox{\rm{VAss\,}}}
\newcommand{\Loc}{\mbox{\rm{Loc\,}}}
\newcommand{\Coloc}{\mbox{\rm{Coloc\,}}}
\newcommand{\Prod}{\mbox{\rm{Prod\,}}}
\newcommand{\Ass}[2]{\mbox{\rm{Ass}}_{#1}(#2)}
\newcommand{\Hom}[3]{\operatorname{Hom}_{#1}(#2,#3)}
\newcommand{\Ext}[4]{\operatorname{Ext}^{#1}_{#2}(#3,#4)}
\newcommand{\Tor}[4]{\mbox{\rm{Tor}}_{#1}^{#2}(#3,#4)}
\newcommand{\rmod}[1]{\mbox{\rm{Mod}-}{#1}}
\newcommand{\ProjR}{\mathrm{Proj}\text{-}R}
\newcommand{\ModR}{\mathrm{Mod}\text{-}R}
\newcommand{\ModS}{\mathrm{Mod}\text{-}S}
\newcommand{\Qcoh}[1]{\mathrm{Qcoh}({#1})}
\newcommand{\pd}[1]{\mbox{\rm{proj.dim\,}}#1}
\newcommand{\supph}[1]{\mbox{\rm{supph}} \,#1}
\newcommand{\Ker}[1]{\mbox{\rm{Ker}}(#1)}
\newcommand{\p}{\mathfrak{p}}
\newcommand{\q}{\mathfrak{q}}
\newif\ifcomments
\definecolor{MichalH}{rgb}{0.9,0.1,0.1}
\definecolor{JanS}{rgb}{0.2,0.8,0.0}
\definecolor{JanT}{rgb}{0.3,0.3,0.9}
\newcommand{\MichalH}[1]{{\color{MichalH}{#1}}}
\newcommand{\JanS}[1]{{\color{JanS}{#1}}}
\newcommand{\JanT}[1]{{\color{JanT}{#1}}}
\newcommand{\MichalH}[1]{}
\newcommand{\JanS}[1]{}
\newcommand{\JanT}[1]{}
\begin{document}

\title{Zariski locality of quasi-coherent sheaves associated with tilting}
\author{Michal Hrbek}
\address{Institute of Mathematics CAS, \v{Z}itn\'{a} 25, 115 67 Prague, Czech Republic}
\email{hrbek@math.cas.cz}
\author{Jan \v S\v tov\'\i\v cek}
\address{Charles University, Faculty of Mathematics and Physics, Department of Algebra \\
Sokolovsk\'{a} 83, 186 75 Prague 8, Czech Republic}
\email{stovicek@karlin.mff.cuni.cz}
\author{Jan Trlifaj}
\address{Charles University, Faculty of Mathematics and Physics, Department of Algebra \\
Sokolovsk\'{a} 83, 186 75 Prague 8, Czech Republic}
\email{trlifaj@karlin.mff.cuni.cz}
\date{\today}
\subjclass[2010]{Primary: 14F05, 16D40. Secondary: 13C13, 13D07, 18E15, 18F20.}
\keywords{tilting module, locally tilting quasi-coherent sheaf, Zariski locality, Mittag-Leffler module}
\thanks{Research supported by the Eduard \v Cech Institute for Algebra, Geometry and Mathematical Physics (GA\v CR project P201/12/G028). Research of M.H. also supported by RVO: 6798584.}
\begin{abstract} A classic result by Raynaud and Gruson says that the notion of an (infinite dimensional) vector bundle is Zariski local. This result may be viewed as a particular instance (for $n = 0$) of the locality of more general notions of quasi-coherent sheaves related to (infinite dimensional) $n$-tilting modules and classes. Here, we prove the latter locality for all $n$ and all schemes. We also prove that the notion of a tilting module descends along arbitrary faithfully flat ring morphisms in several particular cases (including the case when the base ring is noetherian).
\end{abstract}


\maketitle
\tableofcontents

\section*{Introduction} 
The category $\Qcoh{X}$ of quasi-coherent sheaves on an arbitrary scheme $X$ is known to be equivalent to the category of quasi-coherent representations of the graph $G$ whose vertices are open affine subschemes $U \subseteq X$, and arrows $U \to U^\prime$ correspond to the inclusions $U^\prime \subseteq U$, see \cite{EE}. If $\mathcal O _{\!X}$ denotes the structure sheaf, then the representation corresponding to a quasi-coherent sheaf $\mathcal Q$ on $X$ assigns to each vertex $U$ the $\mathcal O _{\!X}(U)$-module of sections $\mathcal Q(U)$, and to each arrow $U \to U^\prime$ the restriction of sections $\mathcal O _{\!X}(U)$-homomorphism $\mathcal Q(U) \to \mathcal Q(U^\prime)$.

As pointed out in \cite{EE}, we do not have to consider all open affine subschemes $U$ of $X$ to form the graph above. A subset $G^\prime \subseteq G$ will do, provided that $G^\prime$ covers both $X$ and all the intersections $U \cap U^\prime$ for $U, U^\prime \in G^\prime$. Then again $\Qcoh{X}$ will be equivalent to the category of quasi-coherent representations of the graph $G^\prime$. One can view the choice of $G^\prime$ as a choice of coordinates on $X$. 

We can easily extend various notions defined for modules over commutative rings to the (global) setting of quasi-coherent sheaves on schemes using coordinates as follows. Given such notion, i.e., for each commutative ring $R$, a property $\mathfrak P _R$ of $R$-modules, and a quasi-coherent sheaf $\mathcal Q$ on a scheme $X$, we simply require that for each open affine subscheme $U \in G^\prime$, the $\mathcal O _{\!X}(U)$-module of sections $\mathcal Q(U)$ satisfy the property $\mathfrak P _{\mathcal O _{\!X}(U)}$. For example, if $\mathfrak P _R$ denotes the property of being a projective $R$-module, then the corresponding global notion is that of an (infinite dimensional) vector bundle, \cite{D}. 

Of course, when extending a module theoretic notion to quasi-coherent sheaves on schemes as above, we want the resulting global notion to be independent of the choice of coordinates. In other words, we wish to test for the global notion using some coordinates $G^\prime$, but at the same time, require it to be independent of our particular choice of these coordinates. This is exactly the requirement of the \emph{Zariski locality} of the global notion, also known as \emph{affine-locality}, \cite[5.3.2]{V}. 

A classic result by Raynaud and Gruson says that the notion of a vector bundle is Zariski local for any scheme $X$, \cite{RG}. In \cite{EGT}, this has been generalized to show that the notion of a $\kappa$-restricted Drinfeld vector bundle is Zariski local for each infinite cardinal $\kappa$ (the case of $\kappa = \aleph_0$ being the classic one, cf.\ \cite{EGPT}). 

In the present paper, we pursue a different path and consider generalizations of vector bundles suggested by (infinite dimensional) tilting theory. Our starting point is the easy fact that projective generators coincide with $0$-tilting modules, and projective modules are exactly the elements of the left class, and of the kernel, of the $0$-tilting cotorsion pair $(\ProjR, \rmod R)$. Replacing $0$-tilting modules and classes with general $n$-tilting ones (where $n \geq 0$), we obtain thus the global notion of a locally $n$-tilting quasi-coherent sheaf, and of locally left, and kernel $n$-tilting, quasi-coherent sheaves. Since tilting modules are projective objects in the tilting t-structures which they induce (see e.g.~\cite[\S1]{PS}), locally kernel $n$-tilting objects can be viewed as vector bundles in certain categories tilted from the category of quasi-coherent sheaves.

Our main result, Theorem~\ref{main} below, says that the notions of locally $n$-tilting, locally left $n$-tilting, and locally kernel $n$-tilting quasi-coherent sheaf all are Zariski local for any scheme $X$. Our proof relies on relative Mittag-Leffler conditions and their relation to tilting discovered in \cite{AH}, and on the structure theory of tilting classes involving characteristic sequences of Thomason sets in $\Spec R$, developed for commutative noetherian rings in \cite{APST}, and more recently, in \cite{HS} for general commutative rings.  

The Zariski locality of $\kappa$-restricted Drinfeld vector bundles was proved in \cite{EGPT} by showing a stronger result, namely that the corresponding property of modules, the $\kappa$-restricted Mittag-Leffler property, is an ad-property. The latter means that the property ascends along any flat ring homomorphism, and descends along any faithfully flat ring monomorphism. 

In Theorem~\ref{main-1dim}, we show that the notion of a $1$-tilting module is also an ad-property. We do not know whether this extends to $n$-tilting modules for $n > 1$ in general, the missing piece being a proof of the faithfully flat descent for $n$-tilting modules with $n>1$. However, we do prove the general descent in two particular cases---when the base ring is noetherian (Corollary~\ref{main-noetherian}), and when the corresponding characteristic sequence is ``basic'' (Theorem~\ref{main-basic}).

\section{Preliminaries}
\label{sec:prelim}

For a ring $R$, we will denote by $\rmod R$ the class of all (right $R$-) modules. A~module $M$ is called \emph{strongly finitely (countably) presented} provided that $M$ has a~projective resolution consisting of finitely (countably) generated modules. More in general, if $\kappa$ is an infinite cardinal, then $M$ is \emph{strongly $< \kappa$-presented} provided that $M$ has a projective resolution consisting of $< \kappa$-generated modules.  

A \emph{filtration} of a module $M$ is a chain $\mathcal M = (M_\alpha \mid \alpha \leq \sigma)$ of submodules of $M$ such that $M_ 0 = 0$, $M_\sigma = M$, $M_\alpha \subseteq M_{\alpha + 1}$ for each $\alpha < \sigma$, and $M_{\alpha} = \bigcup_{\beta < \alpha} M_{\beta}$ for each limit ordinal $\alpha \leq \sigma$.     

Let $\mathcal C$ be a class of modules. A \emph{$\mathcal C$-filtration} of a module $M$ is a filtration $\mathcal M$ as above, such that for each $\alpha < \sigma$, $M_{\alpha + 1}/M_{\alpha}$ is isomorphic to an element of $\mathcal C$. We will denote by $\varinjlim \mathcal C$ the class of all modules that are direct limits of direct systems consisting of modules from $\mathcal C$.

Let $\kappa$ be an infinite regular cardinal such that each module in $\mathcal C$ is strongly $<\kappa$-presented. Then each $\mathcal C$-filtration $\mathcal M$ of a module $M$ can be enlarged to a family $\mathcal F$ of submodules of $M$ such that $(\mathcal F, \subseteq)$ forms a complete distributive sublattice of the (modular) lattice of all submodules of $M$, and if $N \subseteq P$ are two modules from $\mathcal F$, then $P/N$ is $\mathcal C$-filtered; moreover, for each $N \in \mathcal F$ and each subset $X \subset M$ of cardinality $< \kappa$, there exists $P \in \mathcal F$ such that $N \cup X \subseteq P$ and $P/N$ is $< \kappa$-presented. The family $\mathcal F$ is called the \emph{Hill family} extending the $\mathcal C$-filtration $\mathcal M$, see e.g.\ \cite[7.10]{GT}. 

Let $\mathcal B$ be a class of modules. A module $M$ is \emph{$\mathcal B$-stationary}, provided that $M$ can be expressed as the direct limit of a direct system $\mathcal D$ of finitely presented modules so that for each $B \in \mathcal B$, the inverse system obtained from $\mathcal D$ by applying the contravariant functor $\Hom R{-}{B}$, is Mittag-Leffler (see e.g.\ \cite[\S3]{AH}). That is, for any $M\in\mathcal D$ there exists a homomorphism $f\colon M \to M'$ in $\mathcal D$ such that for any homomorphism $g\colon M' \to B$ to $B$ and any homomorphism $f'\colon M'\to M''$ in $\mathcal D$, the composition $gf\colon M \to B$ factors through the composition $f'f\colon M \to M''$.

We will also need notation for various orthogonal classes of the Ext and Tor bifunctors: 

$\mathcal C ^{\perp} := \mbox{Ker} \Ext 1R{\mathcal C}{-} = \{ M \in \rmod R \mid \Ext 1RCM = 0 \hbox{ for all } C \in \mathcal C \}$, ${}^{\perp} \mathcal C := \mbox{Ker} \Ext 1R{-}{\mathcal C}$, $\mathcal C ^{\perp_\infty} := \bigcap_{0 < i < \omega} \mbox{Ker} \Ext iR{\mathcal C}{-}$, $\mathcal C ^{\intercal} := \mbox{Ker} \Tor 1R{\mathcal C}{-}$, and $^{\intercal} \mathcal D  := \mbox{Ker} \Tor 1R{-}{\mathcal D}$, for a class of left $R$-modules $\mathcal D$. A pair of classes $\mathfrak C = (\mathcal A, \mathcal B)$ such that $\mathcal A = {}^\perp \mathcal B$ and $\mathcal B = \mathcal A ^\perp$ is called a \emph{cotorsion pair}; the class $\mathcal A \cap \mathcal B$ is the \emph{kernel} of $\mathfrak C$. We will use the key fact that given a set $\mathcal S$ of $R$-modules, then the double-orthogonal class ${}^\perp(\mathcal S ^\perp)$ consists precisely of all direct summands of $(\mathcal{S}\cup\{R\})$-filtered $R$-modules, \cite[6.14]{GT}.

\begin{definition}\label{tilt} Let $R$ be a ring and $n < \omega$. A (right $R$-) module $T$ is \emph{$n$-tilting} provided that
\begin{itemize}
\item[\rm{(T1)}] $T$ has projective dimension $\leq n$.
\item[\rm{(T2)}] $\mbox{Ext}^i_R(T,T^{(\kappa)}) = 0$ for all $1 \leq i \leq n$ and all cardinals $\kappa$.
\item[\rm{(T3)}] There exists an exact sequence $0 \to R \to T_0  \to \dots \to T_n \to 0$ where $T_i \in \Add (T)$ for each $i \leq n$.
\end{itemize}

\noindent Here, $\Add (T)$ denotes the class of all direct summands of (not necessarily finite) direct sums of copies of the module $T$. The class $\mathcal B = \{T\}^{\perp_\infty}$ is called the \emph{right $n$-tilting class}, $\mathcal A = {}^{\perp} \mathcal B$ the \emph{left $n$-tilting class}, and the cotorsion pair $(\mathcal A, \mathcal B )$ the \emph{$n$-tilting cotorsion pair} induced by $T$. Moreover, the kernel of this cotorsion pair, $\mathcal A \cap \mathcal B$ equals $\Add (T)$. 

If $T$ and $\tilde T$ are $n$-tilting modules, then $T$ is \emph{equivalent} to $\tilde T$, if  $\{ T \}^{\perp_\infty} = \{ \tilde T \}^{\perp_\infty}$, or equivalently, $\Add (T) = \Add (\tilde T)$. 
\end{definition}
 
The key fact about right $n$-tilting classes is that they are of \emph{finite type}, that is, $\mathcal B = \mathcal S ^{\perp}$ where $\mathcal S$ is a representative set of all strongly finitely presented modules in $\mathcal A$. The latter identity implies that $\mathcal B$ is a definable class, i.e.\ $\mathcal B$ is closed under products, direct limits and pure submodules. Moreover, $T$ and all its syzygies are direct summands of $\mathcal S$-filtered modules, and each module in $\mathcal S$ is a direct summand in a module filtered by $R$, $T$, the first syzygy of $T$, \dots, and the $(n-1)$-th syzygy of $T$, cf.\ \cite[6.14]{GT}. Also, each module $M \in \mathcal A$ is $\mathcal C$-filtered where $\mathcal C$ denotes the class of all strongly countably presented modules from $\mathcal A$, and $\mathcal A \subseteq \varinjlim \mathcal S$. For more details, we refer to \cite[\S13]{GT}. 
  
\medskip
The module theoretic notions above induce the corresponding global notions for quasi-coherent sheaves on schemes as follows:   

\begin{definition}\label{def-tiltqc} Let $n < \omega$ and $\mathcal Q$ be a quasi-coherent sheaf on a scheme $X$. Denote by $\mathcal O_{\!X}$ the structure sheaf of $X$. 

Then $\mathcal Q$ is \emph{locally left $n$-tilting} (\emph{locally kernel $n$-tilting}, \emph{locally $n$-tilting}) provided that for each open affine subset $U$ in $X$ there is an $n$-tilting $\mathcal O_{\!X}\!(U)$-module $T(U)$ such that the $\mathcal O_{\!X}\!(U)$-module of sections $\mathcal Q (U)$ satisfies $\mathcal Q (U) \in \mathcal A _{T(U)}$ ($\mathcal Q (U) \in \Add (T(U))$, and $\mathcal Q (U) = T(U)$, respectively).
\end{definition}

Since $0$-tilting modules coincide with projective generators, a quasi-coherent sheaf $\mathcal Q$ is locally left $0$-tilting, if and only if $\mathcal Q$ is locally kernel $0$-tilting, if and only if $\mathcal Q$ is an (infinite dimensional) vector bundle. In this sense, the notions of locally left $n$-tilting and locally kernel $n$-tilting quasi-coherent sheaves generalize the notion of a vector bundle. Similarly, a quasi-coherent sheaf $\mathcal Q$ is locally $0$-tilting, if and only if $\mathcal Q$ is a vector bundle such that $\mathcal Q (U)$ is a generator of $\rmod {\mathcal O_{\!X}\!(U)}$ for each open affine subset $U$ in $X$.      

\begin{remark} We prefer to use the adjective {\lq}locally{\rq} in Definition \ref{def-tiltqc} to avoid confusing these quasi-coherent sheaves with $n$-tilting objects of the Grothendieck category $\Qcoh{X}$ of all quasi-coherent sheaves on $X$. The latter objects are defined by the analogs of conditions (T1)-(T3) in $\Qcoh{X}$; in particular, they have no non-trivial self-extensions in $\Qcoh{X}$. In contrast, there exist vector bundles over non-affine schemes possessing non-trivial self-extensions. This can apply even to the structure sheaf, e.g.\ if $X$ is a smooth projective curve of genus $g>0$ over a field $k$, then $\Ext 1{\Qcoh{X}}{\mathcal O_{\!X}}{\mathcal O_{\!X}} \cong k^g \ne 0$.
\end{remark} 

The global notion for quasi-coherent sheaves corresponding to a property of $R$-modules $\mathfrak P _R$ is said to be \emph{Zariski local} in case the following holds true: If $X$ is a scheme with the structure sheaf $\mathcal O_{\!X}$, $X = \bigcup_{i \in I} \Spec {A_i}$ is an open affine covering of $X$, and $\mathcal Q$ is a quasi-coherent sheaf on $X$ such that the $A_i$-module of sections $\mathcal Q (\Spec {A_i})$ satisfies $\mathfrak P _{A_i}$ for each $i \in I$, then the $\mathcal O_{\!X}\!(U)$-module of sections $\mathcal Q (U)$ is satisfies $\mathfrak P _{O_{\!X}\!(U)}$ for all open affine subsets $U$ of $X$ (see \cite[\S 3]{EGT} and \cite[\S 5.3]{V}). 

\medskip
As mentioned above, our goal is to prove that the notions of left, kernel, and $n$-tilting quasi-coherent sheaves are Zariski local for all schemes $X$. We start with recalling the well-known relations of the Ext and Tor functors to flat change of rings: 

\begin{lemma}\label{localize} Let $\varphi : R \to S$ be a flat ring homomorphism of commutative rings, and $i < \omega$. 
\begin{enumerate}
\item Assume $A \in \rmod R$ is strongly finitely presented and $B \in \rmod R$. Then there is an isomorphism
$\Ext iRAB \otimes _R S \cong \Ext iS{A\otimes_R S}{B \otimes_R S}.$  
\item For all $A, B \in \rmod R$, there is an isomorphism $\Tor iRAB \otimes _R S \cong \Tor iS{A\otimes_R S}{B \otimes_R S}$.
\item If $A \in \rmod R$ and $B \in \rmod S$, then there are isomorphisms $\Ext iS{A\otimes_R S}B \cong \Ext iRAB$, and
$\Tor iS{A\otimes_R S}B \cong \Tor iRAB$.
\end{enumerate}
\end{lemma}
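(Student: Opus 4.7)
The plan is to derive all three isomorphisms from their degree-zero analogues together with the flatness hypothesis, which lets one move $\otimes_R S$ past taking (co)homology.

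For part (i), I would choose a projective resolution $P_\bullet \to A$ in which every $P_j$ is a finitely generated projective $R$-module; such a resolution exists because $A$ is strongly finitely presented. Applying $\Hom R{-}B$ gives a complex $\Hom R{P_\bullet}B$ whose cohomology in degree $i$ is $\Ext iRAB$. Since $S$ is $R$-flat, tensoring with $S$ commutes with cohomology, so $\Ext iRAB \otimes_R S$ is the cohomology of $\Hom R{P_\bullet}B \otimes_R S$. On the other side, $P_\bullet \otimes_R S$ is a projective resolution of $A \otimes_R S$ over $S$ (flatness again), so $\Ext iS{A \otimes_R S}{B \otimes_R S}$ is the cohomology of $\Hom S{P_\bullet \otimes_R S}{B \otimes_R S}$. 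The two complexes are naturally isomorphic term-by-term via the standard adjunction/evaluation map $\Hom R P B \otimes_R S \to \Hom S{P \otimes_R S}{B \otimes_R S}$, which is an isomorphism whenever $P$ is a finitely generated projective $R$-module (check it for $P = R$ and extend by additivity to finitely generated free modules, then pass to summands). This is the only place where the strong finite presentation of $A$ is used.

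For part (ii), no finiteness hypothesis is needed. Take any projective resolution $P_\bullet \to A$ over $R$. Then $\Tor iRAB$ is the homology of $P_\bullet \otimes_R B$, and, by flatness of $S$, $\Tor iRAB \otimes_R S$ is the homology of $P_\bullet \otimes_R B \otimes_R S$. Again $P_\bullet \otimes_R S$ is a projective resolution of $A \otimes_R S$ over $S$, so $\Tor iS{A \otimes_R S}{B \otimes_R S}$ is the homology of $(P_\bullet \otimes_R S) \otimes_S (B \otimes_R S)$. The natural isomorphism $(P \otimes_R S) \otimes_S (B \otimes_R S) \cong P \otimes_R B \otimes_R S$ is an isomorphism of complexes, giving the claim.

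For part (iii), the degree-zero isomorphisms are the Hom-tensor adjunction $\Hom S{A \otimes_R S}B \cong \Hom RA{B}$ and the associativity $(A \otimes_R S) \otimes_S B \cong A \otimes_R B$ (where on the right $B$ is regarded as an $R$-module via $\varphi$). To derive both sides of each equation, pick a projective resolution $P_\bullet \to A$ over $R$; then $P_\bullet \otimes_R S \to A \otimes_R S$ is a projective resolution over $S$ by flatness of $S$. Computing $\Ext iS{A \otimes_R S}B$ with the latter resolution and using the degree-zero adjunction termwise yields the complex $\Hom R{P_\bullet}B$, whose cohomology is $\Ext iRAB$. The Tor case is analogous, using the associativity isomorphism termwise.

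The only real content is the flatness-and-finite-presentation interplay in (i); parts (ii) and (iii) are essentially formal and the main obstacle, such as it is, is simply being careful that $P_\bullet \otimes_R S$ remains exact and projective over $S$ at each step, which is immediate from flatness of $S$ and the fact that $\otimes_R S$ sends projective $R$-modules to projective $S$-modules.
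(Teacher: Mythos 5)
Your proof is correct and is exactly the standard argument; the paper does not spell out a proof at all but simply cites \cite[3.2.5]{EJ}, \cite[2.1.11]{EJ} and \cite[VII.\S4]{CE}, which contain the same resolution-plus-base-change reasoning you give (with the finite generation of the terms of the resolution used precisely where you use it, to make $\Hom{R}{P}{B}\otimes_R S \to \Hom{S}{P\otimes_R S}{B\otimes_R S}$ an isomorphism).
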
  
\begin{proof} (1) follows e.g.\ by \cite[3.2.5]{EJ}, (2) by \cite[2.1.11]{EJ}, and (3) by \cite[VII.\S4]{CE}.
\end{proof}

\section{Ascent, descent, and tilting}
\label{sec:ad-tilting}

First, we recall that in order to prove Zariski locality, it suffices to check the validity of the assumptions of the following {\lq}Affine Communication Lemma{\rq} \cite[5.3.2.]{V} for the given setting (cf.\ \cite[3.5]{EGT}).  

\begin{lemma}\label{acl} Let $R$ be a commutative ring, $M \in \rmod R$, and $\mathfrak P _R$ be a property of $R$-modules such that
\begin{enumerate} 
\item if $M$ satisfies property $\mathfrak P _R$, then $M[f^{-1}] = M \otimes _R R[f^{-1}]$ satisfies property $\mathfrak P _{R[f^{-1}]}$ for each $f \in R$. 
\item if $R = \sum_{j < m} f_jR$, and the $R[f_j^{-1}]$-modules $M[f_j^{-1}] = M \otimes_R R[f_j^{-1}]$ satisfy property $\mathfrak P _{R[f_j^{-1}]}$ for all $j < m$, then $M$ satisfies property $\mathfrak P _R$.
\end{enumerate}
Then the global notion for quasi-coherent sheaves corresponding to $\mathfrak P _R$ is Zariski local.
\end{lemma}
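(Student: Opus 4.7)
The plan is to follow the standard ``affine communication'' argument as in Vakil \cite[5.3.2]{V}. Fix an open affine covering $X = \bigcup_{i \in I} \Spec{A_i}$ such that each $\mathcal Q(\Spec{A_i})$ satisfies $\mathfrak P _{A_i}$, and an arbitrary open affine $U = \Spec{A} \subseteq X$; the task is to show that the $A$-module $M := \mathcal Q(U)$ satisfies $\mathfrak P _A$.

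The first step is to invoke the standard scheme-theoretic fact that for any point $x \in U \cap \Spec{A_i}$ there exists an open affine $W$ with $x \in W \subseteq U \cap \Spec{A_i}$ which is a \emph{common} distinguished open of both $U$ and $\Spec{A_i}$; that is, $W = \Spec{A[g^{-1}]} = \Spec{A_i[h^{-1}]}$ for suitable $g \in A$ and $h \in A_i$. This is a routine consequence of the fact that principal opens form a basis for the Zariski topology: first pick a basic open of $\Spec{A_i}$ around $x$ contained in $U \cap \Spec{A_i}$, then shrink it to a basic open of $U$, which is automatically principal in $\Spec{A_i}$ as well, since a basic open of a basic open is basic. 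Because $\mathcal Q$ is quasi-coherent, we have the identifications $\mathcal Q(W) \cong \mathcal Q(\Spec{A_i}) \otimes_{A_i} A_i[h^{-1}] \cong M \otimes_A A[g^{-1}]$, and applying hypothesis (1) to the ring $A_i$, the element $h$, and the module $\mathcal Q(\Spec{A_i})$ yields that $M[g^{-1}]$ satisfies $\mathfrak P _{A[g^{-1}]}$.

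Letting $x$ and $i$ vary, the collection of all such $W$'s forms an open cover of the quasi-compact space $U = \Spec{A}$, so one can extract a finite subcover $U = W_1 \cup \cdots \cup W_m$ with $W_j = \Spec{A[g_j^{-1}]}$. The covering condition translates into the algebraic identity $A = \sum_{j<m} g_j A$. By the preceding paragraph each $M[g_j^{-1}]$ satisfies $\mathfrak P _{A[g_j^{-1}]}$, so hypothesis (2) applied to $R = A$, the module $M$, and the elements $g_1,\dots, g_m$ forces $M = \mathcal Q(U)$ to satisfy $\mathfrak P _A$, as desired.

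The argument requires no deep machinery; the only conceptual point worth mentioning is the common-distinguished-open lemma used in the first step. This is essential because $U \cap \Spec{A_i}$ need not be affine, so one cannot invoke condition (1) directly to transfer the property from $\Spec{A_i}$ to the whole intersection. Once one passes to a common basic open, the rest of the proof is a clean interplay between ascent along a single localization (condition (1)), quasi-compactness of affine schemes, and the finite-cover descent (condition (2)).
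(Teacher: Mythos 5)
Your proof is correct and is essentially the standard argument behind the Affine Communication Lemma of Vakil \cite[5.3.2]{V}, which the paper itself only cites rather than proves: locate a common distinguished open around each point of $U\cap\Spec{A_i}$, transfer the property there via hypothesis (i) and quasi-coherence, and conclude by quasi-compactness of $\Spec A$ together with hypothesis (ii). The only point worth stating with a little more care is the common-distinguished-open step (one first shows $\Spec{A[g^{-1}]}$ is distinguished in $\Spec{A_i[h^{-1}]}$ by restricting $g$, and only then uses that a distinguished open of a distinguished open is distinguished), but this is the standard fact you correctly invoke.
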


In other words, the Affine Communication Lemma says that the global notion is Zariski local provided that the property $\mathfrak P _R$ \emph{ascends} along the flat ring epimorphisms $\varphi_f\colon R \to R[f^{-1}]$ for all $f \in R$, and \emph{descends} along the faithfully flat monomorphism $\varphi_{f_0,...,f_{m-1}}\colon R \to \prod_{i < m} R[f_i^{-1}]$ whenever $R = \sum_{j < m} f_jR$. 

If $\mathfrak P _R$ ascends along all flat ring homomorphisms, and descends along all faithfully flat ring homomorphism, then $\mathfrak P _R$ is called an \emph{ad-property} (cf.\ \cite[3.4]{EGT}).

We record several general important properties of faithfully flat ring homomorphisms (for the proof, see e.g.\ \cite[\S 7]{M}):

\begin{lemma}\label{faithful} Let $\varphi\colon R \to S$ be a faithfully flat ring homomorphism of commutative rings. Then $\varphi$ is monic (and hence, w.l.o.g., $\varphi$ is an inclusion). The induced map $\varphi^*\colon \Spec S \to \Spec R$ defined by $\q \mapsto \q \cap R$ is surjective. Each ideal $I$ of $R$ generates an ideal $IS \cong I \otimes _R S$ in $S$, and $IS \cap R = I$.

For each $\p \in \Spec R$, the poset $S_\p = \{ \q \in \Spec S \mid \q \cap R = \p \} \subseteq \Spec S$ is isomorphic to $\Spec{S \otimes _R \kappa(\p)}$, where $\kappa(\p)$ is the residue field of $\p$. The isomorphism takes $\q \in \Spec{S \otimes _R \kappa(\p)}$ to $\{ s \in S \mid s \otimes 1 \in \q \}$.     

If $\p \subseteq \p^\prime \in \Spec R$ and $\q^\prime \in \Spec S$ are such that $\q^\prime \cap R = \p^\prime$, then there exists $\q \in \Spec S$ such that $\q \subseteq \q^\prime$ and $\q \cap R = \p$.    
\end{lemma}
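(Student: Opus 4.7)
The plan is to split the lemma into five separate claims --- injectivity of $\varphi$, the isomorphism $IS \cong I \otimes_R S$, the equality $IS \cap R = I$, surjectivity of $\varphi^*$ together with the fiber description, and the going-down statement --- and to derive each of them from one recurring principle: a faithfully flat base change both reflects vanishing of modules and, by its own flatness, preserves injectivity. It is also convenient to note at the outset that a flat local homomorphism between local rings is automatically faithfully flat, which will be needed for the very last assertion.

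I first verify that $\varphi$ is injective. Writing $K = \ker\varphi$ and tensoring $0 \to K \to R \xrightarrow{\varphi} S$ with ${}_RS$, flatness gives an inclusion $K \otimes_R S \hookrightarrow S$ realised by $s \mapsto 1 \otimes s$; the multiplication map $S \otimes_R S \to S$ is a retraction of the latter, so $K \otimes_R S$ embeds into $S$ as the zero submodule, forcing $K \otimes_R S = 0$ and hence $K = 0$ by faithful flatness. The isomorphism $IS \cong I \otimes_R S$ is then immediate from flatness applied to $0 \to I \to R$. For $IS \cap R = I$, I apply the injectivity argument just established to the base change $R/I \to S/IS$, which is itself faithfully flat over $R/I$ since base change preserves faithful flatness; its kernel equals precisely $(IS \cap R)/I$ and therefore vanishes. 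To see that $\varphi^*$ is surjective at a given $\p \in \Spec R$, I observe that $S \otimes_R \kappa(\p)$ is a nonzero $\kappa(\p)$-algebra --- nonzero by faithful flatness applied to $\kappa(\p) \ne 0$ --- so it has a prime ideal, which pulls back to a prime of $S$ lying over $\p$. The identification $S_\p \cong \Spec{S \otimes_R \kappa(\p)}$ then follows from the standard description of scheme-theoretic fibers: primes of $S \otimes_R \kappa(\p)$ correspond bijectively to those $\q \in \Spec S$ satisfying both $\p S \subseteq \q$ and $\q \cap \varphi(R \setminus \p) = \emptyset$, i.e.\ exactly to the primes with $\q \cap R = \p$, and the prescribed rule $\q \mapsto \{s \in S \mid s \otimes 1 \in \q\}$ inverts this bijection.

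For the going-down assertion, given $\p \subseteq \p'$ and $\q' \in \Spec S$ with $\q' \cap R = \p'$, I localise both rings at the relevant primes to obtain $R_{\p'} \to S_{\q'}$, which is flat as a localisation of a flat extension, and is a local homomorphism because $\p' R_{\p'}$ lands inside $\q' S_{\q'}$; by the opening remark it is therefore faithfully flat. Applying the already established surjectivity of the induced map on spectra to the prime $\p R_{\p'} \in \Spec{R_{\p'}}$, and contracting the resulting prime back along $S \to S_{\q'}$, produces a prime $\q \subseteq \q'$ of $S$ with $\q \cap R = \p$, as required. No single step presents a genuine obstacle, since the whole statement is a standard compendium of properties of faithfully flat maps (the authors themselves refer to Matsumura); the one piece that requires a little bookkeeping is the fiber identification, where one must combine reduction modulo $\p S$ with localisation at $R \setminus \p$ in the correct order to extract the bijection.
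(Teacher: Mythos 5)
Your argument is correct; the paper itself gives no proof of this lemma and simply refers to \cite[\S 7]{M}, and what you write is precisely the standard chain of arguments found there (kernel killed by faithful flatness, $IS\cap R=I$ via the base-changed map $R/I\to S/IS$, fibers of $\varphi^*$ identified with $\Spec{S\otimes_R\kappa(\p)}$, and going-down via the flat local homomorphism $R_{\p'}\to S_{\q'}$, which is automatically faithfully flat). No gaps worth flagging beyond the routine verifications you already acknowledge.
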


\medskip  
Now, we turn to the tilting setting. We will first establish a general lemma on the ascent and descent properties for suitable cotorsion pairs.

\begin{proposition}\label{ascends} Let $\varphi\colon R \to S$ be a flat ring homomorphism of commutative rings and $(\mathcal A, \mathcal B)$ be a cotorsion pair in $\ModR$ induced by a set $\mathcal S$ of strongly finitely presented modules (i.e.\ $\mathcal S ^{\perp} = \mathcal B$). Let us further denote by $(\mathcal A ^\prime, \mathcal B ^\prime)$ the cotorsion pair in $\ModS$ such that $\mathcal B ^\prime = (\mathcal S \otimes_R S)^{\perp}$. Then:

\begin{enumerate}
\item We have $\mathcal B ^\prime = \varphi_*^{-1}(\mathcal B)$, where $\varphi_*\colon\ModS\to\ModR$ is the forgetful functor. In particular, $\mathcal B \otimes _R S \subseteq \mathcal B ^\prime$.
If $\varphi$ is a faithfully flat ring homomorphism, then for each module $N \in \rmod R$, $N \in \mathcal B$, if and only if $N \otimes _R S \in \mathcal B ^\prime$.  
\item We have $\mathcal A \otimes _R S \subseteq \mathcal A ^\prime$. If $\varphi$ is a faithfully flat ring homomorphism, then for each $M \in \rmod R$, $M \in \mathcal A$, if and only if $M \otimes _R S \in \mathcal A ^\prime$. 
\end{enumerate}  
\end{proposition}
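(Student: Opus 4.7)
The plan is to exploit Lemma~\ref{localize}, which transports $\Ext$ between $\rmod R$ and $\rmod S$ along the flat map $\varphi$. Part~(1) and the ascent half of part~(2) follow fairly directly from that lemma, while the faithfully flat descent in part~(2) is the delicate step where I expect the main technical difficulty.

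For part~(1), I would first argue $\mathcal B^\prime = \varphi_*^{-1}(\mathcal B)$ by applying Lemma~\ref{localize}(3) to the pair $(C, N)$ with $C \in \mathcal S$ and $N \in \rmod S$: the isomorphism $\Ext 1 S {C \otimes_R S} N \cong \Ext 1 R C N$ identifies membership in $(\mathcal S \otimes_R S)^\perp$ with membership in $\mathcal S^\perp$ once $N$ is viewed as an $R$-module. The inclusion $\mathcal B \otimes_R S \subseteq \mathcal B^\prime$ then follows from Lemma~\ref{localize}(1): for strongly finitely presented $C \in \mathcal S$ and $B \in \mathcal B$ one has $\Ext 1 R C {B \otimes_R S} \cong \Ext 1 R C B \otimes_R S = 0$, placing $B \otimes_R S$ first in $\mathcal B$ (as an $R$-module) and hence in $\mathcal B^\prime$ by the previous step. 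The faithfully flat direction $N \otimes_R S \in \mathcal B^\prime \Rightarrow N \in \mathcal B$ is the same isomorphism combined with the fact that $\Ext 1 R C N \otimes_R S = 0$ forces $\Ext 1 R C N = 0$ when $\varphi$ is faithfully flat.

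For the ascent in part~(2), I would appeal to the description of $\mathcal A = {}^\perp(\mathcal S^\perp)$ recalled before Definition~\ref{tilt}: every $M \in \mathcal A$ is a direct summand of some $(\mathcal S \cup \{R\})$-filtered module $\tilde M$. Flatness of $\varphi$ implies $- \otimes_R S$ is exact and preserves direct limits, so the filtration of $\tilde M$ transports to a $(\mathcal S \otimes_R S \cup \{S\})$-filtration of $\tilde M \otimes_R S$, of which $M \otimes_R S$ remains a direct summand. The analogous description on the $S$-side then gives $M \otimes_R S \in \mathcal A^\prime$.

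For the descent in part~(2), the main obstacle, I suppose $\varphi$ faithfully flat and $M \otimes_R S \in \mathcal A^\prime$, and I want $\Ext 1 R M B = 0$ for every $B \in \mathcal B$. By part~(1), $B \otimes_R S \in \mathcal B^\prime$, and Lemma~\ref{localize}(3) yields
\[
\Ext 1 R M {B \otimes_R S} \cong \Ext 1 S {M \otimes_R S} {B \otimes_R S} = 0.
\]
I then turn to the pure exact sequence of $R$-modules
\[
0 \to B \to B \otimes_R S \to B^{\prime\prime} \to 0,
\]
pure because faithful flatness makes $R \hookrightarrow S$ a pure monomorphism. Lemma~\ref{localize}(1) applied to $\Ext 2 R C B$, together with the injectivity of $X \to X \otimes_R S$ granted by faithful flatness, shows $B^{\prime\prime} \in \mathcal B$. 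The final bridge from $\Ext 1 R M {B \otimes_R S} = 0$ to $\Ext 1 R M B = 0$ is where I anticipate the real work: the long exact sequence only realises $\Ext 1 R M B$ as a cokernel of $\Hom R M {B \otimes_R S} \to \Hom R M {B^{\prime\prime}}$, whose surjectivity is not automatic. My plan is to first treat pure-injective $B \in \mathcal B$---where the pure embedding $B \hookrightarrow B \otimes_R S$ splits and $\Ext 1 R M B$ becomes a direct summand of the vanishing group---and then to reduce the general case to this via the canonical pure embedding $B \hookrightarrow B^{**}$ into the pure-injective double character module, which itself stays in $\mathcal B$ by Ext--Tor character duality combined with Lemma~\ref{localize}(1).
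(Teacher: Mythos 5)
Parts (1) and the ascent half of (2) are correct and essentially the paper's argument: (1) is exactly the combination of Lemma~\ref{localize}(3) and (1) used in the paper, and the ascent in (2) via transporting $(\mathcal S\cup\{R\})$-filtrations along the exact functor $-\otimes_RS$ is the paper's argument as well.

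The faithfully flat descent in part (2) has a genuine gap, and you have in fact located it yourself before proposing a fix that does not work. Your plan is to deduce $\Ext 1RMB=0$ for all $B\in\mathcal B$ from the vanishing of $\Ext 1RM{B\otimes_RS}$, first for pure-injective $B$ (where $B\to B\otimes_RS$ splits) and then for general $B$ via the pure embedding $B\hookrightarrow B^{**}$. But the reduction step fails for exactly the reason you already flagged for the first sequence: the long exact sequence of $0\to B\to B^{**}\to B^{**}/B\to 0$ only exhibits $\Ext 1RMB$ as a quotient of $\Hom RM{B^{**}/B}$ once $\Ext 1RM{B^{**}}=0$, so vanishing on pure-injectives does not propagate back to $B$. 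More decisively, the implication ``$\Ext 1RM{B}=0$ for all pure-injective $B\in\mathcal B$ $\Rightarrow$ $M\in\mathcal A$'' is simply false: already in the $0$-tilting case $\mathcal S=\{R\}$, $\mathcal B=\rmod R$, $\mathcal A=\ProjR$, the class ${}^\perp(\text{pure-injectives})$ is the class of \emph{flat} modules (since $\Ext 1RM{N^*}\cong(\Tor 1RMN)^*$), so your argument would only recover flatness of $M$, not projectivity. In other words, any purely homological argument of this shape cannot prove the descent, because the descent for $n=0$ is precisely the Raynaud--Gruson theorem.

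The paper closes this gap with a genuinely different and more delicate mechanism. It builds, using the Hill family associated with a $\mathcal C'$-filtration of $M\otimes_RS$, a filtration $(M_\alpha)$ of $M$ by countably generated submodules whose base changes lie in the Hill family, so that each consecutive factor $C_\alpha$ is strongly countably presented and lies in $\varinjlim\mathcal S={}^\intercal(\mathcal S^\intercal)$ (here Tor, unlike Ext, descends cleanly along faithfully flat maps via Lemma~\ref{localize}(2)). Membership of a countably presented module of $\varinjlim\mathcal S$ in $\mathcal A$ is then characterized by $\mathcal B$-stationarity (a relative Mittag-Leffler condition, \cite[9.2(3)]{AH}), and \emph{this} condition does descend along $\varphi$ because the relevant inverse systems of Hom groups are obtained from one another by $-\otimes_RS$. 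The Eklof lemma then assembles $M\in\mathcal A$ from the $C_\alpha\in\mathcal A$. If you want to repair your proof, you will need to import this Mittag-Leffler/stationarity machinery (or an equivalent, such as the Stacks Project treatment of flat descent of projectivity); the pure-injective reduction cannot be made to work.
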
          
\begin{proof} 
(1) By Lemma \ref{localize}(3), for each $B \in \rmod S$, $\Ext 1S{\mathcal S \otimes_R S}B \cong \Ext 1R{\mathcal S}B$, so $B \in \mathcal B ^\prime$, if and only if $\varphi_*(B) \in \mathcal B$. Hence 
$\mathcal B ^\prime = \varphi_*^{-1}(\mathcal B)$. 

In particular, since $S$ is a flat $R$-module, we have $\mathcal B \otimes _R S \subseteq \varinjlim \mathcal B \subseteq \mathcal B$ in $\rmod R$, so $\mathcal B \otimes _R S \subseteq \mathcal B ^\prime$ in $\rmod S$ by the above. 

If $\varphi$ is a faithfully flat ring homomorphism and $N \in \rmod R$, then Lemma \ref{localize}(1) shows that the condition $N \otimes _R S \in \mathcal B ^\prime$ is equivalent to $\Ext iR{\mathcal S}N = 0$ for each $0 < i < \omega$, that is, to $N \in \mathcal B$.  

(2) The class $\mathcal A$ (respectively, $\mathcal A ^\prime$), coincides with the class of all direct summands of $R$-modules ($S$-modules) filtered by the elements of $\mathcal S$ ($\mathcal S \otimes _RS$). Hence $\mathcal A ^\prime$ contains $\mathcal A \otimes _R S$.

Suppose now that $\varphi$ is faithfully flat and let $M \in \rmod R$ be such that $M^\prime = M \otimes _R S \in \mathcal A ^\prime$. Let $\{ m_\alpha \mid \alpha < \lambda \}$ be an $R$-generating subset of $M$, and $\mathcal M$ be a $\mathcal C ^\prime$-filtration of the module $M^\prime$, where $\mathcal C ^\prime$ denotes the class of all strongly countably presented modules from $\mathcal A ^\prime$. Let $\kappa = \aleph_1$ and consider a Hill family $\mathcal F$ of submodules of $M^\prime$ extending $\mathcal M$. 

By induction on $\alpha \leq \lambda$, we define a filtration $( M_\alpha \mid \alpha \leq \lambda )$ of $M$ such that $M_\alpha^\prime = M_\alpha \otimes _R S \in \mathcal F$, $m_\alpha \in M_{\alpha +1}$, $C_\alpha = M_{\alpha + 1}/M_\alpha$ is countably generated, and 
$C_\alpha ^\prime = M_{\alpha + 1}^\prime/M_\alpha^\prime \in \mathcal C ^\prime$ for each $\alpha < \lambda$. 

Let $M_0 = 0$, and $M_\alpha = \bigcup_{\beta < \alpha } M_\beta$ in case $\alpha \leq \lambda$ is a limit ordinal. If $M_\alpha$ has already been defined for some $\alpha < \lambda$, we let $N_0 = M_\alpha ^\prime \in \mathcal F$ and $P_0 = M_\alpha + m_\alpha R$. We take $N_1 \in \mathcal F$ such that $P_0 \otimes _R S \subseteq N_1$ and $N_1/N_0 \in \mathcal C ^\prime$ - this is possible because $\mathcal F$ is a Hill family. There is also $P_0 \subseteq P_1 \subseteq M$ such that $P_1/P_0$ is countably generated and $N_1 \subseteq P_1 \otimes _R S$. Then we take $N_2 \in \mathcal F$ such that $P_1 \otimes _R S \subseteq N_2$ and $N_2/N_1 \in \mathcal C ^\prime$. Proceeding similarly, we obtain two chains: $( P_i \mid i < \omega )$ of submodules of $M$ whose consecutive factors are countably generated, and  $( N_i \mid i < \omega )$ in $\mathcal F$ whose consecutive factors are in $\mathcal C ^\prime$, such that $P_i \otimes _R S \subseteq N_{i+1}$ and $N_i \subseteq P_i \otimes _R S$ for each $i < \omega$. Let $M_{\alpha + 1} = \bigcup_{i < \omega} P_i$. Then $m_\alpha \in M_{\alpha +1}$, $M_{\alpha +1}^\prime = \bigcup_{i < \omega} N_i \in \mathcal F$, $C_\alpha = M_{\alpha + 1}/M_\alpha$ is countably generated, and $C_\alpha ^\prime = M_{\alpha + 1}^\prime/M_\alpha^\prime \in \mathcal C ^\prime$.  

By construction, $C_\alpha ^\prime \in \mathcal C ^\prime$. Since $\varphi$ is faithfully flat, $C_\alpha$ is a strongly countably presented module by \cite[10.82.2]{SP}. Also $\mathcal C ^\prime \subseteq \mathcal A ^\prime \subseteq \varinjlim (\mathcal S \otimes _R S) = {}^\intercal ((\mathcal S \otimes _R S)^\intercal)$ by \cite[8.40]{GT}, whence Lemma \ref{localize}(2) and the faithful flatness of $\varphi$ yield $C_\alpha \in {}^\intercal (\mathcal S ^\intercal) = \varinjlim \mathcal S$.
  
So $C_\alpha = \varinjlim_{i \in I} S_i$ where $S_i \in \mathcal S$ for each $i \in I$. Then $C_\alpha ^\prime = \varinjlim_{i \in I} S_i \otimes _R S$. By \cite[9.2(3)]{AH}, $C_\alpha^\prime$ is $\mathcal B ^\prime$-stationary. In view of (2), this implies that for each $B \in \mathcal B$, the inverse system $\Hom S{S_i \otimes _R S}{B \otimes _R S}$ satisfies the Mittag-Leffler condition. The canonical isomorphism 
$\Hom S{S_i \otimes _R S}{B \otimes _R S} \cong \Hom S{S_i}B \otimes _R S$ from Lemma \ref{localize}(1) and the fact that $\varphi$ is faithfully flat yield that the inverse system $\Hom S{S_i}B$ satisfies the Mittag-Leffler condition. In other words, $C_\alpha$ is $\mathcal B$-stationary. Since $C_\alpha = \varinjlim_{i \in I} S_i$ and $C_\alpha$ is countably presented, \cite[9.2(3)]{AH} yields that $C_\alpha \in \mathcal A$. By the Eklof Lemma \cite[6.2]{GT}, also $M \in \mathcal A$.  
\end{proof}                      

If $T\in\ModR$ is an $n$-tilting module and $(\mathcal A, \mathcal B)$ is the associated cotorsion pair, then the above proposition applies. Indeed, the finite type of tilting classes implies that $\mathcal B$ is of the form $\mathcal S ^\perp$, where $\mathcal S$ is a representative set of all strongly finitely presented modules in $\mathcal{A}$, and the following lemma, which, in particular, refines some results known for the classical localization of tilting modules (cf.\ \cite[\S13.3]{GT}), shows that $(\mathcal A ^\prime, \mathcal B ^\prime)$ is an $n$-tilting cotorsion pair in $\ModS$.

\begin{lemma} \label{tilting ascent}
Let $\varphi\colon R \to S$ be a flat ring homomorphism of commutative rings and $T\in\ModR$ be an $n$-tilting module. Let $(\mathcal A,\mathcal B)$ be the induced $n$-tilting cotorsion pair in $\ModR$ and $\mathcal S$ be a representative set of the strongly finitely presented modules in $\mathcal{A}$.

\begin{enumerate}
\item The $S$-module $T^\prime = T \otimes _R S$ is $n$-tilting.
\item The class $\mathcal B ^\prime = (\mathcal S \otimes_R S)^{\perp}$ is the right tilting class for $T'$.
\end{enumerate}
\end{lemma}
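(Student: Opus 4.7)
The plan is to verify conditions (T1)--(T3) of Definition~\ref{tilt} for the $S$-module $T' = T \otimes_R S$, and then separately to identify the right tilting class it induces with $\mathcal B'$. Conditions (T1) and (T3) will be essentially formal consequences of the flatness of $\varphi$. Tensoring a projective resolution of $T$ of length at most $n$ with $S$ yields, by exactness of $-\otimes_R S$ and the fact that $P\otimes_R S$ is projective over $S$ whenever $P$ is projective over $R$, a projective $S$-resolution of $T'$ of length at most $n$. Likewise, applying $-\otimes_R S$ to the defining exact sequence $0 \to R \to T_0 \to \cdots \to T_n \to 0$ yields $0 \to S \to T_0 \otimes_R S \to \cdots \to T_n \otimes_R S \to 0$, and $T_i \otimes_R S \in \Add(T')$ whenever $T_i \in \Add(T)$.

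The main obstacle is (T2): the vanishing of $\Ext{i}{S}{T'}{(T')^{(\kappa)}}$ for $1 \le i \le n$ and all cardinals $\kappa$. The case $i=1$ would follow by combining the two parts of Proposition~\ref{ascends}. Indeed, $T \in \Add(T) \subseteq \mathcal A$ gives $T' \in \mathcal A'$ by part~(2), while the fact that $\mathcal B$ is definable (hence closed under arbitrary direct sums) gives $T^{(\kappa)} \in \mathcal B$, and therefore $(T')^{(\kappa)} \cong T^{(\kappa)} \otimes_R S \in \mathcal B'$ by part~(1). For higher $i$, I would exploit the heredity of the original tilting cotorsion pair $(\mathcal A, \mathcal B)$: since each $S_0 \in \mathcal S$ lies in $\mathcal A$ and has projective dimension at most $n$, and since $T^{(\kappa)} \in \mathcal B$, we have $\Ext{i}{R}{S_0}{T^{(\kappa)}} = 0$ for all $i \ge 1$. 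As $S_0$ is strongly finitely presented, Lemma~\ref{localize}(1) transfers this vanishing to $\Ext{i}{S}{S_0 \otimes_R S}{(T')^{(\kappa)}} = 0$, placing $(T')^{(\kappa)}$ in $(\mathcal S \otimes_R S)^{\perp_\infty}$. Since $T$ is a direct summand of an $\mathcal S$-filtered module $M$, the $S$-module $T'$ is a direct summand of the $(\mathcal S \otimes_R S)$-filtered module $M \otimes_R S$, and a dimension-shifting/Eklof-type argument (equivalently, the heredity of the derived cotorsion pair $(\mathcal A', \mathcal B')$) propagates the higher $\Ext{i}{S}{-}{-}$ vanishing from $\mathcal S \otimes_R S$ to $T'$.

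For part~(2) I would identify $\mathcal B' = (\mathcal S \otimes_R S)^{\perp}$ with the right $n$-tilting class $\{T'\}^{\perp_\infty}$ induced by $T'$ by checking that both cotorsion pairs have the same left class. The class $\{T'\}^{\perp_\infty}$ is generated by the set $\{S, T', \Omega T', \ldots, \Omega^{n-1} T'\}$, and each of these modules lies in $\mathcal A'$: the $S$-syzygies $\Omega^j T'$ arise by tensoring the $R$-syzygies $\Omega^j T$ with $S$, and the latter, together with $R$, are summands of $\mathcal S$-filtered $R$-modules, so their tensors with $S$ are summands of $(\mathcal S \otimes_R S)$-filtered $S$-modules. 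Conversely, by the structural description of $\mathcal S$ in the preliminaries, each $S_0 \in \mathcal S$ is a summand of an $R$-module filtered by $\{R, T, \Omega T, \ldots, \Omega^{n-1} T\}$, hence $S_0 \otimes_R S$ is a summand of an $S$-module filtered by $\{S, T', \Omega T', \ldots, \Omega^{n-1} T'\}$, which places it in the left class of $\{T'\}^{\perp_\infty}$. Invoking the key fact that the left class of a set-generated cotorsion pair consists precisely of summands of filtered modules, the two left classes coincide, and therefore $\mathcal B' = \{T'\}^{\perp_\infty}$.
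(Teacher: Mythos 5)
Your proof is correct and follows essentially the same route as the paper: exactness of $-\otimes_R S$ handles (T1) and (T3), Lemma~\ref{localize}(1) together with the fact that $T'$ and its syzygies are summands of $(\mathcal S\otimes_RS)$-filtered modules gives (T2), and the mutual filtration relations between $\mathcal S$ and $\{R,T,\Omega T,\dots,\Omega^{n-1}T\}$ identify $(\mathcal S\otimes_RS)^\perp$ with $\{T'\}^{\perp_\infty}$. The only cosmetic differences are that you compare left classes in part~(2) where the paper compares the right orthogonals directly, and you offer Proposition~\ref{ascends} as an alternative for the $i=1$ case of (T2).
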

\begin{proof}
(1) Applying the exact functor $- \otimes _R S$ to a projective resolution of $T$ of length $\leq n$ in $\rmod R$, we obtain a projective resolution of $T^\prime$ in $\rmod S$ of length $\leq n$, so condition (T1) holds for $T^\prime$. Similarly, applying $- \otimes _R S$ to the exact sequence from condition (T3) for $T$, we obtain condition (T3) for $T^\prime$.

By Lemma \ref{localize}(1), for each cardinal $\kappa$, we have $\Ext 1R{\mathcal S}{T^{(\kappa)}} \otimes _R S \cong \Ext 1S{\mathcal S \otimes _R S}{T^{(\kappa)} \otimes _R S}$, whence $\Ext 1S{\mathcal S \otimes _R S}{(T^\prime)^{(\kappa)}} = 0$ by the finite type of $T$. However, $T^\prime$ and all its syzygies are direct summands of $\mathcal S \otimes _R S$-filtered modules, so for each $0 < i \leq n$, $\Ext iS{T^\prime}{(T^\prime)^{(\kappa)}} = 0$, and condition (T2) holds for $T^\prime$. This proves that $T^\prime$ is an $n$-tilting $S$-module.

(2) $\Ext iS{T^\prime}N = 0$ for each $0 < i < \omega$, if and only if $\Ext 1S{\mathcal S \otimes _R S}N = 0$, so $\mathcal B ^\prime = (\mathcal S \otimes _R S) ^{\perp}$ is the right $n$-tilting class induced by $T^\prime$. 
\end{proof}

For the particular case of the $0$-tilting module $T = R$, Proposition~\ref{ascends}(2) shows that the property of being a projective module descends along arbitrary faithfully flat ring homomorphisms. Since projectivity obviously ascends along any flat homomorphism, it is an ad-property. Proposition \ref{ascends} thus implies the classic result of Raynaud and Gruson \cite{RG}: 

\begin{corollary} The property of being a projective module is an ad-property. In particular, the notion of an (infinite dimensional) vector bundle is Zariski local for all schemes.   
\end{corollary}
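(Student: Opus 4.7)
The plan is to identify projectivity with membership in the left class of the $0$-tilting cotorsion pair $(\mathcal{A}, \mathcal{B}) = (\ProjR, \rmod R)$ induced by the trivial $0$-tilting module $T = R$, and then read off both halves of the ad-property directly from the machinery established above. Indeed, the set $\mathcal S = \{R\}$ is a representative set of strongly finitely presented modules in $\mathcal A$, and $\mathcal S^\perp = \rmod R = \mathcal B$, so the hypotheses of Proposition~\ref{ascends} and Lemma~\ref{tilting ascent} are satisfied.

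For the ascent step, given any flat ring homomorphism $\varphi\colon R \to S$, Lemma~\ref{tilting ascent} ensures that $T \otimes_R S = S$ is again a $0$-tilting module over $S$, and the base-changed cotorsion pair is the $0$-tilting cotorsion pair $(\mathrm{Proj}\text{-}S, \rmod S)$; in particular, $P$ projective over $R$ forces $P \otimes_R S$ projective over $S$. For the descent step, assume that $\varphi\colon R \to S$ is faithfully flat and $M \in \rmod R$ satisfies $M \otimes_R S \in \mathrm{Proj}\text{-}S = \mathcal A'$. Then Proposition~\ref{ascends}(2) applied to $(\ProjR, \rmod R)$ yields $M \in \mathcal A = \ProjR$. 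Thus projectivity ascends along flat (indeed arbitrary) ring homomorphisms and descends along faithfully flat ones, which is precisely the definition of an ad-property.

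The Zariski locality of vector bundles is then immediate from the Affine Communication Lemma~\ref{acl}: condition~(1) of the lemma is handled by ascent, since each localization $R \to R[f^{-1}]$ is flat, while condition~(2) is handled by descent, since whenever $R = \sum_{j<m} f_j R$ the canonical morphism $R \to \prod_{j<m} R[f_j^{-1}]$ is faithfully flat by Lemma~\ref{faithful}. There is no genuine obstacle in this corollary itself; the substantive work has already been carried out in Proposition~\ref{ascends}(2), whose proof relied on Hill families, the stationarity characterization of direct limits in~\cite[9.2]{AH}, and the Eklof Lemma. Once that machinery is in hand, the classical Raynaud--Gruson theorem drops out as a degenerate $n=0$ case of our general framework.
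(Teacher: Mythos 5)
Your proposal is correct and follows the same route as the paper: descent is exactly Proposition~\ref{ascends}(2) applied to the $0$-tilting cotorsion pair $(\ProjR,\rmod R)$ generated by $\mathcal S=\{R\}$, ascent is immediate, and Zariski locality follows via Lemma~\ref{acl}. The paper's own argument is just a terser version of the same observation.
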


Moreover, we have

\begin{corollary}\label{acl1} Let $R$ be a commutative ring and $n < \omega$. Then the properties of being an element of a left $n$-tilting class, of the kernel of an $n$-tilting cotorsion pair, and being an $n$-tilting module, ascend along flat ring homomorphisms.
\end{corollary}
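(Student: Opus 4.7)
The plan is to obtain all three ascent statements as a direct assembly of Lemma \ref{tilting ascent} and Proposition \ref{ascends}. Fix a flat ring homomorphism $\varphi\colon R \to S$, an $n$-tilting module $T \in \rmod R$, and the associated $n$-tilting cotorsion pair $(\mathcal A, \mathcal B)$. By the finite type of $n$-tilting classes, $\mathcal B = \mathcal S ^\perp$ where $\mathcal S$ is a representative set of the strongly finitely presented modules in $\mathcal A$, and I will let $(\mathcal A ^\prime, \mathcal B ^\prime)$ denote the cotorsion pair in $\rmod S$ with $\mathcal B ^\prime = (\mathcal S \otimes _R S)^\perp$.

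First, Lemma \ref{tilting ascent} provides the ascent of the $n$-tilting property itself: the module $T ^\prime = T \otimes _R S$ is $n$-tilting in $\rmod S$, and $(\mathcal A ^\prime, \mathcal B ^\prime)$ is precisely the $n$-tilting cotorsion pair it induces. Next, for any $M$ in the left $n$-tilting class $\mathcal A$, Proposition \ref{ascends}(2) yields $M \otimes _R S \in \mathcal A ^\prime$, which is the left $n$-tilting class of $T ^\prime$; this is the ascent of the left $n$-tilting property.

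For the kernel case, I will use that the kernel of $(\mathcal A, \mathcal B)$ equals $\Add (T)$, and likewise the kernel of $(\mathcal A ^\prime, \mathcal B ^\prime)$ equals $\Add (T ^\prime)$. If $M$ is a direct summand of some $T^{(\kappa)}$, then $M \otimes _R S$ is a direct summand of $(T \otimes _R S)^{(\kappa)} = (T ^\prime)^{(\kappa)}$, so $M \otimes _R S \in \Add (T ^\prime)$, as required. There is no substantial obstacle here: all of the work lies in Proposition \ref{ascends} and Lemma \ref{tilting ascent}, and the corollary packages those results together with the elementary observation that $- \otimes _R S$ commutes with arbitrary direct sums and preserves direct summands.
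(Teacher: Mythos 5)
Your proposal is correct, and for the first and third claims it is exactly the paper's argument: the ascent of the $n$-tilting property is Lemma~\ref{tilting ascent}(1), and the ascent of membership in the left class is Proposition~\ref{ascends}(2) combined with Lemma~\ref{tilting ascent}(2) to identify $\mathcal A'$ as the left class of $T'$. For the kernel claim you take a slightly different, more elementary route: the paper deduces $M\otimes_RS\in\mathcal A'\cap\mathcal B'$ from both parts (1) and (2) of Proposition~\ref{ascends}, whereas you bypass the cotorsion-pair machinery entirely by using the identification of the kernel with $\Add(T)$ (respectively $\Add(T')$) and the fact that $-\otimes_RS$ commutes with direct sums and preserves direct summands. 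Both arguments are valid and of comparable length; yours makes the kernel case independent of the descent-flavoured content of Proposition~\ref{ascends}(1), at the cost of leaning on the standard fact $\mathcal A\cap\mathcal B=\Add(T)$ (recorded in Definition~\ref{tilt}) and on Lemma~\ref{tilting ascent} to know that $\Add(T')$ really is the kernel of $(\mathcal A',\mathcal B')$. No gaps.
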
 
\begin{proof} In view of Lemma~\ref{tilting ascent}(2), the first claim follows from part (2) of Proposition~\ref{ascends}, the second from its parts (1) and (2), and the third from part (1) of Lemma~\ref{tilting ascent}.
\end{proof}

\section{Tilting over commutative rings}
\label{sec:commutative-tilting}

We now turn to the structure of right tilting classes over commutative rings. These classes were recently described in terms of the spectrum of the ring \cite{APST,HS}. For this purpose, it is useful to consider a topology on the spectrum which is dual to the usual Zariski topology in the sense of Hochster. The base of closed sets for this topology consists of those sets which are open and quasi-compact in the Zariski topology. The open sets of this dual topology on the spectrum were used in Thomason's generalization of the Neeman-Hopkins classification of thick subcategories of the derived category of perfect complexes, which explains the following terminology:

\begin{definition}\label{defthom} Let $R$ be a commutative ring. A subset $P$ of $\mbox{Spec}(R)$ is called \emph{Thomason} (or \emph{Thomason open}) provided that $P$ is a union of Zariski closed sets with quasi-compact complements. Equivalently, there is a collection $\mathcal{I}$ of finitely generated ideals of $R$ such that $P = \bigcup_{I \in \mathcal{I}} V(I)$. We say that a Thomason set $P$ is \emph{basic}, if $P=V(I)$ for some finitely generated ideal $I$.
\end{definition}

The tilting classes over commutative rings are parametrized by finite filtrations of the spectrum by Thomason sets, satisfying an extra ``grade'' condition. We will call such filtrations characteristic sequences. We first express the ``grade'' condition in terms of avoiding primes associated to minimal cosyzygies of the regular module in a certain way, and then rephrase this condition homologically.

\begin{definition}\label{defvagass} Let $R$ be a commutative ring and $M$ an $R$-module. We say that a prime ideal $\p$ of $R$ is \emph{vaguely associated} to $M$ if $R/\p$ is contained in the smallest subclass of $\rmod R$ containing $M$ and closed under submodules and direct limits. Let $\mbox{VAss}{M}$ denote the set of all primes vaguely associated to $M$.
\end{definition}

\begin{definition}\label{defchar} Let $R$ be a commutative ring. A sequence $\bar P = (P_0,\dots,P_{n-1})$ consisting of Thomason subsets of $\Spec R$ is called \emph{characteristic} (of length $n$) provided that 
		\begin{enumerate}
				\item[(i)] $P_0 \supseteq P_1 \supseteq \dots \supseteq P_{n-1}$,
				\item[(ii)] $P_i \cap \VAss{\Omega^{-i}R} = \emptyset$ for each $i<n$, 
		\end{enumerate}
		where $\Omega^{-i}R$ denotes the $i$-th minimal cosyzygy of $R$.
\end{definition}

The condition $(ii)$ of Definition~\ref{defchar} can be reformulated in terms of Koszul cohomology, or its stable version, which will be crucial in our application. We briefly recall the relevant concepts. Given an element $x$ of a commutative ring $R$, we define the \emph{Koszul complex} $K_\bullet(x)$ with respect to $x$ as a complex
	$$\cdots \rightarrow 0 \rightarrow R \xrightarrow{\cdot x} R \rightarrow 0 \rightarrow \cdots$$
	concentrated in homological degrees 1 and 0. Given a sequence $x_1,x_2,\ldots,x_n$ of elements $R$, we define $K_\bullet(x_1,x_2,\ldots,x_n)$ to be the tensor product of complexes $K_\bullet(x_1) \otimes_R K_\bullet(x_2) \otimes_R \cdots \otimes_R K_\bullet(x_n)$. If $I$ is a finitely generated ideal of $R$, we fix once for ever a set of generators $x_1,x_2,\ldots,x_n \in R$ of $I$. For a module $M$, we define the $i$-th \emph{Koszul cohomology} of the ideal $I$ with respect to $M$ as follows:
	$$H^i_R(I;M)=H^i\Hom{R}{K_\bullet(x_1,x_2,\ldots,x_n)}{M}.$$
	We need to alert immediately that we abuse the notation to some extent. The Koszul cohomology is not invariant under the choice of generators of $I$. However, the vanishing of the cohomology up to any degree does not depend on this choice (see \cite[Proposition 3.4]{HS}), which justifies our use of this notation.
	
The issue with the ambiguity of the Koszul complex can be also mended at the cost of stepping outside of perfect complexes. We define the \emph{\v{C}ech complex} (also called the \emph{stable Koszul complex}) $\check{C}^\bullet(x)$ with respect to an element $x \in R$ to be the cochain complex
	$$\cdots \rightarrow 0 \rightarrow R \xrightarrow{\iota} R[x^{-1}] \rightarrow 0 \rightarrow \cdots,$$
	concentrated in cohomological degrees 0 and 1, where $R[x^{-1}]$ is the localization of the ring $R$ at the element $x$, and $\iota$ is the natural map. The term `stable Koszul complex' comes from the fact that
\[ \check{C}^\bullet(x_i) \cong \varinjlim\nolimits_{n\ge 1} \Hom{R}{K_\bullet(x^n)}{R} \cong \varinjlim\nolimits_{n\ge 1} K_\bullet(x^n)[-1]. \]
Given a sequence $x_1,x_2,\ldots,x_n$, we define the \v{C}ech complex $\check{C}^\bullet(x_1,x_2,\ldots,x_n)=\bigotimes_{i=1}^n \check{C}^\bullet(x_i)$. If $x_1,x_2,\ldots,x_n$ and $y_1,y_2,\ldots,y_m$ are two sets of generators of the same ideal $I$, the two associated \v{C}ech complexes are quasi-isomorphic (see \cite[Corollary 3.12]{Gr}). This justifies the following notation---given a finitely generated ideal $I$, we let $\check{C}^\bullet(I)$ be the \v{C}ech complex on some chosen finite generating set of $I$. Then $\check{C}^\bullet(I)$ is a well-defined object of the derived category of $R$. The \emph{\v{C}ech cohomology} of the ideal $I$ with respect to a module $M$ is then well-defined as follows:
	$$\check{H}^i_R(I;M)=H^i(\check{C}^\bullet(I) \otimes_R M).$$
In fact, for commutative noetherian rings $R$, or more generally for weakly proregular finitely generated ideals over arbitrary commutative rings, the \v{C}ech cohomology coincides with the local cohomology at $I$ (see~\cite[Theorem 3.2]{Sch}).

	Condition (ii) in Definition~\ref{defchar} can now be restated in various homological terms.
	
\begin{lemma}\emph(\cite[Theorem 3.14 and Lemma 7.4]{HS})\label{gradelemma}
		Let $R$ be a commutative ring, $P$ a Thomason subset of $\Spec{R}$, and $M$ an $R$-module. Let $\mathcal{I}$ be a set of finitely generated ideals such that $P=\bigcup_{I \in \mathcal{I}}V(I)$. Then the following conditions are equivalent for $n > 0$:
		\begin{enumerate}
				\item $P \cap \VAss{\Omega^{-i}M} = \emptyset$ for all $i=0,1,\ldots,n-1$,
				\item $\Ext{i}{R}{R/I}{M}=0$ for all $I \in \mathcal{I}$ and $i=0,1,\ldots,n-1$,
				\item $H^i_R(I;M)=0$ for all $I \in \mathcal{I}$ and $i=0,1,\ldots,n-1$,
				\item $\check{H}^i_R(I;M)=0$ for all $I \in \mathcal{I}$ and $i=0,1,\ldots,n-1$,
		\end{enumerate}
\end{lemma}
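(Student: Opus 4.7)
The plan is to treat the four conditions one finitely generated ideal $I \in \mathcal{I}$ at a time, since each condition is universal over $\mathcal{I}$. The task then splits into the purely homological equivalences $(2) \Leftrightarrow (3) \Leftrightarrow (4)$, which are classical grade-theoretic facts for a single f.g.\ ideal, and the geometric equivalence $(1) \Leftrightarrow (2)$, which requires a careful analysis of vaguely associated primes.

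For $(2) \Leftrightarrow (3)$ I would invoke the classical coincidence of Ext-grade and Koszul-grade for a f.g.\ ideal, namely
\[ \min\{i : \Ext{i}{R}{R/I}{M} \neq 0\} \;=\; \min\{i : H^i_R(I;M) \neq 0\}. \]
A direct proof proceeds by induction on the number $k$ of generators of $I = (x_1,\ldots,x_k)$, using the recursion $K_\bullet(x_1,\ldots,x_k) \cong K_\bullet(x_1,\ldots,x_{k-1}) \otimes_R K_\bullet(x_k)$ to compare the long exact sequence in Koszul cohomology induced by the second tensor factor with the Ext long exact sequence coming from a free resolution of $R/I$. For $(3) \Leftrightarrow (4)$, I would use the multi-variable extension of the colimit formula already stated in the text, namely
\[ \check{C}^\bullet(I) \otimes_R M \;\simeq\; \varinjlim\nolimits_j \Hom{R}{K_\bullet(x_1^j,\ldots,x_k^j)}{M}[-k], \]
which yields $\check{H}^i_R(I;M) = \varinjlim_j H^i_R((x_1^j,\ldots,x_k^j);M)$. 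Combined with the invariance of Koszul-cohomology vanishing under change of finite generating set quoted from \cite[Proposition 3.4]{HS}, vanishing in degrees less than $n$ for the generators $(x_1,\ldots,x_k)$ transfers to every tuple of powers, giving (4); conversely, the cofinality of the system of powers among finite generating sets of $I$ means that vanishing of the colimit forces vanishing of each individual term, hence (3).

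For $(1) \Leftrightarrow (2)$ I would induct on $n$, with the base case $n=1$ reducing, after unpacking the Thomason-set decomposition $P = \bigcup_{I \in \mathcal{I}} V(I)$, to the pointwise statement $V(I) \cap \VAss{M} = \emptyset \Leftrightarrow \Hom{R}{R/I}{M} = 0$ for each f.g.\ ideal $I$. In the noetherian case this is classical via the identification $\VAss{M} = \operatorname{Ass}(M)$: any $m \in M$ with $Im = 0$ gives an annihilator $J \supseteq I$ containing some prime in $\operatorname{Ass}(M)$, and conversely any $\p \in \operatorname{Ass}(M) \cap V(I)$ furnishes some $m \in M$ with annihilator $\p \supseteq I$, so $\Hom{R}{R/I}{M} \neq 0$. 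The non-noetherian case requires the characterization from \cite[\S 3]{HS} matching the f.g.\ annihilators of elements of $M$ with the primes vaguely associated to $M$. For the inductive step I would use $\Ext{i}{R}{R/I}{M} \cong \Hom{R}{R/I}{\Omega^{-i}M}$ whenever $\Ext{j}{R}{R/I}{M} = 0$ for all $j < i$, so that the base case applied in turn to the cosyzygies $\Omega^{-1}M, \ldots, \Omega^{-(n-1)}M$ closes the induction.

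The main obstacle is the non-noetherian instance of the base case of $(1) \Leftrightarrow (2)$: outside the noetherian setting the link between f.g.\ annihilators of elements of $M$ and primes vaguely associated to $M$ is delicate, and is the technical heart of \cite[\S 3, \S 7]{HS}, which I would cite rather than reprove in full.
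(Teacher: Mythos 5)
This lemma is not proved in the paper at all --- it is quoted directly from \cite[Theorem 3.14 and Lemma 7.4]{HS} --- so your proposal can only be measured against the strategy of the cited source. Your architecture (work one ideal $I$ at a time; homological equivalences $(2)\Leftrightarrow(3)\Leftrightarrow(4)$; equivalence $(1)\Leftrightarrow(2)$ via dimension shifting along the minimal injective resolution) does match [HS]. The $(1)\Leftrightarrow(2)$ part is essentially sound: the isomorphism $\Ext{i}{R}{R/I}{M}\cong\Hom{R}{R/I}{\Omega^{-i}M}$ under vanishing in lower degrees is correct, the point being that minimality forces $\Hom{R}{R/I}{E(\Omega^{-j}M)}=0$ once $\Hom{R}{R/I}{\Omega^{-j}M}=0$ (a nonzero element of $E(\Omega^{-j}M)$ killed by $I$ would meet the essential submodule $\Omega^{-j}M$); and deferring the non-noetherian base case to [HS] is consistent with what the paper itself does.

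The genuine gap is in your argument for $(4)\Rightarrow(3)$. The claim that ``vanishing of the colimit forces vanishing of each individual term'' is false for directed colimits, and cofinality is irrelevant: a directed system of nonzero modules can have zero colimit (e.g.\ $\mathbb{Z}/2\xrightarrow{0}\mathbb{Z}/2\xrightarrow{0}\cdots$). After identifying $\check{H}^i_R(I;M)\cong\varinjlim_j H^i_R((x_1^j,\dots,x_k^j);M)$, the real content of $(4)\Rightarrow(3)$ is that in the \emph{lowest} degree $g$ where some Koszul cohomology is nonzero, the transition maps $H^g(K_\bullet(\underline{x}^j);M)\to H^g(K_\bullet(\underline{x}^{j+1});M)$ are injective, so the colimit cannot vanish; this ``depth sensitivity'' lemma is exactly what [HS] (following Greenlees) supplies and what your sketch omits. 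Two smaller points in the same step: $(x_1^j,\dots,x_k^j)$ generates a \emph{different} ideal from $I$ (only the radicals, hence the sets $V(-)$, coincide), so the generating-set invariance of \cite[Proposition 3.4]{HS} does not literally apply --- you need radical-invariance of Koszul grade, which is itself part of what is being proved; and your displayed formula carries a spurious shift, since $\check{C}^\bullet(I)\otimes_RM\simeq\varinjlim_j\Hom{R}{K_\bullet(x_1^j,\dots,x_k^j)}{M}$ with no $[-k]$ (the shift appears only when one replaces $\Hom{R}{K_\bullet}{-}$ by $K_\bullet\otimes_R-$ via Koszul self-duality). Finally, the asserted coincidence of Ext-grade and Koszul-grade in $(2)\Leftrightarrow(3)$ is a theorem, but for non-noetherian rings it is one of the harder parts of \cite[Theorem 3.14]{HS} rather than a routine induction on the number of generators; as written, your comparison of the two long exact sequences does not obviously close.
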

	One advantage of using Koszul cohomology (or its stable version) is its good behavior with respect to flat base change (unlike $\Ext{i}{R}{R/I}{-}$, which need not commute with direct limits for $i>0$).
\begin{lemma}\label{koszul-faith}
	Let $R$ be a commutative ring and $\varphi\colon R \rightarrow S$ a flat ring homomorphism. For any finitely generated ideal $I$ and any $R$-module $M$, we have $H_S^i(IS;M \otimes_R S)\cong H_R^i(I;M)\otimes_RS$. If, in particular, $\varphi$ is faithfully flat, we have $H_R^i(I;M)=0$ if and only if $H_S^i(IS;M \otimes_R S)=0$ for any $i$.
\end{lemma}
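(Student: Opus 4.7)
The plan is to reduce everything to the observation that the Koszul complex is a bounded complex of finitely generated free modules, so that every step in sight commutes with flat base change. Fix a generating set $x_1,\dots,x_n$ of $I$ and set $K = K_\bullet(x_1,\dots,x_n)$. The images $\varphi(x_1),\dots,\varphi(x_n)$ generate $IS$, and with this choice of generators there is a canonical isomorphism of $S$-complexes $K \otimes_R S \cong K_\bullet(\varphi(x_1),\dots,\varphi(x_n))$, which is immediate from the explicit description of $K$ as an $n$-fold tensor product of two-term complexes.

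Next I would handle the Hom-tensor interchange. For a finitely generated free $R$-module $F \cong R^m$, the natural evaluation-like map
$$\Hom{R}{F}{M} \otimes_R S \longrightarrow \Hom{S}{F \otimes_R S}{M \otimes_R S}$$
is an isomorphism: this is trivial for $F = R$ and extends to finite direct sums on both variables. Since each term of $K$ is finitely generated and free, applying this termwise gives an isomorphism of $S$-complexes $\Hom{R}{K}{M} \otimes_R S \cong \Hom{S}{K \otimes_R S}{M \otimes_R S}$.

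Finally, since $\varphi$ is flat the functor $-\otimes_R S$ is exact, hence commutes with taking cohomology of any complex of $R$-modules. Chaining this with the previous isomorphism and the base change of $K$ yields
$$H_R^i(I;M) \otimes_R S \;\cong\; H^i\bigl(\Hom{R}{K}{M} \otimes_R S\bigr) \;\cong\; H^i\bigl(\Hom{S}{K \otimes_R S}{M \otimes_R S}\bigr) \;=\; H_S^i(IS; M \otimes_R S),$$
which is the first claim. For the second claim, faithful flatness of $\varphi$ says that an $R$-module $N$ vanishes if and only if $N \otimes_R S$ does; applying this to $N = H_R^i(I;M)$ and combining with the first part gives the equivalence.

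The argument is essentially routine, so there is no real obstacle of substance; the only mild subtlety is bookkeeping of generators. Both $H_R^i(I;-)$ and $H_S^i(IS;-)$ depend a priori on a choice of generating set, but taking $\varphi(x_j)$ as generators of $IS$ makes the two choices compatible and promotes the comparison to an actual isomorphism of complexes rather than a mere quasi-isomorphism, after which the vanishing statement of the second assertion is insensitive to the generating set thanks to the result recalled just before Lemma~\ref{gradelemma}.
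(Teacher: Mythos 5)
Your proof is correct and follows essentially the same route as the paper's: base-change the Koszul complex along $\varphi$, use the Hom--tensor interchange for a bounded complex of finitely generated free modules, and invoke exactness of $-\otimes_R S$ to pass through cohomology, with faithful flatness giving the vanishing equivalence. The paper compresses the middle step into the remark that $K_\bullet(I)$ is perfect; your explicit treatment of the generator bookkeeping matches the paper's convention of choosing the images $\varphi(x_j)$ as generators of $IS$.
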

\begin{proof}
	Given a generating set $x_1,x_2,\ldots,x_n$ of $I$, it is easy to see that the complex $K_\bullet(x_1,x_2,\ldots,x_n) \otimes_R S$ is isomorphic to the Koszul complex over the ring $S$ on the generators $x_1 \otimes_R 1,x_2 \otimes_R 1,\ldots,x_n \otimes_R 1$. Since taking cohomology commutes with exact functors and $K_\bullet(I)$ is a perfect complex, we infer that 
		$$H^i\Hom{R}{K_\bullet(x_1,x_2,\ldots,x_n)}{M} \otimes_R S \cong H^i\Hom{S}{K_\bullet(x_1,x_2,\ldots,x_n) \otimes_R S}{M \otimes_R S}.$$
		Using our slightly abused notation, we can write $H_R^i(I;M) \otimes_R S \cong H_S^i(IS;M\otimes_RS)$. If $S$ is faithfully flat, the equivalence of vanishing of the cohomologies in the statement follows at once.
\end{proof}
 
The parametrization of right $n$-tilting classes over commutative rings by characteristic sequences was proved for the noetherian setting in \cite{APST}, and then generalized in \cite{HS}. Here, we use a version of the characterization employing the Tor functor, because this functor behaves very well w.r.t.\ flat base change (see Lemma \ref{localize}):  

\begin{theorem}\label{comnoe}\emph{\cite[Theorem 6.2]{HS}} Let $R$ be a commutative ring and $n < \omega$. 
		The right $n$-tilting classes in $\rmod R$ are parametrized by characteristic sequences of length $n$ in $\Spec R$: the class $\mathcal T _{\bar P}$ corresponding to a characteristic sequence $\bar P = (P_0,\dots,P_{n-1})$, where $P_i=\bigcup_{I \in \mathcal{I}_i}V(I)$ for a collection of finitely generated ideals $\mathcal{I}_i$ for each $i<n$, is defined by the formula
				$$\mathcal T _{\bar P} = \{ M \in \rmod R \mid \Tor iRM{R/I} = 0 \mbox{ for all } i < n \mbox{ and } I \in \mathcal{I}_i \}.$$
\end{theorem}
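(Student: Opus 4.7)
The plan is to derive this statement directly from \cite[Theorem 6.2]{HS}, which establishes the same parameterization of right $n$-tilting classes by characteristic sequences; the only work needed here is to confirm that the Tor-based description $\mathcal T_{\bar P}$ coincides with the description of the tilting class constructed in HS. The Tor form is preferred because of its excellent behavior under flat base change (cf.\ Lemma \ref{localize}(2)), which will be crucial in the subsequent sections.

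First, I would invoke \cite[Theorem 6.2]{HS} to obtain the bijection between characteristic sequences of length $n$ in $\Spec R$ and right $n$-tilting classes, together with an explicit description of each class. The description in HS is cohomological, characterizing membership via vanishing of Ext groups, Koszul cohomology, or \v{C}ech cohomology (as summarized in Lemma \ref{gradelemma}) at finitely generated ideals representing each Thomason set $P_i$.

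Second, I would translate this cohomological characterization into the Tor form of the present statement. For each fixed $i<n$ and each $I \in \mathcal{I}_i$, the equivalence between the vanishing of $\Tor iRM{R/I}$ and the vanishing of Koszul or \v{C}ech cohomology for $I$ with respect to $M$ rests on comparing the Koszul complex $K_\bullet(x_1,\ldots,x_k)$ on a generating set of $I$ with a projective resolution of $R/I$: the Koszul complex is a bounded complex of finitely generated free modules with $H_0(K_\bullet)=R/I$, so tensoring with $M$ yields natural comparison maps between $H_j(K_\bullet \otimes_R M)$ and $\Tor jR{R/I}M$. The nesting $P_0\supseteq P_1\supseteq\cdots\supseteq P_{n-1}$ in Definition~\ref{defchar} ensures that these comparisons can be carried out consistently across all degrees $i<n$, while the grade condition (ii) of that definition controls boundary terms.

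The main obstacle is precisely this degree-by-degree matching between Tor at the quotient modules $R/I$ and the cohomological invariants used in \cite{HS}, further complicated by the fact that the set of test ideals $\mathcal{I}_i$ varies with the level $i$. Once this homological bookkeeping is completed, the bijection claimed in the theorem and the explicit description of $\mathcal T_{\bar P}$ follow immediately from \cite[Theorem 6.2]{HS}.
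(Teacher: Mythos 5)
The paper offers no proof of this theorem at all: it is imported verbatim as \cite[Theorem 6.2]{HS}, the Tor-based formula being one of the equivalent characterizations established there, so your core move --- citing that theorem --- is exactly the paper's approach. One caveat about your translation sketch: for a general finitely generated ideal $I$ the Koszul complex on a chosen generating set is \emph{not} a projective resolution of $R/I$ (that would require the generators to form a regular sequence), so there are no natural degree-by-degree comparison maps between $H_j(K_\bullet(x_1,\dots,x_k)\otimes_R M)$ and $\Tor jR{R/I}M$; what actually holds, and what \cite{HS} uses (compare the dual statement in Lemma~\ref{gradelemma}), is that the \emph{simultaneous} vanishing of these invariants in all degrees up to a given bound are equivalent conditions. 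Since that equivalence of descriptions is already contained in the cited theorem, this inaccuracy does not undermine your proposal, but the "degree-by-degree matching" you describe is not how the identification works.
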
 
	Using Koszul complexes, we can also compute the representative set $\mathcal{S}$ of strongly finitely presented modules associated to the tilting class. Let $I$ be a finitely generated ideal with a fixed generating set $x_1,x_2,\ldots,x_n$. Given any $i>0$, we denote by $S_{I,i}$ the cokernel of the map $\Hom{R}{d_i}{R}$, where
		$$\cdots \rightarrow F_n \xrightarrow{d_n}  F_{n-1} \xrightarrow{d_{n-1}} \cdots \xrightarrow{d_2} F_1 \xrightarrow{d_1} F_0 \rightarrow 0$$
		is the Koszul complex $K_\bullet(x_1,x_2,\ldots,x_n)$. Note that the module $S_{I,i}$ is a strongly finitely presented module of projective dimension $i$ whenever $H^j_R(I;R)=0$ for all $0\leq j<i$ or, equivalently by Lemma~\ref{gradelemma}, $\Ext{j}{R}{R/I}{R}$ for all $0\leq j<i$ (see also \cite[Proposition 5.12]{HS}).
		
\begin{lemma}\label{resolving}\emph{\cite[Theorem 6.2]{HS}}
		Let $\mathcal{T}_{\bar P}$ be the tilting class corresponding to the characteristic sequence $\bar{P}=(P_0,P_1,\ldots,P_{n-1})$ in the sense of Theorem~\ref{comnoe}. Then $\mathcal{T}_{\bar P}=\mathcal{S}^\perp$, where 
		$$\mathcal{S}=\{S_{I,i+1} \mid i<n, \text{ $I$ a finitely generated ideal such that } V(I) \subseteq P_i\}.$$
\end{lemma}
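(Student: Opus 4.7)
This is part of \cite[Theorem 6.2]{HS}, so my plan is to extract and assemble the relevant pieces of that theorem, organized around the two inclusions $\mathcal{T}_{\bar P} \subseteq \mathcal{S}^\perp$ and $\mathcal{S}^\perp \subseteq \mathcal{T}_{\bar P}$. As a preliminary step I would verify that each $S_{I,i+1} \in \mathcal{S}$ is strongly finitely presented of projective dimension at most $i+1$: from $V(I) \subseteq P_i \subseteq P_j$ for all $j \leq i$, the characteristic sequence condition $P_j \cap \VAss{\Omega^{-j}R} = \emptyset$, and Lemma~\ref{gradelemma} applied to $M = R$, one obtains $\Ext{j}{R}{R/I}{R} = H^j_R(I;R) = 0$ for $j = 0, 1, \ldots, i$. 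As indicated in the discussion preceding the statement, this vanishing promotes the truncated dual Koszul complex into a finite free resolution
\[ 0 \to F_0^* \to F_1^* \to \cdots \to F_{i+1}^* \to S_{I,i+1} \to 0. \]

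For $\mathcal{T}_{\bar P} \subseteq \mathcal{S}^\perp$, I would take $M \in \mathcal{T}_{\bar P}$ and apply $\Hom{R}{-}{M}$ to the above resolution. Using the identification $\Hom{R}{F_k^*}{M} \cong F_k \otimes_R M$ (valid since the $F_k$ are finitely generated free), the resulting cochain complex is recognized as a truncation of the Koszul complex $K_\bullet(I) \otimes_R M$, so that $\Ext{1}{R}{S_{I,i+1}}{M}$ identifies with a specific Koszul homology group of $I$ with coefficients in $M$. The next step is to show, using Koszul self-duality together with the preliminary vanishings, and passing through the chain of equivalences in Lemma~\ref{gradelemma} (possibly via the \v{C}ech complex), that this Koszul invariant matches $\Tor{i}{R}{R/I}{M}$; the latter vanishes by the Tor description of $\mathcal{T}_{\bar P}$ in Theorem~\ref{comnoe}, since $V(I) \subseteq P_i$. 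The reverse inclusion $\mathcal{S}^\perp \subseteq \mathcal{T}_{\bar P}$ is then obtained by reading the same chain of identifications backwards: the assumed vanishings of $\Ext{1}{R}{S_{I,i+1}}{M}$ across the full range $i < n$, $V(I) \subseteq P_i$, force exactly the Tor vanishings characterizing $\mathcal{T}_{\bar P}$.

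The main obstacle is the middle identification of $\Ext{1}{R}{S_{I,i+1}}{M}$ with $\Tor{i}{R}{R/I}{M}$. Since the fixed generators of $I$ need not form a regular sequence, the Koszul complex $K_\bullet(I)$ need not be a resolution of $R/I$, so this identification is not automatic and must genuinely exploit the characteristic sequence condition. This is the technical heart of \cite[Theorem 6.2]{HS}.
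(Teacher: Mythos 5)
The paper gives no proof of this lemma at all---it is quoted verbatim from \cite[Theorem 6.2]{HS}---so there is no internal argument to compare yours against; I can only assess your outline on its own terms. Your preliminary step is correct and matches the discussion preceding the statement: $V(I)\subseteq P_i\subseteq P_j$ for $j\le i$ together with Lemma~\ref{gradelemma} gives $H^j_R(I;R)=0$ for $j=0,\dots,i$, so the truncated dual Koszul complex is a finite free resolution of $S_{I,i+1}$, and applying $\Hom{R}{-}{M}$ and using $\Hom{R}{F_k^*}{M}\cong F_k\otimes_RM$ identifies $\Ext{1}{R}{S_{I,i+1}}{M}$ with the Koszul homology $H_i(K_\bullet(I)\otimes_RM)$.

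The genuine gap is the bridge you propose at what you call the technical heart: $H_i(K_\bullet(I)\otimes_RM)$ and $\Tor{i}{R}{M}{R/I}$ are \emph{not} isomorphic in general, and no amount of exploiting the characteristic sequence condition will make them so. That condition only controls the Koszul homology of $R$ itself in degrees $\ge m-i$ (where $m$ is the number of chosen generators), so $K_\bullet(I)$ still fails to be a resolution of $R/I$, and the two functors genuinely differ. What is true, and what the argument actually needs, is an equivalence of vanishing over the whole range of degrees $j\le i$: since $\Tor{j}{R}{M}{R/I}^+\cong\Ext{j}{R}{R/I}{M^+}$ and $\Hom{R}{K_\bullet(I)}{M^+}\cong(K_\bullet(I)\otimes_RM)^+$ by Hom-tensor adjunction, applying Lemma~\ref{gradelemma} to the character module $M^+$ shows that $\Tor{j}{R}{M}{R/I}=0$ for all $j\le i$ if and only if $H_j(K_\bullet(I)\otimes_RM)=0$ for all $j\le i$. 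This range formulation is compatible with your setup because $V(I)\subseteq P_i$ forces $S_{I,j+1}\in\mathcal S$ for every $j\le i$, so the full range of Koszul vanishings is available on the $\mathcal S^\perp$ side. A second point you leave unaddressed is that $\mathcal T_{\bar P}$ in Theorem~\ref{comnoe} is defined by Tor-vanishing only for $I$ in the fixed families $\mathcal I_i$, whereas $\mathcal S$ ranges over \emph{all} finitely generated ideals with $V(I)\subseteq P_i$; one must know that the Tor-vanishing propagates from $\mathcal I_i$ to all such ideals, which is again nontrivial content of \cite[Theorem 6.2]{HS}. With these two repairs your outline does assemble into the intended proof, but as written the degree-by-degree identification would fail.
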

	Theorem \ref{comnoe} suggests an investigation of characteristic sequences under the base changes induced by (faithfully) flat ring homomorphisms:

\begin{lemma} \label{partic2} Let $R$ be a commutative ring and $n < \omega$. Let $\varphi: R \to S$ be a flat ring morphism and $\varphi^*\colon \Spec{S}\to\Spec{R}$ the induced map of the spectra. For each characteristic sequence of length $n$ in $\Spec R$, $\bar P = (P_0,\dots,P_{n-1})$, let $\bar Q _{\bar P} = (Q_0,\dots,Q_{n-1})$, where $Q_i = (\varphi^*)^{-1}(P_i) = \{ \q \in \Spec S \mid (\exists \p \in P_i)(\p S \subseteq \q) \}$ for each $i < n$.

\begin{enumerate}
\item The sequence $\bar Q _{\bar P}$ is characteristic of length $n$ in $\Spec S$.
\item If $T$ is an $n$-tilting module inducing the right $n$-tilting class $\mathcal B = \mathcal T _{\bar P}$ in $\rmod R$, then $T^\prime = T \otimes _R S$ is an $n$-tilting $S$-module inducing the right $n$-tilting class $\mathcal B ^\prime = \mathcal T _{\bar Q _{\bar P}}$ in $\rmod S$. 
\end{enumerate}

If, moreover, $\varphi$ is faithfully flat, then the assignment $\bar P \mapsto \bar Q _{\bar P}$ is monic.
\end{lemma}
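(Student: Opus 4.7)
The plan is to establish the three claims in sequence, with the grade condition in (i) the only place where genuine work is needed. The main obstacle is that the natural attempt to translate the $\VAss$-condition via $\Ext{i}{R}{R/I}{-}$ does not commute with flat base change for $i > 0$ (since $R/I$ is only finitely presented, not strongly so), and the remedy is to route everything through Koszul cohomology by means of Lemmas~\ref{gradelemma} and~\ref{koszul-faith}.

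For \emph{part (i)}, I first dispose of the easy items: the identity $(\varphi^*)^{-1}(V(I)) = V(IS)$, together with the fact that $IS$ is again finitely generated, presents each $Q_i$ as a Thomason set, and the descending chain $Q_0 \supseteq \dots \supseteq Q_{n-1}$ is inherited from $\bar P$ by functoriality of preimages. The heart of the argument is the grade condition. Since $P_i \subseteq P_j$ whenever $j \le i$, the characteristic property of $\bar P$ in fact forces $P_i \cap \VAss{\Omega^{-j}R} = \emptyset$ simultaneously for all such $j$, and Lemma~\ref{gradelemma} rephrases this as $H^j_R(I;R) = 0$ for every $I \in \mathcal{I}_i$ and $j \le i$. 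Lemma~\ref{koszul-faith} transports the vanishing to $H^j_S(IS;S) = 0$, and a second application of Lemma~\ref{gradelemma} in $\Spec S$ yields $Q_i \cap \VAss{\Omega^{-j}S} = \emptyset$ for $j \le i$, in particular for $j = i$.

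For \emph{part (ii)}, Lemma~\ref{tilting ascent} already supplies that $T' = T \otimes_R S$ is $n$-tilting with right tilting class $\mathcal{B}' = (\mathcal{S} \otimes_R S)^\perp$, where $\mathcal{S}$ is a representative set of strongly finitely presented modules in the left class of $\bar P$. Proposition~\ref{ascends}(1) identifies $\mathcal{B}' = \varphi_*^{-1}(\mathcal{B})$, so an $S$-module $N$ belongs to $\mathcal{B}'$ if and only if $\varphi_*(N) \in \mathcal{T}_{\bar P}$. I would then expand the last condition via Theorem~\ref{comnoe} into the Tor-vanishing $\Tor{i}{R}{N}{R/I} = 0$ for $i < n$ and $I \in \mathcal{I}_i$, rewrite these groups as $\Tor{i}{S}{N}{S/IS}$ using $R/I \otimes_R S \cong S/IS$ together with Lemma~\ref{localize}(3), and finally invoke Theorem~\ref{comnoe} in $\Spec S$ (justified by part (i)) to conclude $N \in \mathcal{T}_{\bar Q_{\bar P}}$.

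For the \emph{monicity} claim, assume $\varphi$ is faithfully flat and argue by contraposition. Given distinct characteristic sequences $\bar P \neq \bar P'$, pick an index $i$ with $P_i \neq P_i'$ and, without loss of generality, $\p \in P_i \setminus P_i'$; by surjectivity of $\varphi^*$ (Lemma~\ref{faithful}) there exists $\q \in \Spec S$ with $\q \cap R = \p$, and this $\q$ lies in $Q_i = (\varphi^*)^{-1}(P_i)$ but not in $Q_i' = (\varphi^*)^{-1}(P_i')$, so $\bar Q_{\bar P} \neq \bar Q_{\bar P'}$.
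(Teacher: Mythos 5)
Your proposal is correct. Part (i) and the monicity claim follow essentially the paper's own argument: for the grade condition the paper likewise uses that the nesting $P_i\subseteq P_j$ ($j\le i$) upgrades the single condition $P_i\cap\VAss{\Omega^{-i}R}=\emptyset$ to the full range $j\le i$ needed to invoke Lemma~\ref{gradelemma}, then transports the Koszul cohomology vanishing via Lemma~\ref{koszul-faith}; and for injectivity the paper shows $\varphi^*[Q_i]=P_i$ using surjectivity of $\varphi^*$ from Lemma~\ref{faithful}, which is the same mechanism as your contrapositive argument. Part (ii) is where you genuinely diverge: the paper works with the resolving set of Lemma~\ref{resolving}, checking that the Koszul cokernels $S_{I,i+1}$ base-change to the corresponding cokernels $S_{IS,i+1}$ over $S$ and then applying Lemma~\ref{resolving} a second time over $S$, whereas you bypass the resolving set entirely by combining $\mathcal{B}'=\varphi_*^{-1}(\mathcal{B})$ from Proposition~\ref{ascends}(1) with the Tor-description of Theorem~\ref{comnoe} and the change-of-rings isomorphism $\Tor{i}{S}{R/I\otimes_RS}{N}\cong\Tor{i}{R}{R/I}{N}$ of Lemma~\ref{localize}(3). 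Your route is somewhat shorter and avoids any discussion of how the modules $S_{I,i+1}$ behave under base change; the paper's route has the advantage of explicitly identifying the resolving set of $\mathcal{B}'$ over $S$, which is information it reuses elsewhere. Both are valid, and your use of Theorem~\ref{comnoe} over $S$ is legitimately justified by part (i) together with the identity $Q_i=\bigcup_{I\in\mathcal{I}_i}V(IS)$.
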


\begin{proof} Throughout the proof, we denote by $\mathcal{I}_i$ a set of finitely generated ideals of $R$ such that $P_i = \bigcup_{I \in \mathcal{I}_i}V(I)$.
		
		(i) We start by proving that $\bar Q _{\bar P}$ is a characteristic sequence in $\Spec S$. First, it is clear that $P_i \supseteq P_{i+1}$ implies $Q_i \supseteq Q_{i+1}$ for each $i<n-1$. Since $(\varphi^{-1})(V(I)) = V(SI)$ for each ideal $I\subseteq R$, it follows that $Q_i = \bigcup_{I \in \mathcal{I}_i} V(IS)$.
	As $IS$ is finitely generated for any $I \in \mathcal{I}_i$, $Q_i$ is a Thomason subset of $\mbox{Spec}(S)$ for any $i<n$ (in other words, $\varphi^*$ is continuous with respect to the Thomason topologies).

		Fix an ideal $I \in \mathcal{I}_i$. Since $\bar P$ is characteristic, we have $H_R^j(I;R)=0$ for all $j \leq i$. Then Lemma~\ref{koszul-faith} yields $H_S^j(IS;S)=0$ for all $j \leq i$. This shows that $\bar Q$ is a characteristic sequence in $\Spec S$.

		(ii) By Lemma~\ref{resolving}, the $n$-tilting class $\mathcal{B}$ equals $\bigcap_{i=0}^{n-1}\bigcap_{I \in \mathcal{I}_i} (S_{I,i+1})^\perp$, where $S_{I,i+1}$ is the cokernel of the map $\Hom{R}{d_{i+1}}{R}$, where $d_{i+1}$ is the map in degree $i+1$ of the Koszul complex $K_\bullet(x_1,\ldots,x_n)$ on generators $\{x_1,\ldots,x_n\}$ of ideal $I$. Similarly to Lemma~\ref{koszul-faith}, it is easy to see that $S_{I,i+1} \otimes_R S$ is the cokernel of the map $\Hom{S}{d'_{i+1}}{S}$, where
		$$F_n \otimes_R S \xrightarrow{d'_n}  F_{n-1} \otimes_R S \xrightarrow{d'_{n-1}} \cdots \xrightarrow{d'_2} F_1 \otimes_R S \xrightarrow{d'_1} F_0 \otimes_R S \rightarrow 0$$
		is (isomorphic to) the Koszul complex $K_\bullet(x_1 \otimes_R 1, \ldots, x_n \otimes_R 1)$ on the corresponding generators of the ideal $IS$ of ring $S$. By Proposition~\ref{ascends}, we have $\mathcal{B}'= \bigcap_{i=0}^{n-1}\bigcap_{I \in \mathcal{I}_i} (S_{I,i+1} \otimes_R S)^\perp$. By using Lemma~\ref{resolving} again, the latter class equals $\mathcal{T}_{\bar{Q}_{\bar{P}}}$.

		Finally, in order to show that the assignment $\bar{P} \mapsto \bar{Q}_{\bar{P}}$ is monic in the faithfully flat situation, it is enough to show that $\varphi^*[Q_i]=\{\q \cap R \mid \q \in Q_i\}=P_i$ for each $i<n$. The inclusion $\varphi^*[Q_i] \subseteq P_i$ has been already proved in the first part. Let now $\p \in P_i$ and $I \in \mathcal{I}_i$ be such that $I \subseteq \p$. By Lemma~\ref{faithful}, there is $\q \in \Spec{S}$ such that $\varphi^*(\q)=\q\cap R=\p$. Then we have 
		$$IS\subseteq \p S = (\q \cap R)S \subseteq \q,$$
		proving that $\q \in Q_i$.
\end{proof}

Next we show how the characteristic sequence $\bar{P}$ can be recovered from the $n$-tilting module $T$ in homological terms. In order to do this, we will need to recall the dual setting (we refer to \cite[Chapter 15]{GT} for details on cotilting modules and duality).

Let $R$ be a commutative ring and $T$ an $n$-tilting module corresponding to a characteristic sequence $\bar{P}=(P_0,P_1,\ldots,P_{n-1})$ in the sense of Theorem~\ref{comnoe}. We fix an injective cogenerator $W$ of $\ModR$ and let $(-)^+=\Hom{R}{-}{W}$ denote the duality with respect to $W$. Then the dual module $C=T^+$ is an \emph{$n$-cotilting module}, that is, an $R$-module satisfying the conditions dual to Definition~\ref{tilt}: 
\begin{definition}\label{cotilt} Let $R$ be a ring and $n<\omega$. A left $R$-module $C$ is \emph{n-cotilting} provided that:
\begin{itemize}
\item[\rm{(C1)}] $C$ has injective dimension $\leq n$.
\item[\rm{(C2)}] $\mbox{Ext}^i_R(C^\kappa,C) = 0$ for all $1 \leq i \leq n$ and all cardinals $\kappa$.
\item[\rm{(C3)}] There exists an exact sequence $0 \to C_n \to \dots \to C_0 \to W \to 0$ where $W$ is an injective cogenerator of $\ModR$, where $C_i \in \Prod (C)$ for each $i \leq n$, and where $\Prod(C)$ denotes the class of direct summands of direct products of copies of $C$.
\end{itemize}
\end{definition}
The class $\mathcal{C}={}^{\perp_\infty}\{C\}$ is the \emph{$n$-cotilting class} associated to $C$. Then the $n$-cotilting class $\mathcal{C}$ arising in this way is uniquely determined by the tilting class $T^\perp$; we say that $\mathcal{C}$ corresponds to the characteristic sequence $\bar{P}$. We gather several useful properties of cotilting classes dual to tilting classes from \cite{APST} and \cite{HS}.
Given a class of modules $\mathcal D$, we denote by $\Ass{}{\mathcal{D}}$ is the union of the sets of associated prime ideals taken over all $M\in\mathcal{D}$.

\begin{lemma}\label{cotilting}
		Let $R$ be a commutative ring, $T$ an $n$-tilting module with the corresponding characteristic sequence $\bar{P}=(P_0,P_1,\ldots,P_{n-1})$, $C=T^+$, and $\mathcal{C}={}^{\perp_\infty}\{C\}$ the corresponding $n$-cotilting class. Then:
		\begin{enumerate}
				\item $\mathcal{C}=\bigcap_{i=0}^{n-1} \bigcap_{I\in\mathcal I _i} \Ker{\check{H}^i(I;-)}$, where $\mathcal I _i$ is a set of finitely generated ideals such that $\bigcup_{I\in \mathcal I _i} V(I) = P_i$ (cf. Lemma~\ref{gradelemma}).
				\item For any $i<n$, the class $\mathcal{C}_{(i)}={}^{\perp_\infty}(\Omega^{-i}(C))$ is an $(n-i)$-cotilting class corresponding to the characteristic sequence $(P_i,P_{i+1},\ldots,P_{n-1})$.
				\item $P_i = \Spec{R} \setminus \Ass{}{\mathcal{C}_{(i)}}$ for any $i<n$.
		\end{enumerate}
\end{lemma}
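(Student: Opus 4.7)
This lemma collects consequences of the structure theorem for commutative cotilting classes from \cite{APST,HS}. My plan is to leverage the Ext-Tor duality $\Ext{i}{R}{N}{M^+}\cong\Tor{i}{R}{N}{M}^+$ (valid whenever the coefficient module $W$ is injective) together with the Tor description of tilting classes from Theorem~\ref{comnoe} and Lemma~\ref{resolving}.

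For (1), the duality immediately yields $\mathcal{C}=\{N\mid\Tor{i}{R}{N}{T}=0 \text{ for } 1\le i\le n\}$. Since $T$ is a direct summand of an $\mathcal{S}$-filtered module, the Eklof-type lemma for Tor reduces this to vanishing of Tors of $N$ with the modules $S_{I,k+1}$ from Lemma~\ref{resolving}. The Koszul resolution of $S_{I,k+1}$, combined with the identification $\check{C}^\bullet(I)\simeq\varinjlim_m K_\bullet(x_1^m,\ldots,x_r^m)[-1]$ and the equivalence (ii)$\Leftrightarrow$(iv) of Lemma~\ref{gradelemma}, translates this into $\check{H}^k_R(I;N)=0$ for $k<n$ and $V(I)\subseteq P_k$, as required. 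This parallels the strategy of \cite[\S 5--6]{HS}.

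For (2), a dimension shift along the minimal injective coresolution of $C$ gives $\Ext{j}{R}{N}{\Omega^{-i}C}\cong\Ext{j+i}{R}{N}{C}$ for $j\ge 1$; combined with the Ext-Tor duality and the dimension shift $\Tor{j+i}{R}{N}{T}\cong\Tor{j}{R}{N}{\Omega^iT}$ along the projective resolution of $T$, one rewrites $\mathcal{C}_{(i)}=\{N\mid\Tor{j}{R}{N}{\Omega^iT}=0,\ 1\le j\le n-i\}={}^{\perp_\infty}\{(\Omega^iT)^+\}$. To see that $(\Omega^iT)^+$ is an $(n-i)$-cotilting module, I would verify (C1)--(C3) of Definition~\ref{cotilt}: (C1) from $\pd{\Omega^iT}\le n-i$; (C2) by repeated dimension shifting reducing to (C2) for $C$, using that $\Prod$-constructions preserve exactness along the truncated coresolution $0\to C\to E_0\to\cdots\to E_{i-1}\to\Omega^{-i}C\to 0$; (C3) by splicing the truncated coresolution with the (C3)-sequence of $C$. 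Repeating the filtration-to-\v{C}ech argument of (1) for $\Omega^iT$, whose $\mathcal{S}$-filtration inherited via Lemma~\ref{resolving} only involves modules $S_{I,k+1}$ with $V(I)\subseteq P_{i+k}$ and $k<n-i$, then identifies the characteristic sequence as $(P_i,\ldots,P_{n-1})$.

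For (3), apply parts (1) and (2) with the shifted sequence. For any $N\in\mathcal{C}_{(i)}$, the vanishing $\check{H}^0(I;N)=\Gamma_I(N)=0$ holds for every $I\in\mathcal{I}_i$; hence if $\p=\Ann{n}\in P_i$ with $n\ne 0$ were associated to $N$, any $I\in\mathcal{I}_i$ with $I\subseteq\p$ would give $n\in\Gamma_I(N)=0$, a contradiction, so $P_i\cap\Ass{}{\mathcal{C}_{(i)}}=\emptyset$. Conversely, for $\p\notin P_i$, the injective envelope $E(R/\p)$ lies in $\mathcal{C}_{(i)}$: $\Gamma_I(E(R/\p))=0$ for any $I$ with $V(I)\subseteq P_{i+j}\subseteq P_i$ since $I\not\subseteq\p$, while $\Ext{k}{R}{R/I}{E(R/\p)}=0$ for $k\ge 1$ by injectivity, so Lemma~\ref{gradelemma} (ii)$\Leftrightarrow$(iv) delivers the required \v{C}ech-cohomology vanishing. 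Since $\Ass{}{E(R/\p)}=\{\p\}$, this gives $\p\in\Ass{}{\mathcal{C}_{(i)}}$. The main obstacle is part (1), and by extension its repetition inside (2): bridging Tor-vanishing with $T$ and \v{C}ech-cohomology vanishing with the ideals indexing the characteristic sequence requires careful use of the $\mathcal{S}$-filtration structure and the colimit description of $\check{C}^\bullet(I)$; once (1) is in hand, parts (2) and (3) follow by the syzygy shift and the injective-envelope argument.
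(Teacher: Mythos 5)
The paper does not actually prove this lemma: all three parts are delegated to citations (\cite[Theorem 7.7]{HS} for (1), \cite[Lemma 3.5]{APST} and \cite[Lemma 5.14]{HS} for (2), \cite[Theorem 6.1]{HS} for (3)), so you are attempting to re-derive the cited results. Your sketches of (1) and (3) follow the right strategy: the Ext--Tor duality $\Ext{i}{R}{N}{T^+}\cong(\Tor{i}{R}{N}{T})^+$, the reduction to the modules $S_{I,k+1}$ via filtrations (using that each $S\in\mathcal S$ is in turn a summand of a module filtered by $R,T,\Omega T,\dots$, so the reduction goes both ways), and for (3) the observation that only $\p\in\Ass{}{E(R/\p)}$ is needed, not $\Ass{}{E(R/\p)}=\{\p\}$. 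The "filtration-to-\v{C}ech" step in (1) is where essentially all of \cite[Theorem 7.7]{HS} lives and is only gestured at, but the plan is sound.

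The genuine gap is in part (2). The lemma asserts that the \emph{class} ${}^{\perp_\infty}(\Omega^{-i}C)$ is an $(n-i)$-cotilting class; it does not assert that the module $\Omega^{-i}C$ (equivalently, up to the class it cogenerates, $(\Omega^iT)^+$) is an $(n-i)$-cotilting module, and in general it is not. Your verification of (C3) "by splicing the truncated coresolution with the (C3)-sequence of $C$" cannot work: the injective terms $E_j$ of the coresolution and the terms $C_j\in\Prod(C)$ of the (C3)-sequence for $C$ have no reason to lie in $\Prod(\Omega^{-i}C)$, and indeed (C3) genuinely fails for cosyzygies --- in the degenerate case $P_i=\dots=P_{n-1}=\emptyset$ one has ${}^{\perp_\infty}(\Omega^{-i}C)=\rmod R$, so $\Omega^{-i}C$ is injective but need not be a cogenerator, hence need not be $0$-cotilting. (Your (C2) argument has a similar problem: dimension shifting in the first variable of $\Ext{j}{R}{(\Omega^{-i}C)^\kappa}{\Omega^{-i}C}$ would require a projective, not injective, resolution of $(\Omega^{-i}C)^\kappa$.) The cited \cite[Lemma 3.5]{APST} instead shows the class is cotilting via an abstract characterization of $m$-cotilting classes. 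Alternatively, within your framework you could note that $(P_i,\dots,P_{n-1})$ is again a characteristic sequence (condition (ii) of Definition~\ref{defchar} is inherited since $P_{i+k}\subseteq P_k$), take the $(n-i)$-tilting module it determines via Theorem~\ref{comnoe}, and use your part (1) together with your (correct) identification ${}^{\perp_\infty}(\Omega^{-i}C)=\{N\mid \Tor{j}{R}{N}{\Omega^iT}=0,\ j\ge1\}$ to match the two classes; that would repair (2) without claiming that the cosyzygy itself is cotilting.
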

\begin{proof}
		\begin{enumerate}
			\item This is by \cite[Theorem 7.7]{HS}.
			\item This follows from \cite[Lemma 3.5]{APST} and \cite[Lemma 5.14]{HS}.
			\item We refer to \cite[Theorem 6.1]{HS}.
		\end{enumerate}
\end{proof}

	Let $R$ be a commutative ring. We recall that a Thomason set $P$ induces a torsion pair $(\mathcal{T}(P),\mathcal{F}(P))$, where $\mathcal{F}(P)=\{M \in \ModR \mid \VAss{M} \cap P = \emptyset\}$. This is a \emph{hereditary torsion pair of finite type} (i.e., a torsion pair such that the torsion-free class is closed under taking injective envelopes and direct limits), and the assignment $P \mapsto (\mathcal{T}(P),\mathcal{F}(P))$ induces a bijection between Thomason sets in $\Spec{R}$ and hereditary torsion pairs of finite type in $\ModR$ (see e.g.\ \cite[Propositions 2.11 and 2.13]{HS}).

Recall that for a prime $\p \in \Spec{R}$, we denote by $\kappa(\p)$ the quotient field of $R/\p$.
\begin{lemma}\label{Cech_residue}
	Let $I$ be a finitely generated ideal and $\p \in \Spec{R}$. Then 

		$$\check{C}(I) \otimes_R \kappa(\p) \text{ is quasi-isomorphic to } \begin{cases}\kappa(\p), & \text{if } \p \in V(I) \\ 0, & \text{if } \p \not\in V(I). \end{cases}$$
\end{lemma}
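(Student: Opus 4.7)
The plan is to reduce the statement to the one-variable case by exploiting that the \v{C}ech complex factors as a tensor product. Since the statement only concerns the quasi-isomorphism class and this was shown in \cite[Corollary 3.12]{Gr} to be independent of the choice of generators, I am free to fix generators $x_1,\ldots,x_n$ of $I$ and work with $\check{C}^\bullet(I)=\check{C}^\bullet(x_1)\otimes_R\cdots\otimes_R\check{C}^\bullet(x_n)$. Because $\check{C}^\bullet(x_i)$ consists of flat $R$-modules in each degree, tensoring on the right with $\kappa(\p)$ commutes with the tensor product of the factors; moreover, each $\check{C}^\bullet(x_i)\otimes_R\kappa(\p)$ is a two-term complex of $\kappa(\p)$-vector spaces, hence of flat $\kappa(\p)$-modules, so total tensor products over $\kappa(\p)$ preserve quasi-isomorphisms. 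The task therefore reduces to analyzing each factor $\check{C}^\bullet(x_i)\otimes_R\kappa(\p)$ separately.

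I compute the single-variable case by observing that the natural map $R\xrightarrow{\iota} R[x^{-1}]$ tensored with $\kappa(\p)$ is the localization map $\kappa(\p)\to \kappa(\p)[\bar x^{-1}]$, where $\bar x$ is the image of $x$ in $\kappa(\p)$. If $x\in\p$, then $\bar x=0$, so $\kappa(\p)[\bar x^{-1}]=0$ and the two-term complex is $\kappa(\p)\to 0$, which is quasi-isomorphic to $\kappa(\p)$ concentrated in degree $0$. If $x\notin\p$, then $\bar x$ is a nonzero element of the field $\kappa(\p)$, hence a unit; thus the localization map is the identity $\kappa(\p)\to\kappa(\p)$, and this factor is acyclic.

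To finish the proof, I combine these local computations. If $\p\in V(I)$, then every generator $x_i$ lies in $\p$, so each factor is quasi-isomorphic to $\kappa(\p)$ in degree $0$, and hence the total tensor product over $\kappa(\p)$ is quasi-isomorphic to $\kappa(\p)^{\otimes n}=\kappa(\p)$ in degree $0$. If $\p\notin V(I)$, then some $x_j$ lies outside $\p$, so the $j$-th factor is acyclic; tensoring an acyclic complex of $\kappa(\p)$-vector spaces with any complex of $\kappa(\p)$-vector spaces yields an acyclic complex, so $\check{C}^\bullet(I)\otimes_R\kappa(\p)$ is quasi-isomorphic to $0$.

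The only step requiring slight care is the interchange of the external tensor product defining $\check{C}^\bullet(I)$ with the base change $-\otimes_R\kappa(\p)$, together with the observation that, over the field $\kappa(\p)$, acyclicity of one factor forces acyclicity of the entire tensor product; but both assertions follow immediately from flatness considerations, so no genuine obstacle arises.
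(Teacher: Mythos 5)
Your proof is correct and follows essentially the same route as the paper: the paper identifies $\check{C}^\bullet(I)\otimes_R\kappa(\p)$ with the \v{C}ech complex over $\kappa(\p)$ of the image ideal $\bar{I}$ and splits into the cases $\bar{I}=0$ and $\bar{I}=\kappa(\p)$, which is exactly your factor-by-factor computation (each $\bar{x}_i$ is either $0$ or a unit) made explicit. The only cosmetic remark is that the base-change identity $(A\otimes_R B)\otimes_R\kappa(\p)\cong(A\otimes_R\kappa(\p))\otimes_{\kappa(\p)}(B\otimes_R\kappa(\p))$ is purely formal and needs no flatness; flatness (or rather, working over the field $\kappa(\p)$) is only relevant for the K\"unneth-type conclusion that one acyclic factor kills the whole tensor product, which you handle correctly.
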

\begin{proof}
		It is easy to see that $\check{C}(I) \otimes_R \kappa(\p)$ is isomorphic to the \v{C}ech complex over the ring $\kappa(\p)$ with respect to the ideal $\bar{I}$---the image of $I$ in the natural map $R \rightarrow \kappa(\p)$. If $I \subseteq \p$, then $\bar{I}=0$, and the complex $\check{C}(I) \otimes_R \kappa(\p)$ is quasi-isomorphic to $\kappa(\p)$. If $I \not\subseteq \p$, then $\bar{I}=\kappa(\p)$, and $\check{C}(I) \otimes_R \kappa(\p)$ is exact.
\end{proof}

Now we can obtain the following description of cotilting classes corresponding to characteristic sequences and their relation to residue fields.

\begin{lemma}\label{cotilting2}
		Let $R$ be a commutative ring, $n>0$, $\bar{P}=(P_0,P_1,\ldots,P_{n-1})$ a characteristic sequence, and $\mathcal{C}$ an $n$-cotilting class corresponding to $\bar{P}$. Then:
		\begin{enumerate}
				  \item $\mathcal{C}=\{M \in \ModR \mid \Omega^{-i}(M) \in \mathcal{F}(P_i) \text{ for all $i<n$}\}$.
		    	\item Given $\p \in \Spec{R}$, we have the following equivalences for each $i<n$:
					
					\smallskip
					\begin{enumerate}
					\item[$(\ast)$] $\p \in P_i \Leftrightarrow \kappa(\p) \in \mathcal{T}(P_i)$,
					\item[$(\ast\ast)$] $\p \in \Ass{}{\mathcal{C}_{(i)}} \Leftrightarrow \p \not\in P_i \Leftrightarrow \kappa(\p) \in \mathcal{F}(P_i) \Leftrightarrow \kappa(\p) \in \mathcal{C}_{(i)}$.
					\end{enumerate}
					\smallskip
					
			  	\item $\mathcal{F}(P_i)$ is equal to the closure of $\mathcal{C}_{(i)}$ under submodules. In particular, $\mathcal{F}(P_0)$ is cogenerated by $C$, whenever $C$ is an $n$-cotilting module such that $\mathcal{C}={}^{\perp_\infty}\{C\}$.
		\end{enumerate}
\end{lemma}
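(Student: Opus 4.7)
The plan is to handle the three items (i), (ii), (iii) in order, using Lemma~\ref{cotilting}(i) together with Lemmas~\ref{gradelemma} and \ref{Cech_residue} as the principal tools. The key combinatorial observation is that because of the inclusion chain $P_0\supseteq P_1\supseteq\cdots\supseteq P_{n-1}$, I can replace the originally chosen index sets by the larger $\mathcal{I}_i^\ast:=\{I\text{ a finitely generated ideal of }R\mid V(I)\subseteq P_i\}$, which are nested $\mathcal{I}_0^\ast\supseteq\mathcal{I}_1^\ast\supseteq\cdots$ and still satisfy $P_i=\bigcup_{I\in\mathcal{I}_i^\ast}V(I)$ because each $P_i$ is Thomason.

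For (i), I use Lemma~\ref{cotilting}(i) to write $\mathcal{C}=\bigcap_{i<n}\bigcap_{I\in\mathcal{I}_i^\ast}\Ker{\check{H}^i(I;-)}$. In the forward direction, if $M\in\mathcal{C}$ and $I\in\mathcal{I}_i^\ast$, then also $I\in\mathcal{I}_j^\ast$ for every $j\le i$, so $\check{H}^j(I;M)=0$ for $j=0,1,\dots,i$. Lemma~\ref{gradelemma}, applied to the singleton $\{I\}$ with the integer $i+1$ in the role of $n$, then yields $V(I)\cap\VAss{\Omega^{-i}M}=\emptyset$, and taking the union over $I\in\mathcal{I}_i^\ast$ gives $P_i\cap\VAss{\Omega^{-i}M}=\emptyset$, i.e.\ $\Omega^{-i}M\in\mathcal{F}(P_i)$. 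For the converse, assume $\Omega^{-j}M\in\mathcal{F}(P_j)$ for all $j<n$ and fix $I\in\mathcal{I}_i^\ast$; since $V(I)\subseteq P_i\subseteq P_j$ for $j\le i$, I have $V(I)\cap\VAss{\Omega^{-j}M}\subseteq P_j\cap\VAss{\Omega^{-j}M}=\emptyset$ for each such $j$, and Lemma~\ref{gradelemma} produces $\check{H}^j(I;M)=0$ for $0\le j\le i$, in particular in degree $i$; Lemma~\ref{cotilting}(i) then yields $M\in\mathcal{C}$.

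For (ii), I first establish $(\ast)$ by the direct computation $\VAss{\kappa(\p)}=\{\p\}$: every nonzero element of $\kappa(\p)$ has $R$-annihilator precisely $\p$, so every nonzero module in the smallest class containing $\kappa(\p)$ closed under submodules and direct limits is a $\kappa(\p)$-vector space, and the only quotient $R/\q$ that can embed into such a module is $R/\p$. Hence $\kappa(\p)\in\mathcal{T}(P_i)\iff\p\in P_i$, and since the torsion and torsion-free classes of a torsion pair meet only in $0$, this also gives $\kappa(\p)\in\mathcal{F}(P_i)\iff\p\notin P_i$. The first equivalence in $(\ast\ast)$ is Lemma~\ref{cotilting}(iii), the second is $(\ast)$, and for the third I combine Lemma~\ref{cotilting}(i)+(ii) (yielding $\mathcal{C}_{(i)}=\bigcap_{j=0}^{n-i-1}\bigcap_{I\in\mathcal{I}_{i+j}^\ast}\Ker{\check{H}^j(I;-)}$) with Lemma~\ref{Cech_residue} (by which $\check{H}^j(I;\kappa(\p))=0$ for $j\ge 1$ in all cases and $\check{H}^0(I;\kappa(\p))$ equals $\kappa(\p)$ or $0$ according to whether $\p\in V(I)$): thus $\kappa(\p)\in\mathcal{C}_{(i)}$ if and only if no finitely generated ideal $I\subseteq\p$ satisfies $V(I)\subseteq P_i$, which is precisely $\p\notin P_i$.

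For (iii), the containment of the submodule closure of $\mathcal{C}_{(i)}$ inside $\mathcal{F}(P_i)$ is immediate: applying (i) to $\mathcal{C}_{(i)}$ at $j=0$ gives $\mathcal{C}_{(i)}\subseteq\mathcal{F}(P_i)$, and $\mathcal{F}(P_i)$ is closed under submodules as the torsion-free class of a hereditary torsion pair. For the converse, given $N\in\mathcal{F}(P_i)$ its injective envelope $E(N)$ is again in $\mathcal{F}(P_i)$ by hereditariness, and because $\Omega^{-j}E(N)=0$ for $j\ge 1$ the remaining conditions in (i) are trivially satisfied, so $E(N)\in\mathcal{C}_{(i)}$, and $N\hookrightarrow E(N)$ completes the argument. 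The ``in particular'' statement will follow by combining the standard cotilting inclusion $\mathcal{C}\subseteq\Cog C$ (every $M\in\mathcal{C}$ embeds into a member of $\Prod(C)$ by lifting an embedding into a product of $W$'s through the coresolution of (C3), using $\Ext{n+1}{R}{-}{C}=0$) with the observation that $\Cog C\subseteq\mathcal{F}(P_0)$, since $C\in\mathcal{C}\subseteq\mathcal{F}(P_0)$ by (i) and $\mathcal{F}(P_0)$ is closed under products and submodules. The only step I expect to require any real care is the computation $\VAss{\kappa(\p)}=\{\p\}$ in (ii); once that is in hand, everything else reduces to book-keeping with the nested Thomason sets and the equivalences supplied by Lemmas~\ref{gradelemma} and \ref{Cech_residue}, and no direct dimension-shifting of \v{C}ech cohomology through cosyzygies is needed.
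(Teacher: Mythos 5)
Your proposal is essentially sound in structure and, for parts (i) and (iii), considerably more self-contained than the paper, which simply cites \cite[Theorem 5.3]{HS} for (i) and \cite[Lemmas 5.7 and 5.8]{HS} for (iii). Your derivation of (i) from Lemma~\ref{cotilting}(i) --- passing to the saturated index sets $\mathcal{I}_i^\ast$ and using the nesting $P_0\supseteq\cdots\supseteq P_{n-1}$ so that Lemma~\ref{gradelemma} can be fed the vanishing of $\check{H}^j(I;M)$ for \emph{all} $j\le i$ --- is valid, as is the injective-envelope argument for (iii) (via Lemma~\ref{cotilting}(ii) and part (i) applied to the truncated sequence $(P_i,\dots,P_{n-1})$) and the lifting argument for the ``in particular'' clause. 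Your proof of the last equivalence in $(\ast\ast)$ coincides with the paper's.

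There is, however, a genuine (if small and reparable) gap in $(\ast)$: the implication $\p\in P_i\Rightarrow\kappa(\p)\in\mathcal{T}(P_i)$. The computation $\VAss{\kappa(\p)}=\{\p\}$ tells you, via the definition of $\mathcal{F}(P_i)$, only that $\kappa(\p)\notin\mathcal{F}(P_i)$ when $\p\in P_i$; since a torsion pair does not partition $\ModR$, the fact that $\mathcal{T}(P_i)\cap\mathcal{F}(P_i)=0$ lets you rule a nonzero module \emph{out} of one class given membership in the other, but never \emph{in}. So as written you obtain the $\Leftarrow$ direction of $(\ast)$ and the equivalence $\kappa(\p)\in\mathcal{F}(P_i)\Leftrightarrow\p\notin P_i$, but not the $\Rightarrow$ direction --- which is not cosmetic, as it is exactly what Proposition~\ref{characteristic2} later uses to conclude $\kappa(\p)\otimes_RT=0$. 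The paper closes this by observing that $\mathcal{F}(P_i)\subseteq\Ker{\Hom{R}{R/\p}{-}}$ for $\p\in P_i$, whence $R/\p\in\mathcal{T}(P_i)$, and that $\kappa(\p)$ is a flat $R/\p$-module, hence a direct limit of copies of $R/\p$ and therefore again in the torsion class. A separate, harmless slip: a nonzero $R$-submodule of $\kappa(\p)$ need not be a $\kappa(\p)$-vector space (e.g.\ $\mathbb{Z}\subseteq\mathbb{Q}$); the correct closure argument for $\VAss{\kappa(\p)}=\{\p\}$ is that the class of torsion-free $R/\p$-modules is closed under submodules and direct limits and contains $\kappa(\p)$, and $R/\q$ is a torsion-free $R/\p$-module only for $\q=\p$.
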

\begin{proof}
(i) This is \cite[Theorem 5.3]{HS}.

(ii) If $\p \in P_i$, then $\mathcal{F}(P_i) \subseteq \Ker{\Hom{R}{R/\p}{-}}$ (see e.g. \cite[Lemma 3.12]{HS}), and thus $R/\p \in \mathcal{T}(P_i)$. Since $\kappa(\p)$ is a flat $R/\p$-module, this yields $\kappa(\p) \in \mathcal{T}(P_i)$. If $\p \not\in P_i$, then $\kappa(\p) \in \mathcal{F}(P_i)$, since $\VAss{(\kappa(\p))}=\{\p\}$. However, $(\mathcal{T}(P),\mathcal{F}(P))$ is a torsion pair, so $(\ast)$ follows, as well as the second equivalence in $(\ast\ast)$. The first equivalence in $(\ast\ast)$ holds by Lemma \ref{cotilting}(iii). It remains to prove that 
		$\p \not\in P_i$, if and only if $\kappa(\p) \in \mathcal{C}_{(i)}$. 
				
		By Lemma~\ref{cotilting}(i) and (ii), an $R$-module $M$ belongs to $\mathcal{C}_{(i)}$ if and only if $\check{H}^{j-i}_R(I;M)=0$ for any $I\in\mathcal I_j$ and any $i \leq j <n$, where as usual $\mathcal I _j$ stands for a set of finitely generated ideals such that $P_j = \bigcup_{I \in \mathcal I _j} V(I)$. By Lemma~\ref{Cech_residue}, 
\begin{align*}
\check{H}^{j-i}_R(I;\kappa(\p)) = 0 \text{ for all } i \leq j <n \text{ and } I \in \mathcal I _j
&\iff \check{H}^0_R(I;\kappa(\p)) = 0 \textit{ for all } I \in \mathcal I _i \\
&\iff \p \not\in P_i,
\end{align*}
establishing the equivalence.

(iii) This follows from (the proof of) \cite[Lemma 5.7]{HS} together with \cite[Lemma 5.8]{HS}.
\end{proof}

In the setting of Lemma~\ref{cotilting}(ii), we have the following increasing chain of classes in $\rmod R$:
$$\mathcal{C}_{(-1)} := \{ 0 \} \subseteq \mathcal C = \mathcal{C}_{(0)} \subseteq \dots \subseteq \mathcal{C}_{(i-1)} \subseteq \mathcal{C}_{(i)} \subseteq \dots \subseteq \mathcal{C}_{(n)} = \rmod R.$$ 
We also have the decreasing chain of subsets of $\Spec R$:
$$P_{-1} := \Spec R \supseteq P_0  \supseteq \dots \supseteq P_{i-1} \supseteq P_{i} \supseteq \dots \supseteq P_{n-1} \supseteq P_n := \emptyset.$$
Of course, for each $\p \in \Spec R$, there is a unique $i \leq n$, such that $\p \in P_{i-1} \setminus P_i$, or equivalently (see Lemma \ref{cotilting2}(ii)), $\kappa(\p) \in \mathcal{C}_{(i)} \setminus \mathcal{C}_{(i-1)}$. The index $i$ can be determined simply by checking vanishing of the groups $\Tor{j}{R}{\kappa(\p)}{T}$ (for $j \leq n$):

\begin{proposition}\label{characteristic2} 
For each $j < \omega$, we have $\Tor{j}{R}{\kappa(\p)}{T} = 0$, if and only if $j \neq i$. In particular, $P_i = \{ \p \in P_{i-1} \mid \Tor{i}{R}{\kappa(\p)}{T}=0\}.$
\end{proposition}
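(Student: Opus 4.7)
My plan is to transport the statement to the cotilting side via the standard duality $C = T^+ = \Hom{R}{T}{W}$, where $W$ is a fixed injective cogenerator of $\ModR$. Applying the $\otimes$--$\Hom$ adjunction to a projective resolution of $\kappa(\p)$ and using exactness of $\Hom{R}{-}{W}$ yields the natural isomorphism
\[ \Ext{j}{R}{\kappa(\p)}{C} \cong \Hom{R}{\Tor{j}{R}{\kappa(\p)}{T}}{W}, \]
and since $W$ is a cogenerator, vanishing on one side is equivalent to vanishing on the other. By Lemma~\ref{cotilting2}(ii), the hypothesis $\p\in P_{i-1}\setminus P_i$ translates to $\kappa(\p)\in \mathcal{C}_{(i)}\setminus \mathcal{C}_{(i-1)}$, so I can control the Ext groups through the chain of cotilting classes together with their defining cosyzygies $\Omega^{-k}C$.

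For $j>i$, the inclusion $\kappa(\p) \in \mathcal{C}_{(i)} = {}^{\perp_\infty}\{\Omega^{-i}C\}$ gives $\Ext{k}{R}{\kappa(\p)}{\Omega^{-i}C}=0$ for every $k\geq 1$, and standard dimension shifting along an injective coresolution of $C$ translates this to $\Ext{j}{R}{\kappa(\p)}{C}=0$ for $j>i$. For $j = i$ with $i\geq 1$, the non-membership $\kappa(\p)\notin\mathcal{C}_{(i-1)}$ produces some $k\geq 1$ with $\Ext{k}{R}{\kappa(\p)}{\Omega^{-(i-1)}C}\neq 0$, hence some $j'\geq i$ with $\Ext{j'}{R}{\kappa(\p)}{C}\neq 0$; the previous vanishing then forces $j' = i$. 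The boundary case $i=0$ requires a separate argument: by Lemma~\ref{cotilting2}(ii)--(iii), $\kappa(\p)\in\mathcal{F}(P_0)$ and $\mathcal{F}(P_0)$ is cogenerated by $C$, so the nonzero module $\kappa(\p)$ admits a nonzero homomorphism into $C$, giving $\Hom{R}{\kappa(\p)}{C}\neq 0$.

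The range $0\leq j<i$ I would handle directly on the Tor side. After reducing to the local setting $R = R_\p$, $\p = \m$, $\kappa = R/\m$ (using $\Tor{j}{R}{\kappa(\p)}{T} \cong \Tor{j}{R_\p}{\kappa(\p)}{T_\p}$ from Lemma~\ref{localize}(3), together with the transport of the characteristic sequence provided by Lemma~\ref{partic2}), I would upgrade Theorem~\ref{comnoe} via Lemma~\ref{gradelemma} (applied to $T^+$) to the statement that for every $k<n$ and every finitely generated ideal $I$ with $V(I)\subseteq P_k$ one has $\Tor{k}{R}{T}{R/I}=0$; in particular, $V(I)\subseteq P_{i-1}$ forces this vanishing in all degrees $k<i$. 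Writing $\kappa = \varinjlim R/I$ over the finitely generated ideals $I\subseteq\m$, the hypothesis $\m\in P_{i-1}$ provides a finitely generated $I_0\subseteq\m$ with $V(I_0)\subseteq P_{i-1}$, and replacing each $I$ by $I+I_0$ shows that the ideals with $V(I)\subseteq P_{i-1}$ form a cofinal subsystem. Commuting Tor with this filtered colimit then yields $\Tor{j}{R}{\kappa}{T}=0$ for $j<i$. Finally, the formula $P_i = \{\p\in P_{i-1}\mid \Tor{i}{R}{\kappa(\p)}{T}=0\}$ is immediate: primes in $P_i$ have ``type'' strictly greater than $i$ and hence vanishing $\Tor{i}$, while primes in $P_{i-1}\setminus P_i$ have type exactly $i$ and hence non-vanishing $\Tor{i}$. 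I expect the cofinality argument underlying the $j<i$ vanishing to be the main technical point; the Ext-side manipulations amount to routine dimension shifts.
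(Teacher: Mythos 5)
Your argument is correct. For $j\ge i$ it follows the paper's own route: both pass to $C=T^+$ via $\Ext{j}{R}{\kappa(\p)}{C}\cong(\Tor{j}{R}{\kappa(\p)}{T})^+$ and dimension-shift along an injective coresolution using $\kappa(\p)\in\mathcal{C}_{(i)}\setminus\mathcal{C}_{(i-1)}$; you are in fact more explicit than the paper's ``by the same token'' at the boundary case $i=0$, where one needs $\Hom{R}{\kappa(\p)}{C}\neq 0$ rather than a higher Ext, and your appeal to Lemma~\ref{cotilting2}(iii) is exactly the right fix. For $j<i$ you genuinely diverge. The paper stays on the cotilting side: for $0<j<i$ it applies $\Hom{R}{\kappa(\p)}{-}$ to $0\to\Omega^{-(j-1)}(C)\to E(\Omega^{-(j-1)}C)\to\Omega^{-j}(C)\to 0$, uses Lemma~\ref{cotilting2}(i) to place $\Omega^{-j}(C)$ in $\mathcal{F}(P_j)$ while $\kappa(\p)\in\mathcal{T}(P_j)$, and gets $(\Tor{j}{R}{\kappa(\p)}{T})^+\cong\Hom{R}{\kappa(\p)}{\Omega^{-j}C}=0$. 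You instead localize at $\p$, write $\kappa(\p)$ as a filtered colimit of the $R_\p/I$ over finitely generated $I\subseteq\p R_\p$, note that the $I$ with $V(I)\subseteq P_{i-1}$ form a cofinal subsystem because $\p R_\p$ lies in the Thomason set $P_{i-1}$, and commute Tor with the colimit using $\Tor{k}{R}{T}{R/I}=0$ for $k<i$. That last vanishing needs no ``upgrade via Lemma~\ref{gradelemma}'': it is Theorem~\ref{comnoe} applied to $M=T$ with the maximal collections $\mathcal{I}_k$, combined with the nesting $V(I)\subseteq P_{i-1}\subseteq P_k$ for $k\le i-1$. Your route trades the structural input on minimal cosyzygies of $C$ for a localization-plus-cofinality computation carried out entirely on the Tor side; the paper's version is shorter once Lemma~\ref{cotilting2}(i) is in hand, while yours is more self-contained and makes the role of the Thomason condition on $P_{i-1}$ explicit.
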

\begin{proof}
Since $\kappa(\p) \in \mathcal{C}_{(i)} \setminus \mathcal{C}_{(i-1)}$ and $C=T^+$, we in particular have
\[
\kappa(\p) \in \bigcap_{j>i}\Ker{\Ext{j}{R}{-}{T^+}} = \bigcap_{j>i}\Ker{\Tor{j}{R}{-}{T}}.
\]
Thus $\Tor{j}{R}{\kappa(\p)}{T}=0$ for all $j>i$. Since $\kappa(\p)\not\in\mathcal{C}_{(i-1)}$, we have by the same token $\Tor{i}{R}{\kappa(\p)}{T} \neq 0$. 

For each $j<i$, Lemma~\ref{cotilting2}(ii) yields $\kappa(\p) \in \mathcal{T}(P_j)$. This implies $\Hom{R}{\kappa(\p)}{C}=0$, i.e., $\kappa(\p) \otimes _R T = 0$. For $0<j<i$, apply $\Hom{R}{\kappa(\p)}{-}$ to the exact sequence $0 \rightarrow \Omega^{-(j-1)}(C) \rightarrow E(\Omega^{-(j-1)}C)\rightarrow \Omega^{-j}(C) \rightarrow 0$. Since both $\Omega^{-(j-1)}(C)$ and its injective envelope belong to $\mathcal{F}(P_{j-1})$ by Lemma~\ref{cotilting2}(i), we get $\Hom{R}{\kappa(\p)}{\Omega^{-j}C} \cong \Ext{1}{R}{\kappa(\p)}{\Omega^{-(j-1)}(C)} \cong \Ext{j}{R}{\kappa(\p)}{C} \cong (\Tor{j}{R}{\kappa(\p)}{T})^+$. Since $\Omega^{-j}(C) \in \mathcal{F}(P_j)$ and $\kappa(\p) \in \mathcal{T}(P_j)$, we conclude that $\Tor{j}{R}{\kappa(\p)}{T} = 0$.	
\end{proof}

As already mentioned, the Thomason sets of the spectrum are precisely the open sets of the Hochster dual of the spectrum, \cite{H}. This justifies the following definition---a subset $X$ of $\Spec{R}$ is \emph{Thomason closed} if its complement $\Spec{R} \setminus X$ is Thomason. It is easy to see from the definition that Thomason closed subsets are precisely (arbitrary) intersections of quasi-compact Zariski open subsets.

\begin{lemma}\label{Tho-max}
	Let $X_i$, $i \in I$, be a collection of Thomason closed subsets of $\Spec{R}$. Then whenever we chose $\q_i \in X_i$ for each $i \in I$, there is a maximal (with respect to inclusion) element $\q \in \bigcap_{i \in I}X_i$ such that $\bigcap_{i \in I} \q_i \subseteq \q$.

		In particular, any Thomason closed set $X$ has enough maximal elements (i.e.\ each $\q\in X$ is contained in $\q'$ which is maximal in $X$ with respect to inclusion).
\end{lemma}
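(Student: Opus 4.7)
The plan is a Zorn's lemma argument carried out at the level of ideals and then upgraded to prime ideals. For each $i\in I$, write $X_i=\bigcap_{J\in\mathcal{I}_i}D(J)$ for some family $\mathcal{I}_i$ of finitely generated ideals of $R$, and set $\mathcal{J}=\bigcup_{i\in I}\mathcal{I}_i$, so that membership of a prime $\p$ in $\bigcap_{i\in I}X_i$ is equivalent to the condition that $J\not\subseteq\p$ for every $J\in\mathcal{J}$.

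Let $\Sigma$ denote the poset of all ideals $\mathfrak{a}$ of $R$ satisfying $\bigcap_{i\in I}\q_i\subseteq\mathfrak{a}$ and $J\not\subseteq\mathfrak{a}$ for every $J\in\mathcal{J}$, ordered by inclusion. First I would check that $\Sigma$ is nonempty, because $\bigcap_{i\in I}\q_i$ itself belongs to $\Sigma$: if $J\in\mathcal{I}_{i_0}$ were contained in $\bigcap_i\q_i$, then $J\subseteq\q_{i_0}$, contradicting $\q_{i_0}\in X_{i_0}$. The key step to apply Zorn's lemma is that every chain $\{\mathfrak{a}_\lambda\}$ in $\Sigma$ is bounded above by its union: the containment condition is immediate, and the avoidance condition uses finite generation of each $J\in\mathcal{J}$, since its finitely many generators must all lie in a single $\mathfrak{a}_{\lambda_0}$, so $J\subseteq\bigcup_\lambda\mathfrak{a}_\lambda$ would force $J\subseteq\mathfrak{a}_{\lambda_0}$, contradicting $\mathfrak{a}_{\lambda_0}\in\Sigma$.

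Zorn's lemma then produces a maximal $\q\in\Sigma$, and the final---and main---step would be to verify that $\q$ is in fact a prime ideal of $R$, so that $\q\in\bigcap_{i\in I}X_i$ is the desired maximal element. I expect this to follow from the classical argument: if $a,b\in R\setminus\q$ with $ab\in\q$, then by maximality $\q+(a)$ and $\q+(b)$ both leave $\Sigma$, yielding $J_a,J_b\in\mathcal{J}$ with $J_a\subseteq\q+(a)$ and $J_b\subseteq\q+(b)$, whence $J_aJ_b\subseteq\q+(ab)=\q$; the contradiction with $\q\in\Sigma$ is then extracted by replacing $\mathcal{J}$ from the outset by its closure under finite products (which remains a family of finitely generated ideals), so that $J_aJ_b$ itself belongs to the enlarged family. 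The ``in particular'' statement follows as the case $I=\{*\}$, $X_*=X$, $\q_*=\q$, where $\Sigma$ already contains the given prime $\q$ and the above Zorn argument specializes to yield a maximal prime $\q'\in X$ with $\q\subseteq\q'$.
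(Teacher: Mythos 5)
Your argument is, in substance, the same as the paper's: write the complement of $\bigcap_{i\in I}X_i$ as $\bigcup_{J\in\mathcal{J}}V(J)$ with each $J$ finitely generated, run Zorn's lemma on the poset $\Sigma$ of ideals containing $\bigcap_{i\in I}\q_i$ and avoiding every $J$, and use closure of $\mathcal{J}$ under products to force a maximal element of $\Sigma$ to be prime. There is, however, a genuine gap, and it sits exactly where you are briefest: the non-emptiness of $\Sigma$. You verify $\bigcap_i\q_i\in\Sigma$ only for $J$ in the original family $\bigcup_i\mathcal{I}_i$ (using that $J\in\mathcal{I}_{i_0}$ and $J\subseteq\q_{i_0}$ would contradict $\q_{i_0}\in X_{i_0}$), whereas your primality step needs $\Sigma$ to be taken with respect to the product-closed family. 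For a product $J_1J_2$ with $J_1\in\mathcal{I}_{i_1}$, $J_2\in\mathcal{I}_{i_2}$ and $i_1\neq i_2$, the containment $J_1J_2\subseteq\q_{i_1}$ only yields (by primality of $\q_{i_1}$) that \emph{some} factor lies in $\q_{i_1}$, and that factor may well index a different $X_{i_k}$, so no contradiction results. This cannot be patched, because the statement itself fails in this generality: take $R=\mathbb{Z}$, $X_1=\Spec{R}\setminus V(2)$, $X_2=\Spec{R}\setminus V(3)$, $\q_1=(3)$, $\q_2=(2)$. Then $\bigcap_i\q_i=(6)$ contains the product $(2)(3)$, so your (product-closed) $\Sigma$ is empty; and indeed no prime containing $(6)$ lies in $X_1\cap X_2=\Spec{R}\setminus V(6)$.

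In fairness, the paper's own proof contains the identical flaw --- its assertion that ``clearly'' $\bigcap_{i\in I}\q_i$ contains no $J\in\mathcal{J}$ is only clear \emph{before} $\mathcal{J}$ is closed under products --- so you have faithfully reproduced the intended argument, gap included. The argument does go through, by exactly your reasoning, under the additional hypothesis that some prime $\p_0\in\bigcap_{i\in I}X_i$ contains $\bigcap_{i\in I}\q_i$: then $\p_0\in\Sigma$ even for the product-closed family (a prime containing a product of ideals contains a factor), Zorn applies, and maximality in $\Sigma$ gives maximality in $\bigcap_{i\in I}X_i$. This covers the ``in particular'' clause (take all $\q_i$ equal) and is the form in which the lemma can safely be quoted in the first step of the proof of Lemma~\ref{Tho-loc}; the second application there requires a separate justification that such a $\p_0$ exists.
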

\begin{proof}
		Since $\bigcap_{i \in I}X_i$ is a Thomason closed set, its complement $Z=\Spec{R} \setminus \bigcap_{i \in I}X_i$ is Thomason, and thus $Z=\bigcup_{J \in \mathcal{J}}V(J)$ for some set $\mathcal{J}$ of finitely generated ideals of $J$. We can also assume without loss of generality that $\mathcal{J}$ is closed under ideal product, as replacing it by its closure under products does not alter the induced Zariski closed subset of the spectrum. Clearly, the ideal $\bigcap_{i \in I}\q_i$ does not contain $J$ for any $J \in \mathcal{J}$. Since the ideals of $\mathcal{J}$ are finitely generated, we can use the Zorn Lemma to find a maximal ideal $\q$ containing $\bigcap_{i \in I}\q_i$ such that $J \not\subseteq \q$ for any $J \in \mathcal{J}$. As $\mathcal{J}$ is closed under products, it is easy to check that $\q$ is a prime ideal.
\end{proof}

Now we can establish one of the key lemmas in our attempt to prove descent for tilting classes.

\begin{lemma}\label{Tho-loc}
		Let $\varphi: R \rightarrow S$ be a faithfully flat ring homomorphism. Then $\varphi$ is Thomason closed, i.e., given any Thomason closed subset $X \subseteq \Spec{S}$, the image $\varphi^*[X]$ is Thomason closed in $\Spec{R}$.
\end{lemma}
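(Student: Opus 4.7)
My approach is to verify that $\varphi^*[X]$ satisfies two properties that together characterize Thomason closed subsets of $\Spec{R}$: pro-constructibility (closedness in the constructible topology) and closure under generalization. This characterization is a standard consequence of Hochster's duality---Thomason closed subsets of $\Spec{R}$ are precisely the closed subsets of the Hochster dual spectral space, and closed subsets of any spectral space are the pro-constructible sets closed under specialization; since Hochster duality reverses the specialization order, this becomes pro-constructibility plus generalization-closedness in the original Zariski topology.

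For pro-constructibility, I would use that the constructible topology on any spectral space is compact Hausdorff and that $\varphi^*:\Spec{S}\to\Spec{R}$ is continuous for it: the preimages of the basic constructibles $V(I)$ and $\Spec R\setminus V(I)$ (for finitely generated $I\subseteq R$) are $V(IS)$ and $\Spec S\setminus V(IS)$, constructible in $\Spec{S}$ because $IS$ is finitely generated. Since $X$ is Thomason closed and hence constructibly compact, its continuous image $\varphi^*[X]$ is constructibly compact, therefore closed in the Hausdorff constructible topology on $\Spec{R}$. For generalization-closedness I would use the going-down property of the flat map $\varphi$: if $\p\subseteq\p'$ in $\Spec R$ and $\p'=\q'\cap R$ for some $\q'\in X$, then going-down provides $\q\subseteq\q'$ with $\q\cap R=\p$, and since $X$ is an intersection of Zariski opens it is closed under generalization in $\Spec{S}$, so $\q\in X$ and thus $\p\in\varphi^*[X]$.

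The main obstacle I anticipate is the precise invocation of the characterization of Thomason closed sets, which is not explicitly set up in the paper although it follows from standard spectral-space theory. A more self-contained alternative, staying closer to the paper's toolkit, is to write $X=\bigcap_{j\in J}U_j$ with $U_j$ quasi-compact Zariski open and prove the equality $\varphi^*[X]=\bigcap_{j\in J}\varphi^*[U_j]$: given $\p\in\bigcap_j\varphi^*[U_j]$, pick $\q_j\in U_j$ above $\p$ for each $j$, apply Lemma~\ref{Tho-max} in $\Spec{S}$ to the Thomason closed family $\{U_j\}$ and the choices $\q_j$ to obtain a maximal $\q\in X$ with $\bigcap_j\q_j\subseteq\q$, then use going-down to produce $\q^*\subseteq\q$ with $\q^*\cap R=\p$; the prime $\q^*$ automatically lies in the generalization-closed set $X$, so $\p\in\varphi^*[X]$. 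Each $\varphi^*[U_j]$ is separately Thomason closed by the pro-constructibility-plus-generalization argument, and consequently the intersection $\varphi^*[X]$ is Thomason closed.
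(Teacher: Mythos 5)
Your proposal is correct, and your primary argument takes a genuinely different route from the paper's. The paper never invokes the constructible topology explicitly: for a quasi-compact open $X\subseteq\Spec{S}$ it shows directly that $Y=\varphi^*[X]$ is a Zariski quasi-compact lower set and then, by hand, that every $\q\notin Y$ is separated from $Y$ by a single quasi-compact open (covering $Y$ by quasi-compact opens missing $\q$ and extracting a finite subcover), so that $Y$ equals the intersection of the quasi-compact opens containing it; the general case is then reduced to this one via Lemma~\ref{Tho-max}, exactly as in your second variant, except that the paper restricts attention to \emph{maximal} elements $\p$ of $\bigcap_i\varphi^*[X_i]$ and concludes $\varphi^*(\q)=\p$ by maximality, whereas you descend from $\q\cap R\supseteq\p$ to a prime over $\p$ by going-down --- a harmless and if anything slightly cleaner variation, since it works for arbitrary $\p$. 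Your first argument (constructible compactness of $X$, constructible continuity of $\varphi^*$, image of compact in Hausdorff is closed, plus going-down for generalization-stability) bypasses Lemma~\ref{Tho-max} entirely and is more conceptual; its only cost is the standard spectral-space fact that Thomason closed sets are exactly the pro-constructible generalization-stable sets, which the paper does not set up but which is classical (Hochster, or \cite[Tag 0903]{SP}). Both of your variants are complete; just be sure to cite that characterization explicitly if you use the first one, since it is the only ingredient not already available in the paper.
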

\begin{proof}
		First, we handle the case when $X$ is a basic Thomason closed set, that is, let $X$ be an Zariski open and quasi-compact subset of $\Spec{S}$. By Lemma~\ref{faithful}, the image $Y=\varphi^*[X]$ is a lower subset of $\Spec{R}$ with respect to inclusion. Also, since $\varphi^*$ is Zariski continuous, $Y$ is Zariski quasi-compact. We show that $Y$ is Thomason closed. To this end, we let
\[ Y'=\bigcap\{U \subseteq \Spec{R} \mid U \text{ is Zariski open and quasi-compact, and } Y \subseteq U\}.\]
Clearly $Y'$ is Thomason closed, and $Y \subseteq Y'$. We prove $Y=Y'$.

		Let $\q \in \Spec{R} \setminus Y$. Because $Y$ is a lower set in $\Spec{R}$, it is an intersection of Zariski open sets. Therefore, there is an open set containing $Y$, but not $\q$. Because the quasi-compact open sets form an open base of the Zariski topology,
there is for any $\p \in Y$ a quasi-compact open set $O_\p$ such that $\p \in O_\p$, and $\q \not\in O_\p$. The collection $\{O_\p \mid \p \in Y\}$ forms an open covering of $Y$. By the quasi-compactness of $Y$, there are primes $\p_1,\ldots,\p_n$ such that $Y \subseteq O = \bigcup_{i=1}^n O_{\p_i}$. Then $O$ is a quasi-compact open set containing $Y$, but not $\q$. Therefore, $\q \not\in Y'$. This proves $Y=Y'$, and thus also that $Y$ is Thomason closed.

		Let now $X$ be any Thomason closed subset of $\Spec{S}$. Then $X$ is expressible as an intersection of quasi-compact Zariski open sets, say $X=\bigcap_{i \in I}X_i$. We let $Y=\varphi^*[X]$, and $Y_i=\varphi^*[X_i]$. The situation is as follows: $Y \subseteq \bigcap_{i \in I} Y_i$, and each $Y_i$ is Thomason closed by the first part of the proof. We prove that $Y=\bigcap_{i \in I} Y_i$.

		Since $Y$ is a lower set, and any element of $\bigcap_{i \in I} Y_i$ is contained in some of its maximal elements by Lemma~\ref{Tho-max}, it is enough to show that any maximal element $\p$ of $\bigcap_{i \in I} Y_i$ is contained in $Y$. Let us fix $\p$ and, for each $i \in I$, let $\q_i \in X_i$ be such that $\p=\varphi^*(\q_i)=\q_i \cap R$. By Lemma~\ref{Tho-max}, there is a maximal element $\q$ of $X$ such that $\bigcap_{i \in I}\q_i \subseteq \q$. Then we have
		$$\varphi^*(\q)=\q \cap R \supseteq \bigcap_{i \in I}\q_i \cap R = \p.$$
		On the other hand, $\varphi^*(\q) \in \bigcap_{i \in I}Y_i$, and $\p$ is a maximal element of this set. Therefore, $\varphi^*(\q)=\p$, proving that $\p \in Y$. This concludes the proof.
\end{proof}

\begin{proposition}\label{descends}
Let $R$ be a commutative ring and $n < \omega$. Let $\varphi\colon R \to S$ be a faithfully flat ring homomorphism. 

Let $T^\prime$ be an $n$-tilting $S$-module of the form  $T^\prime = \tilde T \otimes _R S$ for some module $\tilde T \in \rmod R$. Let $(\mathcal A ^\prime, \mathcal B ^\prime)$ be the corresponding $n$-tilting cotorsion pair in $\rmod S$.  
 
\begin{enumerate}
\item Let $\bar Q = (Q_0,\dots,Q_{n-1})$ be the characteristic sequence of length $n$ in $\Spec S$ corresponding to the right $n$-tilting class $\mathcal B ^\prime$. Then $\bar Q = \bar Q _{\bar P}$ (in the sense of Lemma~\ref{partic2}) for a characteristic sequence $\bar P = (P_0,\dots,P_{n-1})$ of length $n$ in $\Spec R$. In particular, there is an $n$-tilting module $T \in \rmod R$ such that $T \otimes _R S$ is equivalent to $T^\prime$. 
\item Let $(\mathcal A, \mathcal B)$ be the $n$-tilting cotorsion pair corresponding to the $n$-tilting $R$-module $T$ from part (1). Let $M \in \rmod R$ be such that $M^\prime = M \otimes _R S \in \mathcal A ^\prime$ ($M^\prime = M \otimes _R S \in \mathcal B ^\prime$, or $M^\prime \in \Add (T^\prime )$). Then $M \in \mathcal A$ ($M \in \mathcal B$, or $M \in \Add (T)$, respectively).             
\end{enumerate}
\end{proposition}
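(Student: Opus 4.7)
The plan is to derive part (1) from the Tor characterization of the characteristic sequence (Proposition~\ref{characteristic2}) combined with the Thomason-closedness of $\varphi^*$ (Lemma~\ref{Tho-loc}), and then deduce part (2) formally from the ascent Proposition~\ref{ascends}.

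For part (1), I would first apply Proposition~\ref{characteristic2} to $T' = \tilde T \otimes_R S$ and show that each $Q_i$ is saturated under $\varphi^*$, i.e.\ $Q_i = (\varphi^*)^{-1}(\varphi^*[Q_i])$. For $\q \in \Spec S$ with $\p = \q \cap R$, starting from a flat resolution of $\tilde T$ over $R$ and combining Lemma~\ref{localize}(3) with the change-of-rings identification $\kappa(\q) \otimes_R - \cong \kappa(\q) \otimes_{\kappa(\p)} (\kappa(\p) \otimes_R -)$, I would compute
$$\Tor{i}{S}{\kappa(\q)}{T'} \cong \Tor{i}{R}{\kappa(\q)}{\tilde T} \cong \kappa(\q) \otimes_{\kappa(\p)} \Tor{i}{R}{\kappa(\p)}{\tilde T}.$$
Since $\kappa(\p) \to \kappa(\q)$ is a faithfully flat field extension, the vanishing of $\Tor{i}{S}{\kappa(\q)}{T'}$ depends only on $\p$, so induction on $i$ using the recursion of Proposition~\ref{characteristic2} yields the saturation of each $Q_i$.

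Next I would set $P_i := \varphi^*[Q_i]$. Saturation and surjectivity of $\varphi^*$ (Lemma~\ref{faithful}) give $\Spec R \setminus P_i = \varphi^*[\Spec S \setminus Q_i]$; since $\Spec S \setminus Q_i$ is Thomason closed, Lemma~\ref{Tho-loc} shows $P_i$ is Thomason. The chain $P_0 \supseteq \dots \supseteq P_{n-1}$ is inherited from $\bar Q$. To verify the grade condition, given a finitely generated ideal $I \subseteq R$ with $V(I) \subseteq P_i$, one has $V(IS) \subseteq Q_i$, so the grade condition on $\bar Q$ gives $H_S^j(IS;S) = 0$ for $j < i$; Lemma~\ref{koszul-faith} and faithful flatness then yield $H_R^j(I;R) = 0$, so $\bar P$ is characteristic by Lemma~\ref{gradelemma}. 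Theorem~\ref{comnoe} produces an $n$-tilting $T \in \rmod R$ inducing $\mathcal T_{\bar P}$, and by Lemma~\ref{partic2}(ii) the base change $T \otimes_R S$ is $n$-tilting with right tilting class $\mathcal T_{\bar Q_{\bar P}} = \mathcal T_{\bar Q} = \mathcal B'$; hence $T \otimes_R S$ is equivalent to $T'$.

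For part (2), the cotorsion pair $(\mathcal A, \mathcal B)$ of $T$ is induced by the set $\mathcal S$ of strongly finitely presented modules in $\mathcal A$, and by Lemma~\ref{tilting ascent}(2) the cotorsion pair $(\mathcal A', \mathcal B')$ of $T'$ coincides with the one induced by $\mathcal S \otimes_R S$. Parts (2) and (1) of Proposition~\ref{ascends} then immediately supply the descent statements $M \otimes_R S \in \mathcal A' \Rightarrow M \in \mathcal A$ and $M \otimes_R S \in \mathcal B' \Rightarrow M \in \mathcal B$; the $\Add(T)$-case follows by intersecting the two, using $\Add(T) = \mathcal A \cap \mathcal B$ and $\Add(T') = \mathcal A' \cap \mathcal B'$. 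The main obstacle is the saturation step in part (1), which is exactly what forces the hypothesis that $T'$ be a base change from $R$---the Tor computation relies on having a flat resolution of $\tilde T$ over $R$---and which explains why unrestricted descent for $n$-tilting modules is delicate.
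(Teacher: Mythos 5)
Your proposal is correct and follows essentially the same route as the paper: the fiberwise saturation of each $Q_i$ via the base-change computation of $\Tor{i}{S}{\kappa(\q)}{T'}$ and induction using Proposition~\ref{characteristic2}, the Thomason property of $P_i=\varphi^*[Q_i]$ via Lemma~\ref{Tho-loc}, the grade condition via Lemma~\ref{koszul-faith}, and part (2) as a formal consequence of Proposition~\ref{ascends} and Lemma~\ref{tilting ascent}. (Only a trivial index slip: the grade condition transfers as $H^j(I;R)=0$ for $j\leq i$, not $j<i$.)
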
          
\begin{proof} 
		(1) First, we prove that if $i < n$ and $\q \in Q_i$, then $\q^\prime \in Q_i$ for each $\q^\prime \in \Spec S$ such that $\q' \cap R = \q \cap R$ (i.e.\ that each $Q_i$ is a union of fibers of the map $\varphi^*\colon \Spec S \to \Spec R$). 	
		
		Let $\q \in \Spec{S}$ and denote $\p = \q \cap R \in \Spec{R}$. The inclusion $R/\p \xhookrightarrow{} S/\q$ induces a field extension $\kappa(\p) \xhookrightarrow{} \kappa(\q)$ of the quotient fields of the domains $R/\p$ and $S/\q$, respectively. 
		Now we can compute:
\begin{align*}
\Tor{i}{S}{\kappa(\q)}{T'} &\cong H_i(\kappa(\q) \otimes_S^\mathbb{L} T') \\
&\cong H_i(\kappa(\q) \otimes_S^\mathbb{L} (\tilde{T} \otimes_R S)) \\
&\cong H_i((\kappa(\q) \otimes_S S) \otimes_R^\mathbb{L} \tilde{T}) \\
&\cong H_i(\kappa(\q) \otimes_R^\mathbb{L} \tilde{T}) \\
&\cong H_i( (\kappa(\p) \otimes_{\kappa(\p)} \kappa(\q)) \otimes_R^\mathbb{L} \tilde{T}) \\
&\cong H_i( (\kappa(\p) \otimes_R^\mathbb{L} \tilde{T}) \otimes_{\kappa(\p)} \kappa(\q)) \\
&\cong H_i(\kappa(\p) \otimes_R^\mathbb{L} \tilde{T}) \otimes_{\kappa(\p)} \kappa(\q) \\
&\cong \Tor{i}{R}{\kappa(\p)}{\tilde{T}} \otimes_{\kappa(\p)} \kappa(\q).
\end{align*}
		Since field extensions are faithfully flat, we conclude that $\Tor{i}{S}{\kappa(\q)}{T'} =0$ if and only if $\Tor{i}{R}{\kappa(\p)}{\tilde{T}}=0$ for any $\q \in \Spec{S}$ such that $\p=\q \cap R$. From this, it is straightforward to deduce by induction on $i \geq 0$ and using Proposition~\ref{characteristic2} that given any $\q \in Q_i$, the condition $\q \cap R = \q' \cap R$ implies $\q' \in Q_i$ for any $\q' \in \Spec{S}$.

		Now we can prove that the sequence $(P_0,P_1,\ldots,P_{n-1})$, defined by setting $P_i=\{\q \cap R \mid \q \in Q_i\}$ for $i<n$, is characteristic. First we show that $P_i$ is a Thomason subset of $\Spec{R}$ for each $i<n$. By the previous observation, we see that $\varphi^*[\Spec{S} \setminus P_i] = \Spec{R} \setminus \varphi^*[P_i]$. Since $\varphi^*$ preserves Thomason closed sets by Lemma~\ref{Tho-loc}, we infer that $Q_i=\varphi^*[P_i]$ is a Thomason subset of $\Spec{R}$.

		If $\q' \in \Spec{S}$ is such that $\p S \subseteq \q'$ for some $\p \in P_i$, then $\q' \cap R \in P_i$, implying $\q' \in Q_i$. This shows that $\bar{Q}_{\bar{P}}=\bar{Q}$. Now we can check conditions (i) and (ii) from Definition~\ref{defchar}, the first one being obvious. To show the second one, let $\mathcal{I}_i$ be a collection of finitely generated ideals such that $P_i=\bigcup_{I \in \mathcal{I}_i}V(I)$, provided by $P_i$ being a Thomason set. Since $\bar{Q}_{\bar{P}}=\bar{Q}$, the proof of Lemma~\ref{partic2}(1) shows that $Q_i=\bigcup_{I \in \mathcal{I}_i}V(SI)$. Now we apply Lemma~\ref{koszul-faith}.

		Finally, let $T$ be the $n$-tilting $R$-module corresponding to $\bar P$ by Theorem \ref{comnoe}. By Lemma \ref{partic2}(ii), $\bar Q$ is the characteristic sequence corresponding to the $n$-tilting $S$-module $T \otimes _R S$, so the latter module is equivalent to $T^\prime$, i.e.\ $\Add (T') = \Add (T\otimes_RS)$.

(2) The first case follows from part (2) of Proposition~\ref{ascends}, the second from its (1), and the third by both parts (1) and (2) and the fact that $\Add (T) = \mathcal A \cap \mathcal B$.
\end{proof}

As an immediate consequence, we obtain a counterpart of Corollary~\ref{acl1} for the descent. The descent for tilting modules will be treated in detail in the last section.

\begin{corollary} \label{main_lk} The properties of being a left $n$-tilting, and a kernel $n$-tilting module are ad-properties. In particular, the notions of a locally left $n$-tilting, and a locally kernel $n$-tilting quasi-coherent sheaf are Zariski local for all schemes.
\end{corollary}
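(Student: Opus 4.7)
The corollary combines ascent, immediate from Corollary~\ref{acl1}, with descent along faithfully flat ring homomorphisms, to be extracted from Proposition~\ref{descends}. Once the ad-property is in place for both properties, the Zariski locality of the corresponding global notions for quasi-coherent sheaves on any scheme follows from the Affine Communication Lemma (Lemma~\ref{acl}).

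For the descent, let $\varphi\colon R \to S$ be faithfully flat and let $M \in \rmod{R}$ be such that $M \otimes_R S$ lies in the left class (respectively, in the kernel) of some $n$-tilting cotorsion pair $(\mathcal A', \mathcal B')$ on $\rmod{S}$, witnessed by an $n$-tilting $S$-module $T'$. The goal is to produce an $n$-tilting $R$-module $T$ such that $T \otimes_R S$ is equivalent to $T'$; Proposition~\ref{descends}(2), applied to this $T$ and its induced cotorsion pair, then directly gives $M \in \mathcal A$ (respectively $M \in \Add(T)$).

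The production of $T$ is the job of Proposition~\ref{descends}(1), whose hypothesis demands that $T'$ be of the form $\tilde T \otimes_R S$ for some $\tilde T \in \rmod{R}$. The plan is to reduce to this situation by exploiting the presence of $M$. In the kernel case, $M \otimes_R S$ itself realises the form $\tilde T \otimes_R S$ with $\tilde T = M$, and the base-change identity $\Tor{j}{S}{\kappa(\q)}{M \otimes_R S} \cong \Tor{j}{R}{\kappa(\p)}{M} \otimes_{\kappa(\p)} \kappa(\q)$ (for $\p = \q \cap R$) from the proof of Proposition~\ref{descends}(1) shows that vanishing of these $\Tor$-groups depends only on $\p$. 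Combined with the uniqueness of the nonzero $\Tor$-index for $T'$ at each residue field $\kappa(\q)$, supplied by Proposition~\ref{characteristic2}, this should force the characteristic sequence $\bar Q$ of $T'$ to be a union of fibres of $\varphi^*\colon \Spec{S} \to \Spec{R}$, and hence of the form $\bar Q_{\bar P}$ for some characteristic sequence $\bar P$ on $\Spec{R}$ in the sense of Lemma~\ref{partic2}. The $n$-tilting $R$-module $T$ corresponding to $\bar P$ via Theorem~\ref{comnoe} then satisfies $T \otimes_R S \sim T'$, as desired. The left $n$-tilting case is handled by the analogous argument, applied this time to the set of strongly finitely presented modules witnessing the finite type of $\mathcal A'$.

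The main obstacle is precisely that $M \otimes_R S$ is only a summand of some power $(T')^{(\kappa)}$: vanishing of $\Tor{j}{S}{\kappa(\q)}{M \otimes_R S}$ merely follows from, but does not imply, vanishing of $\Tor{j}{S}{\kappa(\q)}{T'}$. Leveraging the rigidity of Tor at residue fields of tilting modules (Proposition~\ref{characteristic2}) to bridge this gap is the delicate point of the argument, and it is what allows us to stay within the weaker ``left'' and ``kernel'' tilting classes; the full tilting descent demands further hypotheses, treated in later sections.
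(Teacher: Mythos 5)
Your overall strategy is the paper's: ascent is Corollary~\ref{acl1}, descent is meant to come from Proposition~\ref{descends}, and Lemma~\ref{acl} converts the ad-property into Zariski locality (the paper itself states the corollary as an immediate consequence of Proposition~\ref{descends}). You have also correctly isolated the one nontrivial point, namely that Proposition~\ref{descends} only applies when the witnessing tilting $S$-module $T'$ is of the form $\tilde T\otimes_RS$. The gap is that your proposed reduction to this situation does not work: the claim that the presence of the extended module $M\otimes_RS$ in $\mathcal A'$ (or in $\Add(T')$) ``forces the characteristic sequence $\bar Q$ of $T'$ to be a union of fibres of $\varphi^*$'' is false, not merely unproven. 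For example, let $R=\mathbb Z$, let $\varphi\colon\mathbb Z\to S=\mathbb Z\times\mathbb Z$ be the (faithfully flat) diagonal embedding, and let $T'=(T_1,\mathbb Z)$, where $T_1$ is the $1$-tilting $\mathbb Z$-module corresponding to the characteristic sequence $(V(p))$. Then $T'$ is a $1$-tilting $S$-module whose characteristic sequence $Q_0=V(p)\sqcup\emptyset$ contains exactly one of the two points of the fibre of $\varphi^*$ over $(p)$; nevertheless $M=\mathbb Z$ satisfies $M\otimes_RS\in\mathcal A_{T'}$ (and $M=0$ satisfies $M\otimes_RS\in\Add(T')$). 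As you yourself observe, membership of $M\otimes_RS$ in $\mathcal A_{T'}$ or $\Add(T')$ only bounds the Tor-support of $M\otimes_RS$ at residue fields of $S$ by that of $T'$ and says nothing about $T'$ away from the support of $M$, so Proposition~\ref{characteristic2} cannot recover $\bar Q$ from $M$ alone.

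What descent actually requires is different: one must show that whenever $M\otimes_RS$ lies in the left class (resp.\ the kernel) of \emph{some} $n$-tilting cotorsion pair over $S$, it already lies in the left class (resp.\ the kernel) of one induced by an \emph{extended} tilting module $\tilde T\otimes_RS$ --- that is, one is allowed to replace $T'$, rather than prove that the given $T'$ is fibrewise --- and only then do Proposition~\ref{descends}(1) and Proposition~\ref{ascends} finish the argument. (In the example above, $\mathbb Z\otimes_RS$ lies in the left class of the extended $1$-tilting module $S$ itself, so the corollary is not threatened.) Your proposal stops exactly at this replacement step, which carries the real mathematical content of the descent; as written, the descent half of the corollary is not established.
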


\section{Descent of tilting modules}
\label{sec:descent}

Let us quickly sum up what we have proved so far. Let $\varphi: R \rightarrow S$ be a faithfully flat ring homomorphism and $\tilde{T}$ an $R$-module such that $T'=\tilde{T}\otimes_RS$ is an $n$-tilting $S$-module. Then Proposition~\ref{descends} shows that conditions (T1) and (T2) of Definition~\ref{tilt} are satisfied by the $R$-module $\tilde{T}$. It remains to prove the descent of condition of (T3) in the presence of (T1) and (T2). In this section, we show that this holds in the following cases:

\begin{enumerate}
		\item if $\varphi$ is a Zariski covering, that is, $\varphi$ is of the form as in Lemma \ref{acl}(ii) (see Theorem~\ref{main} below),
		\item if $T'$ is 1-tilting (Theorem~\ref{main-1dim}),
		\item if $R$ is noetherian (Corollary~\ref{main-noetherian}), and
		\item if the $n$-tilting $S$-module $T'$ corresponds to a characteristic sequence consisting of basic Thomason sets (see Definition~\ref{basic} and Theorem~\ref{main-basic}).
\end{enumerate}

For this purpose, it is convenient to replace (T3) in the definition of a tilting module by a condition asserting that $T$ is a (weak) generator in the unbounded derived category $\mathbf{D}(R)$ of $R$-modules. It is proved in \cite[Corollary 2.6]{PS} that, given a module $T$ and assuming conditions (T1) and (T2), the condition (T3) can be equivalently replaced by.
\begin{enumerate}
		\item[\rm{(T3')}] For any $X \in \mathbf{D}(R)$, we have $\mathbb{R}\!\Hom{R}{T}{X}=0 \implies X=0$.
\end{enumerate}
	Dually, by \cite[Remark 3.7]{PS}, we can also replace the condition (C3) in the definition of an $n$-cotilting module $C$ by
\begin{enumerate}
		\item[\rm{(C3')}] For any $X \in \mathbf{D}(R)$, we have $\mathbb{R}\!\Hom{R}{X}{C}=0 \implies X=0$.
\end{enumerate}

\noindent
	We further recall that a triangulated subcategory of $\mathbf{D}(R)$ is \emph{localizing (resp. colocalizing)} if it is closed and arbitrary direct sums (resp. direct products). Given a class $\mathcal{C} \subseteq \mathbf{D}(R)$, we denote by $\Loc(\mathcal{C})$ (resp. $\Coloc(\mathcal{C})$) the smallest localizing (resp. colocalizing) subcategory containing $\mathcal{C}$. Note that, using the properties of $\mathbb{R}\!\Hom{R}{-}{X}$, condition (T3') is implied by $\Loc(T)=\mathbf{D}(R)$. On the other hand, condition (T3) implies that $R \in \Loc(T)$, showing that the following condition can be also equivalently used in place of (T3),
\begin{enumerate}
		\item[\rm{(T3'')}] $\Loc(T)=\mathbf{D}(R)$.
\end{enumerate}
Dually, we can replace the condition (C3) by
\begin{enumerate}
		\item[\rm{(C3'')}] $\Coloc(C)=\mathbf{D}(R)$.
\end{enumerate}

Using \v{C}ech complexes, we check (T3'') directly in the case when the faithfully flat ring homomorphism comes from a Zariski open covering of $\Spec R$, i.e.\ it is of the form as in the paragraph after Lemma \ref{acl}:
\begin{lemma}\label{T3}
Let $R$ be a commutative ring. Let $R = \sum_{i < m} f_iR$ and
$$ \varphi = \varphi_{f_0,...,f_{m-1}}\colon R \to S = \prod_{i < m} R[f_i^{-1}] $$
be the faithfully flat ring morphism, where the components $R\to R[f_i^{-1}]$ are the localization morphisms.
		Let $\tilde{T}$ be an $R$-module satisfying conditions (T1) and (T2) of Definition \ref{tilt} such that the $S$-module $\tilde{T}\otimes _R S$ is $n$-tilting. Then $\tilde T$ is an $n$-tilting $R$-module.
\end{lemma}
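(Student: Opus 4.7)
The plan is to establish condition (T3''), namely that the localizing subcategory $\Loc(\tilde{T})$ of $\mathbf{D}(R)$ generated by $\tilde T$ equals all of $\mathbf{D}(R)$. Together with the hypotheses (T1) and (T2), this is equivalent to (T3) by the discussion preceding the lemma, so it implies that $\tilde T$ is $n$-tilting. Since $R$ generates $\mathbf{D}(R)$ as a localizing subcategory, it suffices to prove that $R \in \Loc(\tilde T)$.

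First I show that $\tilde T \otimes_R S \in \Loc(\tilde T)$. As $m$ is finite, $S = \bigoplus_{i<m} R[f_i^{-1}]$ as an $R$-module, hence $\tilde T \otimes_R S = \bigoplus_{i<m} \tilde T[f_i^{-1}]$. Each summand $\tilde T[f_i^{-1}]$ is the sequential direct limit, and therefore the homotopy colimit in $\mathbf{D}(R)$, of $\tilde T \xrightarrow{f_i} \tilde T \xrightarrow{f_i} \cdots$, so it belongs to $\Loc(\tilde T)$. By hypothesis, $\tilde T \otimes_R S$ is $n$-tilting over $S$, so condition (T3'') holds over $S$: the localizing subcategory of $\mathbf{D}(S)$ generated by $\tilde T \otimes_R S$ is all of $\mathbf{D}(S)$. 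Restriction of scalars along $\varphi$ is a triangulated functor $\mathbf{D}(S) \to \mathbf{D}(R)$ preserving arbitrary direct sums, and it sends $\tilde T \otimes_R S$ into $\Loc(\tilde T)$; a standard devissage then shows that it sends the whole of $\mathbf{D}(S)$ into $\Loc(\tilde T)$. Consequently, every complex of $S$-modules, viewed as an $R$-complex, lies in $\Loc(\tilde T)$.

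Next I invoke the Čech resolution of $R$. For each nonempty subset $I \subseteq \{0,\dots,m-1\}$, the localization $R_I := R[f_i^{-1} : i \in I]$ is naturally an $S$-module (via the composition $S \twoheadrightarrow R[f_j^{-1}] \to R_I$ for any $j \in I$), hence $R_I \in \Loc(\tilde T)$ by the previous step. Since $(f_0,\dots,f_{m-1}) = R$, the augmented Čech complex
\[ 0 \to R \to \prod_{i<m} R[f_i^{-1}] \to \prod_{i<j<m} R[f_i^{-1}, f_j^{-1}] \to \cdots \to R[f_0^{-1},\dots,f_{m-1}^{-1}] \to 0 \]
is exact, so $R$ is quasi-isomorphic in $\mathbf{D}(R)$ to the bounded complex $K^\bullet$ obtained by deleting the initial $R$. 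Every term of $K^\bullet$ is a finite direct sum of modules of the form $R_I$, and therefore lies in $\Loc(\tilde T)$; since $K^\bullet$ is bounded, it is built from its terms by finitely many shifts and mapping cones, so $K^\bullet \in \Loc(\tilde T)$, whence $R \simeq K^\bullet \in \Loc(\tilde T)$, establishing (T3'').

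The only genuinely delicate point is the essential use of the finiteness of $m$: it makes $S$ a finite direct sum of localizations (so that $\tilde T \otimes_R S$ splits into a finite sum of homotopy colimits of $\tilde T$), each $R_I$ a quotient ring of $S$, and the Čech complex a finite resolution. I do not foresee other serious obstacles.
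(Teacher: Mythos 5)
Your proof is correct and follows essentially the same route as the paper: both establish (T3'') by showing that each iterated localization $R[f_{j_1}^{-1},\dots,f_{j_k}^{-1}]$ lies in $\Loc(\tilde T)$ and then realizing $R$ as the truncated \v{C}ech complex on $f_0,\dots,f_{m-1}$, which is quasi-isomorphic to $R$ because these elements generate the unit ideal. The only cosmetic difference is that you justify $\tilde T\otimes_R S\in\Loc(\tilde T)$ via homotopy colimits and then run a devissage through restriction of scalars, whereas the paper invokes the fact that localizing subcategories of $\mathbf{D}(R)$ are tensor ideals; both arguments are valid.
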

\begin{proof} 
		We consider the \v{C}ech complex $\check{C}_\bullet(f_0,f_1,\ldots,f_{m-1})$. Explicitly, this complex has the following form:
		$$R \rightarrow \bigoplus_{j<m}R[f_j^{-1}] \rightarrow \bigoplus_{j<j'<m}R[f_j^{-1},f_{j'}^{-1}] \rightarrow \cdots \rightarrow R[f_0^{-1},f_1^{-1},\ldots,f_{m-1}^{-1}].$$
		Since the quasi-isomorphism class of a \v{C}ech complex does only depend on the ideal generated by the elements $f_0,f_1,\ldots,f_{m-1}$ (see e.g. \cite[Corollary 3.12]{Gr}), $\check{C}_\bullet(f_0,f_1,\ldots,f_{m-1})$ is quasi-isomorphic to the exact complex $\check{C}_\bullet(1) = (0 \rightarrow R \overset{\cong}\rightarrow R \rightarrow 0)$, and thus is exact.

		Now consider the truncated \v{C}ech complex $\check{C}'_\bullet(f_0,f_1,\ldots,f_{m-1})$:
		$$\bigoplus_{j<m}R[f_j^{-1}] \rightarrow \bigoplus_{j<j'<m}R[f_j^{-1},f_{j'}^{-1}] \rightarrow \cdots \rightarrow R[f_0^{-1},f_1^{-1},\ldots,f_{m-1}^{-1}].$$
		Using the exactness of $\check{C}_\bullet(f_0,f_1,\ldots,f_{m-1})$, the map $R \rightarrow \bigoplus_{j<m}R[f_j^{-1}]$ induces a quasi-isomorphism $R \cong \check{C}'_\bullet(f_0,f_1,\ldots,f_{m-1})$. 

		The proof is concluded by showing that $\check{C}'_\bullet(f_0,f_1,\ldots,f_{m-1}) \in \Loc(\tilde T)$. By \cite[Lemma 1.1.8]{KP}, any localizing subcategory is a tensor ideal, i.e.\ it is closed under tensoring by any object of $\mathbf{D}(R)$. Therefore, $\Loc(\tilde T \otimes_R S) \subseteq \Loc(\tilde T)$. Since $\tilde T \otimes_R S$ is a tilting $S$-module, we have $S \in \Loc(\tilde T \otimes_R S)$, and thus $R[f_j^{-1}] \in \Loc(\tilde T)$ for any $j<m$. Then also $R[f_{j_1}^{-1},f_{j_2}^{-1},\ldots,f_{j_k}^{-1}] \in \Loc(\tilde T)$ for any $j_1 < j_2 < \cdots <j_k <m$, proving finally that $R \cong \check{C}'_\bullet(f_0,f_1,\ldots,f_{m-1}) \in \Loc(\tilde T)$, and therefore $\Loc(\tilde T) = \mathbf{D}(R)$.
\end{proof}

Together with Corollary \ref{acl1} and Proposition~\ref{descends}, this yields the desired result on Zariski locality for arbitrary schemes. 

\begin{theorem} \label{main} The notion of locally $n$-tilting quasi-coherent sheaf is Zariski local for all schemes.
\end{theorem}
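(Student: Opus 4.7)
The plan is to apply the Affine Communication Lemma (Lemma~\ref{acl}) with $\mathfrak P _R$ being the property that $M$ is an $n$-tilting $R$-module. The ascent requirement along each localization $R \to R[f^{-1}]$ is already supplied by Corollary~\ref{acl1}, so the entire work concentrates on the descent half: if $R = \sum_{j<m} f_j R$ and each $M[f_j^{-1}]$ is $n$-tilting over $R[f_j^{-1}]$, one must show $M$ itself is $n$-tilting over $R$.

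Set $S = \prod_{j<m} R[f_j^{-1}]$ and $\varphi = \varphi_{f_0,\ldots,f_{m-1}}\colon R\to S$, which is faithfully flat. Since tensoring commutes with finite products in this setting and a finite product of $n$-tilting modules over a finite product of rings is plainly $n$-tilting, $M\otimes_R S \cong \prod_{j<m} M[f_j^{-1}]$ is an $n$-tilting $S$-module. This places us in the hypothesis of Proposition~\ref{descends} with $\tilde T = M$ and $T^\prime = M\otimes_RS$. I would then invoke Proposition~\ref{descends}(1) to obtain an $n$-tilting $R$-module $T$ such that $T \otimes_R S$ is equivalent to $M \otimes_R S$, so that $\Add(T\otimes_RS) = \Add(M\otimes_RS)$. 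In particular, $M \otimes_R S \in \Add(T\otimes_RS)$, and the third clause of Proposition~\ref{descends}(2) then forces $M \in \Add(T)$. Because $\Add(T) = \mathcal A \cap \mathcal B$ is the kernel of the tilting cotorsion pair induced by $T$, the module $M$ inherits the homological conditions (T1) and (T2) of Definition~\ref{tilt}.

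It remains to secure condition (T3) for $M$, and this is precisely the role of Lemma~\ref{T3}: for the specific faithfully flat morphism arising from the Zariski covering $R = \sum_j f_jR$, an $R$-module satisfying (T1) and (T2) whose base change to $S$ is $n$-tilting is itself $n$-tilting. Assembling the three inputs---Corollary~\ref{acl1} for ascent, Proposition~\ref{descends} to descend (T1) and (T2), and Lemma~\ref{T3} to descend (T3)---both conditions of the Affine Communication Lemma are verified, and Zariski locality of the notion of locally $n$-tilting quasi-coherent sheaf follows.

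The main obstacle in this route is the descent of (T3): conditions (T1) and (T2) are vanishing/exactness statements that propagate easily along (faithfully) flat base change, but (T3) demands the existence of a concrete finite coresolution of $R$ itself by modules in $\Add(M)$, which is not a vanishing condition and does not descend by formal manipulation. The \v{C}ech-complex reconstruction of $R$ inside the localizing subcategory $\Loc(M)\subseteq\mathbf D(R)$, via the tensor-ideal property applied to the collection $\{R[f_{j_1}^{-1},\ldots,f_{j_k}^{-1}]\}$, is exactly what neutralizes this obstacle and is the essential geometric ingredient.
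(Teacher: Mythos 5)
Your proposal is correct and follows essentially the same route as the paper: ascent via Corollary~\ref{acl1}, descent of (T1)--(T2) via Proposition~\ref{descends} (placing $M$ in $\Add(T)$ for a descended tilting module $T$), descent of (T3) via the \v{C}ech-complex argument of Lemma~\ref{T3}, and assembly through the Affine Communication Lemma. This is precisely how the paper concludes Theorem~\ref{main}.
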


Using Neeman's classification of localizing subcategories of the derived category of a noetherian commutative ring, we can check (T3'') quite easily in the case when $R$ is noetherian. Given a complex $X_\bullet \in \mathbf{D}(R)$, we denote its \emph{cohomological support} by $\supph{X_\bullet} = \{ \p \in \Spec{R} \mid \kappa(\p) \otimes_R^{\mathbb{L}} X_\bullet \neq 0\}$. If $M$ is a module, then $\supph{M}$ is the cohomological support of the stalk complex $M_\bullet$, that is, $\supph(M) = \{\p \in \Spec{R} \mid (\exists i\ge 0)(\Tor{i}{R}{\kappa(\p)}{M} \neq 0)\}$. Neeman's \cite[Theorem 2.8]{N} now says that each localizing subcategory of $\mathbf{D}(R)$ is of the form $\{X \in \mathbf{D}(R) \mid \supph{X} \subseteq P\}$, where $P$ is a subset $\Spec{R}$.

\begin{proposition}\label{kanon_na_vrabce}
	Let $R$ be a commutative noetherian ring, and $\varphi\colon R \rightarrow S$ be a faithfully flat ring homomorphism. Let $\tilde T$ be an $R$-module satisfying conditions (T1) and (T2) of Definition \ref{tilt} such that the $S$-module $\tilde T\otimes _R S$ is $n$-tilting. Then $\tilde T$ is an $n$-tilting $R$-module.
\end{proposition}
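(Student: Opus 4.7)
The plan is to verify condition (T3'') for $\tilde T$, since (T1), (T2), together with (T3''), are equivalent to $\tilde T$ being $n$-tilting. In the noetherian setting this condition is perfectly suited to Neeman's classification of localizing subcategories of $\mathbf{D}(R)$ via cohomological support. Indeed, the class $\{X\in\mathbf{D}(R)\mid \supph X\subseteq \supph \tilde T\}$ is itself localizing and contains $\tilde T$, so by the bijective correspondence recalled just before the statement, $\Loc(\tilde T)=\{X\in\mathbf{D}(R)\mid \supph X\subseteq \supph \tilde T\}$. Hence the task reduces to showing $\supph \tilde T=\Spec R$.

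Fix an arbitrary $\p\in\Spec R$. By faithful flatness of $\varphi$, Lemma~\ref{faithful} provides a prime $\q\in\Spec S$ with $\q\cap R=\p$. The base-change calculation carried out inside the proof of Proposition~\ref{descends}(1), applied to $T'=\tilde T\otimes_R S$, yields for every $i\ge 0$ a natural isomorphism
$$\Tor iS{\kappa(\q)}{T'}\cong \Tor iR{\kappa(\p)}{\tilde T}\otimes_{\kappa(\p)}\kappa(\q).$$
Since $\kappa(\p)\hookrightarrow\kappa(\q)$ is a field extension, hence faithfully flat, the left-hand side vanishes if and only if $\Tor iR{\kappa(\p)}{\tilde T}=0$. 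Now $T'$ is an $n$-tilting $S$-module by hypothesis, so Proposition~\ref{characteristic2} applied to $S$ and $T'$ guarantees that, for the prime $\q$, at least one group $\Tor iS{\kappa(\q)}{T'}$ (in fact exactly one, with $i\le n$) is nonzero. Consequently $\Tor iR{\kappa(\p)}{\tilde T}\neq 0$, so $\p\in\supph \tilde T$.

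Since $\p\in\Spec R$ was arbitrary, $\supph \tilde T=\Spec R$, and the reduction above gives $\Loc(\tilde T)=\mathbf{D}(R)$, i.e., (T3''). Combined with the assumed (T1) and (T2), this shows that $\tilde T$ is an $n$-tilting $R$-module. I do not expect a serious obstacle in this plan: the noetherian hypothesis enters precisely in order to invoke Neeman's classification, while every other ingredient (flat base change for Tor, surjectivity of $\varphi^*$ from Lemma~\ref{faithful}, and the ``residue-field support'' description of a tilting module from Proposition~\ref{characteristic2}) is already available. The one caveat is that any attempt to remove the noetherian assumption would require a substitute for Neeman's theorem, which is why the subsequent section has to handle the non-noetherian descent by rather different means.
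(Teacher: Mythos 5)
Your proof is correct, and it takes a slightly but genuinely different route from the paper's. The paper does not compute $\supph{\tilde T}$ directly: it first observes (as in Lemma~\ref{T3}) that localizing subcategories are tensor ideals, so that the (T3) sequence for $T'$ gives $S \in \Loc(\tilde T \otimes_R S) \subseteq \Loc(\tilde T)$ in $\mathbf{D}(R)$, reducing the problem to showing $R \in \Loc(S)$; Neeman's theorem is then applied to $\Loc(S)$, and $\supph{S} = \Spec{R}$ is immediate from faithful flatness ($\kappa(\p)\otimes_R^{\mathbb{L}} S \cong \kappa(\p)\otimes_R S \neq 0$). You instead apply Neeman's classification directly to $\Loc(\tilde T)$ and prove $\supph{\tilde T} = \Spec{R}$ via the base-change isomorphism $\Tor iS{\kappa(\q)}{T'} \cong \Tor iR{\kappa(\p)}{\tilde T}\otimes_{\kappa(\p)}\kappa(\q)$ from the proof of Proposition~\ref{descends}(1) together with Proposition~\ref{characteristic2}. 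Your version trades the elementary tensor-ideal observation for the heavier structural input of Proposition~\ref{characteristic2} (which rests on the classification of tilting classes and cotilting duality), but it has the advantage of establishing $\supph{\tilde T} = \Spec{R}$ explicitly — a fact the paper itself wants afterwards (it is asserted following the proposition and is proved by exactly your argument in Theorem~\ref{main-basic}). Both proofs use the noetherian hypothesis only to invoke Neeman's theorem, and both correctly reduce (T3) to (T3$''$) in the presence of (T1) and (T2).
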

\begin{proof}
		As in the proof of Lemma~\ref{T3}, it suffices to show that $R \in \Loc(S)$. By Neeman's theorem, there is a subset $P \subseteq \Spec{R}$ such that $\Loc(S) = \{X \in \mathbf{D}(R) \mid \supph{X} \subseteq P\}$. Note that $\supph{S}=\Spec{R}$. Indeed, as $S$ is flat, $\kappa(\p) \otimes_R^{\mathbb{L}} S \cong \kappa(\p) \otimes_R S$, and since $S$ is faithful, this tensor is non-zero for any $\p \in \Spec{R}$. But since $S \in \Loc(S)$, we have $P=\Spec{R}$, and thus $\Loc(S) = \mathbf{D}(R)$.
\end{proof}

\begin{corollary} \label{main-noetherian} The notion of an $n$-tilting module descends along faithfully flat homomorphisms $\varphi\colon R \rightarrow S$ of commutative rings such that $R$ is noetherian.
\end{corollary}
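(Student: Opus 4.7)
The plan is to deduce this corollary as a synthesis of Proposition \ref{descends} and Proposition \ref{kanon_na_vrabce}: the former provides descent of the characteristic sequence (and hence of the tilting class), while the latter will handle the remaining axiom (T3) under the noetherian hypothesis. Let $\varphi\colon R \to S$ be faithfully flat with $R$ noetherian, and let $\tilde T$ be an $R$-module whose base change $T' = \tilde T \otimes_R S$ is $n$-tilting over $S$. I need to verify the three axioms (T1)--(T3) for $\tilde T$.

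First, I would apply Proposition \ref{descends}(1) to obtain an $n$-tilting $R$-module $T$ such that $T \otimes_R S$ is equivalent to $T'$; in particular $\Add (T \otimes_R S) = \Add (T')$. The key observation is then that Proposition \ref{descends}(2), applied to $M = \tilde T$, immediately yields $\tilde T \in \Add (T)$, since $\tilde T \otimes_R S = T'$ lies trivially in $\Add (T')$. From the membership $\tilde T \in \Add (T)$, axioms (T1) and (T2) for $\tilde T$ follow formally from the corresponding axioms for $T$: projective dimension descends to direct summands of direct sums, and $\Ext{i}{R}{\tilde T}{\tilde T^{(\kappa)}}$ is a direct summand of $\Ext{i}{R}{T^{(\lambda)}}{T^{(\mu)}} \cong \Ext{i}{R}{T}{T^{(\mu)}}^\lambda$ for suitable cardinals $\lambda$, $\mu$, and this product vanishes for $1 \leq i \leq n$ by (T2) for $T$.

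With (T1) and (T2) secured for $\tilde T$, the last axiom (T3) is precisely what Proposition \ref{kanon_na_vrabce} delivers under the noetherian hypothesis, so one final invocation of that proposition concludes the argument. I do not anticipate any further obstacle: the substantial work---descent of the characteristic sequence using the Thomason-closedness of faithfully flat maps (Proposition \ref{descends}), and verification of (T3'') via Neeman's classification of localizing subcategories for commutative noetherian rings (Proposition \ref{kanon_na_vrabce})---has already been carried out, and Corollary \ref{main-noetherian} is simply the assembly of these pieces.
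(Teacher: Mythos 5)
Your proposal is correct and follows exactly the paper's route: the paper's preamble to Section 4 explains that Proposition~\ref{descends} (via $\tilde T\in\Add(T)$ for the descended tilting module $T$) already yields (T1) and (T2) for $\tilde T$, and Corollary~\ref{main-noetherian} is then the immediate combination of this with Proposition~\ref{kanon_na_vrabce}, which supplies (T3) in the noetherian case. Your explicit verification that (T1) and (T2) pass to members of $\Add(T)$ is the same formal argument the paper leaves implicit.
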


Along the way, we have collected some additional information on modules $\tilde{T}\in\ModR$ which ascend over a faithfully flat homomorphism $\varphi\colon R\to S$ to an $n$-tilting module $T':=\tilde{T}\otimes_RS$. From Proposition~\ref{descends} and Corollary~\ref{main_lk} we already knew that there existed an $n$-tilting module $T\in\ModR$ such that $\tilde{T}\in\Add{T}$ and $\Add(T')=\Add(T\otimes_RS)$. From the proof of Proposition~\ref{kanon_na_vrabce}, we now know that $\supph{\tilde{T}} = \Spec{R}$. In the case $n=0$, $\tilde{T}$ is a projective module with a full support, so necessarily a projective generator (e.g. because its trace ideal must be the whole ring $R$). In the general case, we can say the following:

\begin{lemma}\label{Tor-progenerator}
		Let $T$ be an $n$-tilting $R$-module corresponding to a characteristic sequence $\bar{P}=(P_0,P_1,\ldots,P_{n-1})$. If $\tilde{T} \in \Add(T)$ is such that $\supph{\tilde{T}} = \Spec{R}$, and $I$ is a finitely generated ideal such that $V(I) \subseteq P_{n-1}$, then
		$$\Tor{i}{R}{\tilde{T}}{R/I}=\begin{cases}0, & i \neq n, \\ \text{a projective generator in $\rmod{R/I}$}, & i=n. \end{cases}$$
In other words, $\tilde{T} \otimes_R^\mathbb{L} R/I$ is quasi-isomorphic to $P[-n]$, where $P$ is a projective generator of $\rmod{R/I}$.
\end{lemma}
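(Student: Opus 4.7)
The plan is to proceed in three steps: first, establish the vanishing of $\Tor{i}{R}{\tilde T}{R/I}$ for $i\neq n$; second, show that $P:=\Tor{n}{R}{\tilde T}{R/I}$ is projective over $R/I$; and third, show that $P$ is a generator of $\rmod{R/I}$.

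For step one, I would exploit that $\tilde T\in\Add(T)\subseteq\mathcal B=\mathcal T_{\bar P}$, and that the nesting $P_0\supseteq\cdots\supseteq P_{n-1}$ combined with $V(I)\subseteq P_{n-1}$ yields $V(I)\subseteq P_i$ for every $i<n$. Since $\mathcal T_{\bar P}$ depends only on $\bar P$, a generating family of finitely generated ideals for $P_i$ may be chosen to contain $I$, so Theorem~\ref{comnoe} gives $\Tor{i}{R}{\tilde T}{R/I}=0$ for $i<n$. Vanishing for $i>n$ is automatic from $\mathrm{pd}\,\tilde T\le n$. Consequently $\tilde T\otimes_R^{\mathbb L}R/I\simeq P[-n]$ in $\mathbf D(R/I)$. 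For step two, the derived adjunction between $-\otimes_R^{\mathbb L}R/I$ and the forgetful functor yields, for any $N\in\rmod{R/I}$, natural isomorphisms
\[
\mathbb{R}\!\Hom{R/I}{P}{N}[n]\;\simeq\;\mathbb{R}\!\Hom{R/I}{\tilde T\otimes_R^{\mathbb L}R/I}{N}\;\simeq\;\mathbb{R}\!\Hom{R}{\tilde T}{N}.
\]
Passing to $H^{k+n}$ for $k\ge 1$ gives $\Ext{k}{R/I}{P}{N}\cong\Ext{k+n}{R}{\tilde T}{N}=0$, again from $\mathrm{pd}\,\tilde T\le n$, so $P$ is projective in $\rmod{R/I}$.

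For step three, Proposition~\ref{characteristic2} applied to the $n$-tilting module $T$ shows that every $\mathfrak p\in V(I)\subseteq P_{n-1}$ satisfies $\Tor{j}{R}{\kappa(\mathfrak p)}{T}=0$ for $j\ne n$, and the same vanishing transfers to $\tilde T\in\Add(T)$ since it is a direct summand of a direct sum of copies of $T$. Combined with the hypothesis $\supph{\tilde T}=\Spec R$, this forces $\Tor{n}{R}{\kappa(\mathfrak p)}{\tilde T}\ne 0$. Associativity of the derived tensor product together with the projectivity of $P$ established in step two then provides
\[
\tilde T\otimes_R^{\mathbb L}\kappa(\mathfrak p)\;\simeq\;\bigl(P\otimes_{R/I}^{\mathbb L}\kappa(\mathfrak p)\bigr)[-n]\;\simeq\;\bigl(P\otimes_{R/I}\kappa(\mathfrak p)\bigr)[-n],
\]
so $P\otimes_{R/I}\kappa(\mathfrak p)\ne 0$ for every prime $\mathfrak p$ of $R/I$. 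A projective $R/I$-module with full support over $R/I$ is a generator, since its trace ideal cannot be contained in any maximal ideal, completing the argument.

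I expect the main subtlety to lie in step one: one has to justify that the $\Tor$-vanishing characterization of $\mathcal T_{\bar P}$ from Theorem~\ref{comnoe} applies to our specific ideal $I$ rather than only to members of some previously fixed generating family $\mathcal I_i$ of $P_i$. This reduces to the fact that $\mathcal T_{\bar P}$ is intrinsically determined by the Thomason sets $P_i$, but it deserves an explicit remark. The remaining calculations in steps two and three are standard change-of-rings and associativity manipulations in the derived category.
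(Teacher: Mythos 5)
Your argument is correct, and while it follows the same three-step skeleton as the paper (vanishing of $\Tor{i}{R}{\tilde T}{R/I}$ for $i\neq n$ via Theorem~\ref{comnoe}, projectivity of $P=\Tor{n}{R}{\tilde T}{R/I}$, then the generator property via full support and the trace ideal), the mechanisms you use for the last two steps differ from the paper's. The paper takes a projective resolution $0\to P_n\to\cdots\to P_0\to\tilde T\to 0$, tensors with $R/I$, and observes that the resulting bounded exact complex of projective $R/I$-modules is split; hence its leftmost kernel, which is exactly $\Tor{n}{R}{\tilde T}{R/I}$, is projective, and tensoring the split complex with $\kappa(\p)$ gives $P\otimes_{R/I}\kappa(\p)\cong\Tor{n}{R}{\tilde T}{\kappa(\p)}\neq 0$ in one stroke. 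You instead obtain projectivity from the (collapsed change-of-rings) identification $\Ext{k}{R/I}{P}{N}\cong\Ext{k+n}{R}{\tilde T}{N}$ and $\pd\tilde T\le n$, and you get the nonvanishing of $\Tor{n}{R}{\tilde T}{\kappa(\p)}$ by invoking Proposition~\ref{characteristic2} for $T$ and transferring the vanishing in degrees $j\neq n$ to the summand $\tilde T$. Both routes are valid; the paper's split-complex argument is more elementary and self-contained, whereas yours trades that for standard derived-category adjunctions and an extra citation, and has the minor virtue of isolating the change-of-rings isomorphism explicitly. Your remark about choosing the generating family $\mathcal I_i$ to contain $I$ in step one is a legitimate point that the paper glosses over, and is settled by the fact that $\mathcal T_{\bar P}$ depends only on $\bar P$ (or directly by Lemma~\ref{resolving}, which quantifies over all finitely generated $I$ with $V(I)\subseteq P_i$). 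The only blemish is the bookkeeping of the shift $[n]$ versus $[-n]$ in your displayed adjunction isomorphism, but the degree identification you actually use, $\Ext{k}{R/I}{P}{N}\cong\Ext{k+n}{R}{\tilde T}{N}$, is the correct one.
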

\begin{proof}
		By Theorem~\ref{comnoe}, $\Tor{i}{R}{\tilde{T}}{R/I}=0$ for any $i \neq n$, because $\tilde{T} \in \Add(T)$. It is enough to show that $\Tor{i}{R}{\tilde{T}}{R/I}$ is a projective generator of $\operatorname{Mod-R/I}$. Let
		$$0 \rightarrow P_n \rightarrow P_{n-1} \rightarrow \cdots \rightarrow P_0 \rightarrow T \rightarrow 0$$
		be a projective resolution of $T$. Applying $- \otimes_R R/I$ yields an exact sequence
		\begin{equation}\label{splitexact}
		P_n \otimes_R R/I \rightarrow P_{n-1} \otimes_R R/I \rightarrow \cdots \rightarrow P_0 \otimes_R R/I \rightarrow 0,
		\end{equation}
		and since $P_i \otimes_R R/I$ is a projective $R/I$-module for each $i \leq n$, the sequence (\ref{splitexact}) is split. Therefore, the kernel of the leftmost map of (\ref{splitexact}) is a projective $R/I$-module. But this kernel is precisely $\Tor{n}{R}{\tilde{T}}{R/I}$. Finally, for any $\p \in \Spec{R/I} = V(I)$, applying $- \otimes_R \kappa(\p)$ onto the split exact complex (\ref{splitexact}) yields:
		$$\Tor{n}{R}{\tilde{T}}{R/I} \otimes_{R/I} \kappa(\p) \cong \Tor{n}{R}{\tilde{T}}{\kappa(\p)} \neq 0,$$
where the last inequality follows from the assumption that $\supph{\tilde{T}}=\Spec R$.
This proves that the projective $R/I$-module $\Tor{n}{R}{\tilde{T}}{R/I}$ is a generator of $\rmod{R/I}$.
\end{proof}

Recall from Definition~\ref{defthom} that a Thomason set $P$ is basic if it is of the form $V(I)$, where $I$ is a finitely generated ideal. This is equivalent to saying that there is a finite collection $I_1,I_2,\ldots,I_m$ of finitely generated ideals such that $P=\bigcup_{i=1}^m V(I_i)$, as then $P=V(I_1I_2\ldots I_m)$. A key fact about basic Thomason sets is that they are quasi-compact with respect to the Thomason topology. Although this follows from the discussion in~\cite{KP}, we give a short and elementary proof for the convenience of the reader.

\begin{lemma}\label{Thomason qc}
		Let $R$ be a commutative ring, $J$ be an ideal and $\mathcal{I}$ an arbitrary collection of finitely generated ideals. If $V(J) = \bigcup_{I\in\mathcal{I}} V(I)$ in $\Spec{R}$, then there exists a finite subcollection $\mathcal{I}'\subseteq\mathcal{I}$ such that $V(J) = \bigcup_{I\in\mathcal{I}'} V(I)$.
\end{lemma}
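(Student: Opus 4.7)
I would argue by contradiction, using a Zorn-style construction in the spirit of the standard existence proof for prime ideals. Assume that no finite subcollection of $\mathcal{I}$ covers $V(J)$, and let $\Sigma$ denote the collection of all ideals $K \supseteq J$ for which $V(K)$ is not contained in $V(I_1) \cup \cdots \cup V(I_k)$ for any finitely many $I_1,\ldots,I_k \in \mathcal{I}$. By hypothesis $J \in \Sigma$, so $\Sigma$ is nonempty.

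To apply Zorn's lemma, I would verify that every chain $(K_\alpha)$ in $\Sigma$ has an upper bound in $\Sigma$. Setting $K = \bigcup_\alpha K_\alpha$, suppose for contradiction that $V(K) \subseteq V(I_1) \cup \cdots \cup V(I_k) = V(I_1 \cdots I_k)$ for some $I_1,\ldots,I_k \in \mathcal{I}$. Then the finitely generated ideal $I_1 \cdots I_k$ lies in $\sqrt{K}$, and because it has finitely many generators---each with some power landing in one of the $K_{\alpha_j}$---all those powers lie in a common $K_\alpha$ by the chain condition. Hence $I_1 \cdots I_k \subseteq \sqrt{K_\alpha}$, giving $V(K_\alpha) \subseteq V(I_1) \cup \cdots \cup V(I_k)$ and contradicting $K_\alpha \in \Sigma$. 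Zorn therefore furnishes a maximal element $P \in \Sigma$.

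Next I would show that $P$ is prime. If $a,b \notin P$ with $ab \in P$, then the strictly larger ideals $P+(a)$ and $P+(b)$ escape $\Sigma$ by maximality, so $V(P+(a))$ and $V(P+(b))$ both admit finite covers by members of $\mathcal{I}$. Every prime containing $P$ contains $ab$ and hence $a$ or $b$, so $V(P) = V(P+(a)) \cup V(P+(b))$, and the union of the two finite covers becomes a finite cover of $V(P)$---contradicting $P \in \Sigma$. Therefore $P$ is prime; since $P \supseteq J$, the hypothesis $V(J) = \bigcup_{I\in\mathcal{I}}V(I)$ forces $I \subseteq P$ for some single $I \in \mathcal{I}$, whence $V(P) \subseteq V(I)$ and the one-element family $\{I\}$ already covers $V(P)$, yielding the final contradiction.

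The only delicate step is the chain condition in the Zorn argument, where the finite generation of each $I \in \mathcal{I}$ is used crucially: it is what ensures that membership of the finitely generated product $I_1 \cdots I_k$ in the radical of a directed union of ideals must already happen at some finite stage $K_\alpha$. The primality argument and the concluding contradiction are then direct analogues of the classical Krull-style reasoning for the existence of prime ideals, so I expect no further obstacles.
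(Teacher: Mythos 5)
Your proof is correct, and it takes a somewhat different route from the paper's. Both arguments share the same Zorn-type skeleton — produce a maximal ``bad'' ideal, show it is prime, and derive a contradiction from $V(J)=\bigcup_{I\in\mathcal I}V(I)$ — but they differ in how the family of bad ideals is set up and how the chain condition is verified. You work directly with the poset $\Sigma$ of ideals $K\supseteq J$ whose $V(K)$ admits no finite subcover, and you close it under unions of chains by an elementary radical argument: $V(K)\subseteq V(I_1\cdots I_k)$ means $I_1\cdots I_k\subseteq\sqrt{K}$, and since $I_1\cdots I_k$ is finitely generated this containment already occurs at some finite stage of the chain. The paper instead passes through the hereditary torsion pair of finite type $(\mathcal T,\mathcal F)$ associated with $\mathcal I$: it considers the ideals $K$ with $R/K\in\mathcal F$, gets closure under chains for free from $\mathcal F$ being closed under direct limits, and, after showing $R/J\in\mathcal T$, invokes the cited description of $\mathcal T$ (each element is annihilated by a finite product $I_1\cdots I_m$ of ideals from $\mathcal I$) to obtain $I_1\cdots I_m\subseteq J$ outright, whence $V(J)=V(I_1)\cup\dots\cup V(I_m)$. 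What your version buys is complete self-containment — no torsion-theoretic input or external citation, just radicals and finite generation — at the cost of being a pure contradiction argument that does not exhibit the finite subcover as explicitly; the paper's version is slightly less elementary but aligns with the bijection between Thomason sets and hereditary torsion pairs of finite type that is used elsewhere in the text. One cosmetic remark: your argument shows that $V(J)$ is \emph{contained} in a finite union $\bigcup_{I\in\mathcal I'}V(I)$; the stated equality then follows since each $V(I)$, $I\in\mathcal I$, is contained in $V(J)$ by hypothesis — worth a half-sentence, but no gap.
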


\begin{proof}
Let $(\mathcal{T},\mathcal{F})$ be the torsion pair in $\ModR$ with $\mathcal{F}=\bigcap_{I\in\mathcal{I}}\Ker{\Hom{R}{R/I}{-}}$. The modules $M\in\mathcal{T}$ are precisely those for which each $x\in M$ is annihilated by the product $I_1\cdots I_m$ of (not necessarily distinct) ideals $I_1,\dots,I_m\in \mathcal{I}$ (see \cite[Lemma 2.3]{Hrb}). We also denote by $\mathcal{K}$ the collection of proper ideals $K\subsetneqq R$ such that $R/K\in\mathcal{F}$. Since $\mathcal{F}$ is closed under direct limits, $\mathcal{K}$ is closed under unions of chains and,  by the Zorn Lemma, each ideal in $\mathcal{K}$ is contained in a maximal one with respect to inclusion. Moreover, if $\p$ is maximal in $\mathcal{K}$, it is a prime ideal. Indeed, if $\p = K_1K_2$ and $\p\subsetneqq K_1$, then $0\ne K_1/\p\in\mathcal{F}$ is annihilated by $K_2$, so $K_2=\p$ by the maximality of $\p$ (cp.~\cite[Lemma 3.9]{Hrb}).

Now we claim that $R/J \in \mathcal{T}$. Otherwise there would exist $K\in\mathcal{K}$ with $K\supseteq J$, and hence also $\p\in V(J)$ with $R/\p\in\mathcal{F}$. However, we have $\p\in V(I)$ for some $I\in\mathcal{I}$ by assumption, which implies $R/\p\in\mathcal{T}$, a contradiction. Therefore, by the above description of modules in $\mathcal{T}$, there exist ideals $I_1,\dots,I_m\in \mathcal{I}$ such that the product $I_1\cdots I_m$ annihilates $R/J$. This implies that $V(J) = V(I_1)\cup\cdots\cup V(I_m)$.
\end{proof}

Now we can extend the definition of basic Thomason sets to characteristic sequences and prove the descent of tilting modules for them.

\begin{definition} \label{basic}
We call a characteristic sequence $(P_0,P_1,\ldots,P_{n-1})$ \emph{basic} if each of the Thomason sets $P_i$, $0 \leq i < n$, is basic.
\end{definition}

\begin{theorem}\label{main-basic}
		Let $\varphi\colon R \rightarrow S$ be a faithfully flat ring homomorphism and $\tilde{T}$ an $R$-module such that $\tilde{T}\otimes_RS$ is an $n$-tilting $S$-module corresponding to a basic characteristic sequence $\bar{Q}=(Q_0,Q_1,\ldots,Q_{n-1})$ of $\Spec{S}$. Then $\tilde{T}$ is an $n$-tilting module.
\end{theorem}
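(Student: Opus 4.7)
The plan is to verify condition (T3''), i.e., that $\Loc(\tilde T)=\mathbf{D}(R)$, equivalently $R\in\Loc(\tilde T)$. By Proposition~\ref{descends} there is an $n$-tilting $R$-module $T$ with characteristic sequence $\bar P=(P_0,\ldots,P_{n-1})$ such that $\tilde T\in\Add(T)$ (so (T1) and (T2) hold automatically for $\tilde T$), and, as in the proof of Proposition~\ref{kanon_na_vrabce}, the faithful flatness of $\varphi$ together with Proposition~\ref{characteristic2} forces $\supph\tilde T=\Spec R$. A preparatory step is to observe that $\bar P$ is itself basic. Since $Q_i=V(K_i)$ with $K_i$ finitely generated and Lemma~\ref{partic2} gives $Q_i=\bigcup_{I\in\mathcal I_i}V(IS)$, Lemma~\ref{Thomason qc} provides finitely many $I_1,\ldots,I_m\in\mathcal I_i$ with $V(K_i)=V(J_iS)$ where $J_i=I_1\cdots I_m$; the inclusion $V(J_i)\subseteq P_i$ is immediate, while a lifting argument using Lemma~\ref{faithful} establishes the reverse inclusion, yielding $P_i=V(J_i)$.

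I then reduce the theorem to the following self-contained statement, which I prove by induction on $n$: whenever $T$ is $n$-tilting over a commutative ring $R$ with basic characteristic sequence $(V(J_0),\ldots,V(J_{n-1}))$ and $\tilde T\in\Add(T)$ has full cohomological support, $\tilde T$ is $n$-tilting. The base case $n=0$ is immediate: a projective $\tilde T$ with full support has trace ideal equal to $R$, so it is a projective generator. For the inductive step, write $J=J_{n-1}=(g_1,\ldots,g_k)$ and use the short exact sequence of complexes $0\to L_J(R)\to\check{C}^\bullet(J)\to R[0]\to 0$ obtained as the degree-zero projection from the \v{C}ech complex, where $L_J(R)$ is the \v{C}ech truncation built from localizations $R[g_{i_1}^{-1},\ldots,g_{i_s}^{-1}]$. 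The induced triangle $L_J(R)\to\check{C}^\bullet(J)\to R$ in $\mathbf{D}(R)$ reduces the task to showing that both $L_J(R)$ and $\check{C}^\bullet(J)$ lie in $\Loc(\tilde T)$.

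For $L_J(R)$, I localize: setting $R'=R[g_{i_1}^{-1},\ldots,g_{i_s}^{-1}]$, the module $T\otimes_R R'$ is $n$-tilting over $R'$ with restricted characteristic sequence in which $V(J)\cap\Spec{R'}=\emptyset$, giving an effective $(n-1)$-tilting structure with basic characteristic sequence; $\tilde T\otimes_R R'\in\Add(T\otimes_R R')$ retains full support, so by the inductive hypothesis $\tilde T\otimes_R R'$ is $(n-1)$-tilting, whence $R'$ lies in the localizing subcategory of $\mathbf{D}(R')$ generated by $\tilde T\otimes_R R'$. Since restriction of scalars $\mathbf{D}(R')\to\mathbf{D}(R)$ commutes with direct sums and cones, and $\Loc(\tilde T)$ is a tensor ideal containing $\tilde T\otimes_R R'$, we deduce $R'\in\Loc(\tilde T)$, and a finite iteration of triangles places all of $L_J(R)$ in $\Loc(\tilde T)$. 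For $\check{C}^\bullet(J)$, I use the identification $\check{C}^\bullet(J)\simeq\varinjlim_m K_\bullet(J^{[m]})[-k]$ recalled in the excerpt; by Lemma~\ref{Tor-progenerator}, $\tilde T\otimes_R^{\mathbb{L}}R/J^{[m]}$ is a shifted projective generator of $\rmod{R/J^{[m]}}$, so $R/J^{[m]}\in\Loc(\tilde T)$ and therefore every $R/J^{[m]}$-module lies in $\Loc(\tilde T)$. The bounded perfect complex $K_\bullet(J^{[m]})$ has all cohomology annihilated by $J^{[m]}$, so a Postnikov argument via truncation triangles places $K_\bullet(J^{[m]})$ in $\Loc(\tilde T)$; closure under homotopy colimits then gives $\check{C}^\bullet(J)\in\Loc(\tilde T)$. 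I expect the main obstacle to be precisely this last step---translating the pointwise Tor-vanishing supplied by Lemma~\ref{Tor-progenerator} into containment of the full local-cohomology complex in $\Loc(\tilde T)$ without recourse to Neeman's classification or weak proregularity---but the basic hypothesis on $\bar P$ is exactly what makes the finite Koszul/\v{C}ech machinery available in the form needed.
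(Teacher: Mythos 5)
Your proof is correct and follows essentially the same route as the paper's: basicness of $\bar P$ via Lemmas~\ref{Thomason qc} and \ref{faithful}, full support of $\tilde T$ from Propositions~\ref{descends} and \ref{characteristic2}, induction on $n$, the triangle relating $R$ to the \v{C}ech complex and its truncation, localization plus the induction hypothesis for the truncated part, and Lemma~\ref{Tor-progenerator} for the torsion part. The only difference is that where the paper cites \cite[\S 7.4]{HS} for $\check{C}_\bullet(I)\in\Loc(R/I)$, you prove the needed containment directly from the Koszul-colimit description of the \v{C}ech complex; that argument is complete, and the final step you flagged as a potential obstacle is in fact already disposed of by your use of Lemma~\ref{Tor-progenerator} (which yields that $R/J^{[m]}$ itself, not merely pointwise Tor-vanishing, lies in $\Loc(\tilde T)$).
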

\begin{proof}
		Let $\bar{P}=(P_0,P_1,\ldots,P_{n-1})$ be the characteristic sequence obtained from $\bar{Q}$ by Proposition~\ref{descends}, that is, $Q_i=(\varphi^*)^{-1}(P_i)$ for $i<n$. Then $\bar{P}$ is basic as well.
Indeed, fix $i<n$ and let $J$ be a finitely generated ideal of $S$ such that $Q_i=V(J)$. Since $P_i$ is Thomason, there is a set $\mathcal{I}$ of finitely generated ideals of $R$ such that $P_i=\bigcup_{I \in \mathcal{I}}V(I)$. Then $Q_i = V(J) = \bigcup_{I \in \mathcal{I}}V(SI)$, so $Q_i = V(SI_1) \cup \dots \cup V(SI_m) = V(SI_1\cdots I_m)$ for some $I_1,\dots,I_m\in\mathcal{I}$ by Lemma~\ref{Thomason qc}. It follows from Lemma~\ref{faithful} that $P_i = V(SI_1\cdots I_m \cap R) = V(I_1\cdots I_m)$ is basic, as claimed.

		Let $(\mathcal{A},\mathcal{B})$ be the $n$-tilting cotorsion pair corresponding to $\bar{P}$ in the sense of Theorem~\ref{comnoe}. By Proposition~\ref{descends} and Proposition~\ref{characteristic2}, 
		$$\tilde{T} \in \mathcal{A} \cap \mathcal{B} \text{ and } \supph{\tilde{T}} = \Spec{R}.$$ 
		We aim to show by induction on $n \geq 0$ that $\Loc(\tilde{T})=\mathbf{D}(R)$. If $n=0$, then $\tilde{T}$ is a projective module such that $\tilde{T} \otimes_R \kappa(\p) \neq 0$ for any $\p \in \Spec{R}$. This implies that the trace ideal of $\tilde{T}$ is necessarily the whole $R$, and therefore $\tilde{T}$ is a projective generator, whence a 0-tilting module.
		
		Suppose now that the result holds for all dimensions smaller than $n$. Let $I$ be an ideal of $R$ generated by elements $x_1,x_2,\ldots,x_m$ such that $P_{n-1}=V(I)$. By Proposition~\ref{ascends} and Lemma~\ref{partic2}, the localization $\tilde{T}[x_i^{-1}]$ is a module belonging to $\mathcal{A}_i \cap \mathcal{B}_i$, where $(\mathcal{A}_i,\mathcal{B}_i)$ is an $n$-tilting cotorsion pair in $\rmod{R[x_i^{-1}]}$ corresponding to the basic characteristic sequence $(Q^i_0,Q^i_1,\ldots,Q^i_{n-1})$ given by $Q^i_j=P_j \cap \Spec{R[x_i^{-1}]}$. But since $x_i \in I$, we see that $Q^i_{n-1}=\emptyset$, and thus $(\mathcal{A}_i,\mathcal{B}_i)$ is actually an $(n-1)$-tilting pair. Clearly, 
		$$\tilde{T}[x_i^{-1}] \otimes_{R[x_i^{-1}]}^\mathbb{L} \kappa(\q) \cong \tilde{T} \otimes_R^\mathbb{L} ({R[x_i^{-1}]} \otimes_R \kappa(\q)) \cong \tilde{T} \otimes_R^\mathbb{L} \kappa(\q) \neq 0$$ 
		for any $\q \in \Spec{R[x_i^{-1}]}$, and thus $\supph_{R[x_i^{-1}]} \tilde{T}[x_i^{-1}] = \Spec{R[x_i^{-1}]}$. Therefore, our induction hypothesis applies and $\tilde{T}[x_i^{-1}]$ is an $(n-1)$-tilting $R[x_i^{-1}]$-module for all $i<m$. By condition (T3''), we have $R[x_i^{-1}] \in \Loc(\tilde{T}[x_i^{-1}]) \subseteq \Loc(\tilde{T})$ for all $i<m$, and thus we can argue as in the proof of Proposition~\ref{kanon_na_vrabce} that the truncated \v{C}ech complex $\check{C}'_\bullet(I)$ belongs to $\Loc(\tilde{T})$. 

		On the other hand, the (non-truncated) \v{C}ech complex $\check{C}_\bullet(I)$ belongs to $\Loc(R/I)$ (see e.g. \cite[\S 7.4]{HS}), and $\Loc(R/I) = \Loc(\tilde{T} \otimes_R^\mathbb{L} R/I) \subseteq \Loc(\tilde{T})$ by Lemma~\ref{Tor-progenerator}. Finally, the triangle
		$$\check{C}_\bullet(I) \rightarrow R \rightarrow \check{C}'_\bullet(I) \rightarrow \check{C}_\bullet(I)[1]$$
		shows that $R \in \Loc(\tilde{T})$.
\end{proof}

\begin{remark}
A part of the proof of Theorem~\ref{main-basic} would work even for a general characteristic sequence $\bar{P}=(P_0,P_1,\ldots,P_{n-1})$. The Thomason set $P_{n-1}$ induces a localizing subcategory of the derived category $\mathbf{D}(R)$, and there is a triangle
$$\Gamma(R) \rightarrow R \rightarrow L(R) \rightarrow \Gamma(R)[1],$$
with $\Gamma(R) \in \Loc(\{R/I \mid I\in\mathcal{I}_{n-1}\}$ and $L(R) \in \bigcap_{I\in\mathcal{I}_{n-1}}\Ker{\mathbb{R}\!\Hom{R}{R/I}{-}}$, where $\mathcal{I}_{n-1}$ is a set of finitely generated ideals such that $P_{n-1} = \bigcup_{I\in\mathcal{I}_{n-1}} V(I)$ (see e.g.\ \cite[Propositions 1.1.5 and 2.1.2]{KP}). It can be shown similarly as in the preceding proof that $\Gamma(R) \in \Loc(\tilde T)$, but it is not clear to us in general how to prove $L(R) \in \Loc(\tilde T)$ when $P_{n-1}$ is not basic.
\end{remark}

However, we can still prove the general faithfully flat descent in the case of tilting modules of projective dimension $\leq 1$, using different methods. For this, we need several preparatory steps.

In what follows, let $\varphi\colon R \to S$ be a faithfully flat homomorphism of commutative rings and $\varphi^*\colon \Spec S \to \Spec R$ the corresponding map between the spectra. Suppose that $\tilde{T}\in\ModR$ is an $R$-module such that $T' = \tilde{T}\otimes_RS$ is a $1$-tilting $S$-module corresponding to a Thomason set $Q = Q_0 \subseteq \Spec{S}$. Also let $P=\varphi^*[Q]$ be the Thomason set obtained by Proposition~\ref{descends}, and let $T$ be a 1-tilting $R$-module corresponding to $P$ in the sense of Theorem~\ref{comnoe}, and denote by $(\mathcal{A}, \mathcal{B})$ the tilting cotorsion pair in $\ModR$ corresponding to $T$, that is,
\begin{align*}
\mathcal{B} &= \{ B \in \ModR \mid BI=B \text{ for each  finitely generated } I \subseteq R \text{ such that } V(I) \subseteq P \} \\
&= \bigcap_{V(I)\subseteq P} \Ker{R/I \otimes_R -}.
\end{align*}

We now make a key observation about the character module $\tilde{T}^+$.

\begin{lemma} \label{lem:dual-cotilting}
In the setting as above, $\tilde{T}^+$ is a $1$-cotilting $R$-module and the corresponding cotilting class is
\[ \mathcal{F} = \bigcap_{V(I)\subseteq P} \Ker{\Hom R{R/I}{-}}. \]
\end{lemma}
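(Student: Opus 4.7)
The plan is to verify the three defining conditions (C1), (C2), (C3$'$) for the $1$-cotilting property of $\tilde{T}^+$, and then to identify its cotilting class ${}^\perp\{\tilde{T}^+\}$ with $\mathcal{F}$. The starting point is Proposition~\ref{descends}(2) applied with $M = \tilde{T}$ and $M' = T' \in \Add(T')$: this gives $\tilde{T}\in\Add(T) = \mathcal{A}\cap\mathcal{B}$, so $\tilde{T}$ is a summand of some $T^{(\kappa)}$ and hence $\tilde{T}^+$ is a summand of $(T^{(\kappa)})^+ \cong (T^+)^\kappa$, i.e.\ $\tilde{T}^+ \in \Prod(T^+)$. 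Moreover, the base-change computation in the proof of Proposition~\ref{descends}(1) yields $\Tor{i}{R}{\kappa(\p)}{\tilde{T}}\otimes_{\kappa(\p)}\kappa(\q) \cong \Tor{i}{S}{\kappa(\q)}{T'}$ whenever $\q\cap R = \p$; together with $P = \varphi^*[Q]$ and Proposition~\ref{characteristic2} applied to $T'$, this yields $\supph \tilde{T} = \Spec R$.

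Conditions (C1) and (C2) are then immediate. Since $\tilde{T}\in\Add(T)$ and $\pd T \le 1$, one has $\id \tilde{T}^+ = \fd \tilde{T}\le \pd \tilde{T}\le 1$. Because $\Ext{1}{R}{-}{-}$ commutes with products in the second variable, $\Ext{1}{R}{(\tilde{T}^+)^\lambda}{\tilde{T}^+}$ is a direct summand of $\Ext{1}{R}{(T^+)^{\kappa\lambda}}{T^+}^\kappa$, which vanishes by (C2) for the $1$-cotilting module $T^+$. For (C3$'$), the derived Hom-tensor adjunction gives $\mathbb{R}\!\Hom{R}{X}{\tilde{T}^+} \cong (X\otimes_R^{\mathbb{L}}\tilde{T})^+$, and since $(-)^+$ reflects zero objects, it suffices to show that $X\otimes_R^{\mathbb{L}}\tilde{T} = 0$ implies $X = 0$ in $\mathbf{D}(R)$. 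Assuming the vanishing, flatness of $S$ over $R$ and associativity of the derived tensor product yield
\[ (X\otimes_R^{\mathbb{L}}S)\otimes_S^{\mathbb{L}} T' \cong X\otimes_R^{\mathbb{L}}(\tilde{T}\otimes_R S) \cong (X\otimes_R^{\mathbb{L}}\tilde{T})\otimes_R S = 0, \]
so (C3$'$) applied to the $1$-cotilting $S$-module $(T')^+$ forces $X\otimes_R^{\mathbb{L}}S = 0$ in $\mathbf{D}(S)$; faithful flatness of $\varphi$ then propagates this down to $X = 0$.

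It remains to identify the cotilting class of $\tilde{T}^+$. By Hom-tensor duality, ${}^\perp\{\tilde{T}^+\} = \{M\in\ModR\mid \Tor{1}{R}{M}{\tilde{T}} = 0\}$. The inclusion $\mathcal{F}\subseteq{}^\perp\{\tilde{T}^+\}$ follows at once from $\tilde{T}^+\in\Prod(T^+)$ and $\mathcal{F} = {}^\perp\{T^+\}$. The reverse inclusion is the main obstacle. Given $M\notin\mathcal{F}$, pick $0\ne m\in M$ with $Im = 0$ for some finitely generated $I$ satisfying $V(I)\subseteq P$, and put $J = \operatorname{Ann}_R(m)$; then $I\subseteq J\subsetneq R$, and $R/J\cong Rm$ embeds in $M$. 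Since $\pd \tilde{T}\le 1$, this inclusion induces an injection $\Tor{1}{R}{R/J}{\tilde{T}}\hookrightarrow \Tor{1}{R}{M}{\tilde{T}}$, so it suffices to show $\Tor{1}{R}{R/J}{\tilde{T}}\ne 0$. By Lemma~\ref{Tor-progenerator}, applicable thanks to $\supph \tilde{T} = \Spec R$, the $R/I$-module $\Pi := \Tor{1}{R}{R/I}{\tilde{T}}$ is a projective generator of $\rmod{R/I}$; in particular, $R/I$ is a direct summand of $\Pi^{(\mu)}$ for some cardinal $\mu$, making $\Pi$ faithfully flat over $R/I$. The change-of-rings spectral sequence for $R\to R/I$ collapses (using the vanishing of $\Tor{i}{R}{R/I}{\tilde{T}}$ for $i\ne 1$, which follows from $\tilde{T}\in\mathcal{B}$ and $\pd \tilde{T}\le 1$) to give $\Tor{1}{R}{R/J}{\tilde{T}}\cong \Pi\otimes_{R/I}R/J$, and this is nonzero by faithful flatness of $\Pi$ together with $R/J \ne 0$.
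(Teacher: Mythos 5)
Your proof is correct, and it diverges from the paper's argument in two instructive ways. For (C3$'$), the paper works with (co)localizing subcategories: since localizing subcategories are tensor ideals, $T'=\tilde{T}\otimes_RS\in\Loc(\tilde{T})$, hence $S\in\Loc(\tilde{T})$ by (T3) for $T'$, hence $S^+\in\Coloc(\tilde{T}^+)$, and $S^+$ is an injective cogenerator of $\ModR$ by faithful flatness; your route via the derived Hom-tensor adjunction $\mathbb{R}\!\Hom{R}{X}{\tilde{T}^+}\cong(X\otimes_R^{\mathbb{L}}\tilde{T})^+$, base change to $\mathbf{D}(S)$, (C3$'$) for the dual of $T'$ over $S$, and faithfully flat descent of vanishing is an equivalent unwinding of the same idea. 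The more substantial difference is the identification of the cotilting class. The paper treats this as immediate from $\tilde{T}^+\in\Prod(T^+)$ once the cotilting property is established: for $1$-cotilting modules the cotilting class coincides with the class of cogenerated modules, so ${}^{\perp}\{\tilde{T}^+\}=\Cog(\tilde{T}^+)\subseteq\Cog(T^+)={}^{\perp}\{T^+\}=\mathcal{F}\subseteq{}^{\perp}\{\tilde{T}^+\}$. You instead prove the nontrivial inclusion ${}^{\perp}\{\tilde{T}^+\}\subseteq\mathcal{F}$ directly: reducing to cyclic submodules $R/J$ with $J\supseteq I$ and $V(I)\subseteq P$ (the injection on $\Tor 1R{-}{\tilde T}$ being justified by $\fd{\tilde{T}}\le 1$), and then computing $\tilde{T}\otimes_R^{\mathbb{L}}R/J\cong(\Tor 1R{\tilde T}{R/I}\otimes_{R/I}R/J)[1]\ne 0$ using Lemma~\ref{Tor-progenerator} and the faithful flatness of the projective generator $\Tor 1R{\tilde T}{R/I}$ over $R/I$, which in turn rests on $\supph{\tilde{T}}=\Spec{R}$ -- a fact you correctly extract from Propositions~\ref{descends} and~\ref{characteristic2}. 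This is precisely the mechanism the paper deploys later on the Ext side in Proposition~\ref{prop:partial-1-tilt}; your version is correct and makes explicit a step the paper leaves implicit, at the cost of a longer argument than the one-line $\Cog$ comparison.
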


\begin{proof}
We already know that $\tilde{T}^+ \in \Prod(T^+)$ and that $T^+$ is a $1$-cotilting module with the associated cotilting class as above. In particular, conditions (C1) and (C2) are satisfied for $\tilde{T}^+$. It suffices to prove (C3'') for $\tilde{T}^+$.

Since each localizing subcategory is a tensor ideal and $S$ is flat over $R$, we have $T' = \tilde{T}\otimes_RS \in \Loc(\tilde{T})$, when we consider $T'$ as an $R$-module. By the exact sequence (T3) for $T'$, we obtain $S \in \Loc(\tilde{T})$.

When passing to character modules and complexes, we see that the colocalizing subcategory $\Coloc(\tilde{T}^+) \subseteq \mathbf{D}(R)$ contains $S^+$, which is an injective cogenerator for $\ModR$ as $\varphi$ is faithfully flat. In particular, $\Coloc(\tilde{T}^+) = \mathbf{D}(R)$ and (C3'') holds for $\tilde{T}^+$.
\end{proof}

Note that the class $\mathcal{F}$ is the torsion-free class of the hereditary torsion pair of finite type $(\mathcal{T}, \mathcal{F})$ corresponding to the Thomason set $P$. We next observe that $\tilde{T}$ can be used to describe the torsion and torsion-free classes of this torsion theory as follows.

\begin{lemma} \label{lem:tensor-perp}
With the above notation, we have $\mathcal{T} = \Ker{\tilde{T}\otimes_R-}$ and $\mathcal{F} = \Ker{\Tor 1R{\tilde T}-}$. 
\end{lemma}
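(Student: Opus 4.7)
The plan is to dualize via the character module and exploit Lemma~\ref{lem:dual-cotilting}, which identifies $\tilde T^+$ as a $1$-cotilting module with cotilting class $\mathcal{F}$. The starting point is the standard adjunction
\[
\Ext{i}{R}{M}{\tilde T^+} \cong (\Tor{i}{R}{M}{\tilde T})^+ \qquad (i\ge 0),
\]
coming from tensor--hom adjunction and the exactness of $(-)^+$ (since $W$ is injective).

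For $\mathcal{F}=\Ker{\Tor{1}{R}{\tilde T}{-}}$ the argument is essentially immediate: Lemma~\ref{lem:dual-cotilting} combined with the fact that a $1$-cotilting class has the form $\Ker{\Ext{1}{R}{-}{C}}$ gives $\mathcal{F}=\Ker{\Ext{1}{R}{-}{\tilde T^+}}$. Using the above adjunction, $M\in\mathcal{F}$ iff $(\Tor{1}{R}{M}{\tilde T})^+=0$, and this is equivalent to $\Tor{1}{R}{M}{\tilde T}=0$ since $W$ is a cogenerator.

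For $\mathcal{T}=\Ker{\tilde T\otimes_R-}$, the $i=0$ instance of the adjunction gives $\tilde T\otimes_R M=0$ iff $\Hom{R}{M}{\tilde T^+}=0$. Since $\tilde T^+\in\mathcal{F}$ by condition (C2), the torsion pair orthogonality yields $\mathcal{T}\subseteq\Ker{\Hom{R}{-}{\tilde T^+}}$ at once. For the converse, starting from $M$ with $\Hom{R}{M}{\tilde T^+}=0$, I would consider the canonical torsion decomposition $0\to t(M)\to M\to M/t(M)\to 0$ with $M/t(M)\in\mathcal{F}$; the epi $M\twoheadrightarrow M/t(M)$ induces $\Hom{R}{M/t(M)}{\tilde T^+}=0$, so it suffices to prove $M/t(M)=0$.

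The key step is therefore to show that every nonzero $N\in\mathcal{F}$ admits a nonzero morphism to $\tilde T^+$. I would invoke the cotilting coresolution from condition (C3) applied to $\tilde T^+$: an exact sequence $0\to C_1\to C_0\to W\to 0$ with $C_0,C_1\in\Prod(\tilde T^+)$. Since $\Ext{1}{R}{-}{\tilde T^+}$ vanishes on $\mathcal{F}$ and $\Ext{1}{R}{N}{-}$ commutes with products, we have $\Ext{1}{R}{N}{C_1}=0$ for every $N\in\mathcal{F}$; applying $\Hom{R}{N}{-}$ to the coresolution then produces a surjection $\Hom{R}{N}{C_0}\twoheadrightarrow\Hom{R}{N}{W}$. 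For $N\ne 0$ the cogenerator property forces $\Hom{R}{N}{W}\ne 0$, so $\Hom{R}{N}{C_0}\ne 0$, whence $\Hom{R}{N}{\tilde T^+}\ne 0$ because $C_0$ is a direct summand of a power of $\tilde T^+$. This closes the argument; none of the individual steps presents a serious obstacle, as the whole plan is a clean reduction to the cotilting input already provided by Lemma~\ref{lem:dual-cotilting}.
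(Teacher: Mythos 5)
Your proof is correct and follows the same basic route as the paper: dualize through the character module and use the adjunction $\Ext{i}{R}{M}{\tilde T^+}\cong(\Tor{i}{R}{M}{\tilde T})^+$ together with the fact that $(-)^+$ reflects vanishing. The only real difference is in how the identity $\mathcal{T}=\Ker{\Hom{R}{-}{\tilde T^+}}$ is obtained. The paper gets it essentially for free: Lemma~\ref{cotilting2}(iii) (quoted from the structure theory in \cite{HS}) already records that the torsion-free class $\mathcal{F}=\mathcal{F}(P_0)$ is cogenerated by the cotilting module, so $\mathcal{T}$, being the left orthogonal of $\mathcal{F}$ under $\Hom$, is exactly $\Ker{\Hom{R}{-}{\tilde T^+}}$. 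You instead reprove this cogeneration statement from first principles, using the coresolution $0\to C_1\to C_0\to W\to 0$ from (C3) to produce a nonzero map $N\to\tilde T^+$ for every nonzero $N\in\mathcal{F}$, and then reduce a general $M$ with $\Hom{R}{M}{\tilde T^+}=0$ to its torsion-free quotient. This costs a paragraph but makes the lemma self-contained modulo Lemma~\ref{lem:dual-cotilting}, whereas the paper's one-line proof leans on the external classification results; both arguments are sound.
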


\begin{proof}
Thanks to Lemma~\ref{lem:dual-cotilting}, we have the formulas $\mathcal{T} = \Ker{\Hom R-{\tilde{T}^+}}$ and $\mathcal{F} = \Ker{\Ext 1R-{\tilde{T}^+}}$. The conclusion follows by a standard adjunction.
\end{proof}

Now we can prove that $\tilde{T}$ is a \emph{partial 1-tilting} $R$-module, which by definition means that $\tilde{T}$ satisfies (T1), (T2) and $\tilde{T}^{\perp}$ is a $1$-tilting class. 

\begin{proposition} \label{prop:partial-1-tilt}
In the notation as above, we have $\mathcal{B} = \tilde T ^\perp$.
\end{proposition}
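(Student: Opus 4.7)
The plan is to establish $\mathcal{B} = \tilde T^\perp$ by proving both inclusions. The inclusion $\mathcal{B} \subseteq \tilde T^\perp$ will follow at once from prior work: Proposition~\ref{descends}, together with the fact that $\tilde T\otimes_RS = T' \in \mathcal A'\cap\mathcal B' = \Add(T\otimes_RS)$, gives $\tilde T \in \Add T \subseteq \mathcal A = {}^\perp\mathcal B$, whence $\mathcal B\subseteq \{\tilde T\}^\perp = \tilde T^\perp$.

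For the converse $\tilde T^\perp \subseteq \mathcal B$, my strategy is to transfer the problem to character modules. I first observe that $B \in \mathcal B$ is equivalent to $B^+\in\mathcal F$: by the Hom--tensor adjunction $(R/I\otimes_R B)^+ \cong \Hom_R(R/I, B^+)$, comparison of the defining intersections for $\mathcal B$ and for $\mathcal F$ (both indexed by finitely generated $I$ with $V(I)\subseteq P$) yields this equivalence. Combining with Lemma~\ref{lem:tensor-perp} then rephrases $B\in\mathcal B$ as $\Tor_1^R(\tilde T, B^+) = 0$. So the problem reduces to proving the single implication
\[ \Ext^1_R(\tilde T, B) = 0 \;\Longrightarrow\; \Tor_1^R(\tilde T, B^+) = 0. \]

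To produce this implication, I will fix a projective resolution $0 \to P_1 \xrightarrow{d} P_0 \to \tilde T \to 0$ (of length $1$ by (T1) for $\tilde T$) and, for each projective $R$-module $P$, consider the natural evaluation map $\nu_P\colon P\otimes_R B^+ \to \Hom_R(P, B)^+$ defined by $\nu_P(p\otimes \phi)(f) = \phi(f(p))$. The maps $\nu_P$ assemble into a commutative square connecting the horizontal arrows $d\otimes 1_{B^+}$ (whose kernel is $\Tor_1^R(\tilde T, B^+)$) and $(d^*)^+$, where the latter, by dualizing the $\Ext$-computing sequence of the resolution, has kernel equal to $\Ext^1_R(\tilde T, B)^+$ -- which vanishes by hypothesis. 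A short diagram chase, using the injectivity of $\nu_1$ and the vanishing of $\ker(d^*)^+$, will force $\Tor_1^R(\tilde T, B^+) = 0$, completing the proof.

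The one point requiring care, and the expected main obstacle, is the injectivity of $\nu_P$ for arbitrary projective $P$: $\nu_P$ is an isomorphism only in the finitely generated case and becomes merely a proper embedding once $P$ has infinite rank, so the usual Hom--tensor isomorphism is unavailable. Fortunately, the weaker injectivity is enough for the chase. It is established by checking the case $P = R$ (where $\nu_P$ is the identity), extending to free modules $R^{(\Lambda)}$ by testing on individual basis vectors, and finally passing to direct summands.
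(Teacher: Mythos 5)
Your proof is correct, but the hard inclusion is handled by a genuinely different argument than the paper's. You reduce $\tilde T^\perp\subseteq\mathcal B$ to the single implication $\Ext{1}{R}{\tilde T}{B}=0\Rightarrow\Tor{1}{R}{\tilde T}{B^+}=0$, using the equivalence $B\in\mathcal B\Leftrightarrow B^+\in\mathcal F$ (adjunction plus faithfulness of $(-)^+$) together with Lemma~\ref{lem:tensor-perp}; you then get that implication from the injectivity of the natural map $\nu_{P_1}\colon P_1\otimes_RB^+\to\Hom{R}{P_1}{B}^+$ for a resolution $0\to P_1\xrightarrow{d}P_0\to\tilde T\to 0$. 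This works precisely because the resolution has length $1$: $\Tor{1}{R}{\tilde T}{B^+}$ is literally $\ker(d\otimes 1)$ and $\Ext{1}{R}{\tilde T}{B}^+$ is literally $\ker((d^*)^+)$, so the top-degree comparison map is a restriction of $\nu_{P_1}$ and no subquotient issues arise; your handling of the injectivity of $\nu_P$ for infinitely generated projectives (free case via basis vectors, then summands) is exactly right. The paper argues quite differently: it shows that every $M$ (reduced to $M=R$ by a pushout) admits a short exact sequence $0\to M\to B\to\tilde T^{(J)}\to 0$ with $B\in\mathcal B$, taking a Bongartz completion and proving surjectivity of the connecting map $\Tor{1}{R}{\tilde T^{(J)}}{R/I}\to R/I$; the essential input there is Lemma~\ref{Tor-progenerator}, i.e.\ that $\Tor{1}{R}{\tilde T}{R/I}$ is a projective generator of $\rmod{R/I}$, which is where $\supph{\tilde T}=\Spec{R}$ enters directly. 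Your route is shorter and bypasses both the Bongartz construction and Lemma~\ref{Tor-progenerator} entirely (full support enters only implicitly, through Lemma~\ref{lem:dual-cotilting}, which you quote); its limitation is that the top-degree trick is special to projective dimension $\le 1$, whereas the paper's construction additionally produces an explicit $\mathcal B$-preenvelope of $R$ with cokernel in $\Add(\tilde T)$, closer in spirit to the (T3)-type conditions the section is ultimately after. The easy inclusion ($\tilde T\in\Add(T)$ via Proposition~\ref{descends}, hence $\mathcal B\subseteq\tilde T^\perp$) is the same in both.
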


\begin{proof}
Since $\mathcal{B} = \Ker{\Ext 1R{T}{-}}$ and $\tilde{T}\in\Add(T)$, we certainly have
\[ \mathcal{B} \subseteq \Ker{\Ext 1R{\tilde T}{-}}. \]

For the other inclusion, it suffices to show that each $M \in \ModR$ admits a short exact sequence of the form $0 \to M \to B \to \tilde{T}^{(J)} \to 0$ with $B \in \mathcal{B}$. Indeed, such a sequence splits for any $M \in \Ker{\Ext 1R{\tilde T}{-}}$, so any such $M$ lies in $\mathcal{B}$. Furthermore, since $\mathcal{B}$ is closed under direct sums and epimorphic images, it suffices to construct such an exact sequence only for $M = R$. Indeed, for general $M$ we can take an epimorphism $\pi\colon R^{(K)} \to M$ and a pushout of a direct sum of $K$ copies of $0 \to R \to B \to \tilde{T}^{(J)} \to 0$ along $\pi$.

To construct the sequence of $R$, we consider the so-called Bongartz completion, i.e.\ any short exact sequence
\[ \varepsilon\colon \quad 0 \longrightarrow R \longrightarrow B \longrightarrow \tilde{T}^{(J)} \longrightarrow 0 \]
such that the connecting homomorphism
\[ \delta\colon \Hom R{\tilde T}{\tilde{T}^{(J)}} \to \Ext 1R{\tilde T}R \]
is surjective.

Our task is to prove that $B \in \mathcal{B}$, or equivalently that $B\otimes_RR/I = 0$ for any fixed finitely generated ideal $I\subseteq R$ such that $V(I)\subseteq P$. If we apply $-\otimes_R R/I$ to $\varepsilon$, we get in view of Lemma~\ref{lem:tensor-perp} an exact sequence
\[ \Tor 1R{\tilde{T}^{(J)}}{R/I} \overset{\gamma_I}\longrightarrow R/I \longrightarrow B\otimes_RR/I \longrightarrow 0. \]
Hence, our task is equivalent to proving that the connecting homomorphism $\gamma_I$ is surjective.  

Observe, moreover, that it suffices to construct any extension $\varepsilon'\colon 0 \to R \to E \to \tilde{T}^{(K)}
 \to 0$ for which the connective homomorphism $\gamma'\colon \Tor 1R{\tilde{T}^{(K)}}{R/I} \to R \otimes_R R/I \cong R/I$ is surjective. Indeed, since $\varepsilon$ is a Bongartz completion and $\delta$ above is surjective, there exists a commutative diagram
\[
\begin{CD}
\varepsilon'\colon\quad @. 0 @>>> R @>>> E @>>> \tilde{T}^{(K)} @>>> 0 \\
@.                         @.    @|    @VVV     @VVV                   \\
\varepsilon\colon\quad  @. 0 @>>> R @>>> B @>>> \tilde{T}^{(J)} @>>> 0 \\
\end{CD}
\]
Using the naturality of the long exact sequences obtained by applying $-\otimes_R R/I$ to the rows of the diagram, it follows that $\gamma'$ factorizes through $\gamma_I$. In particular, $\gamma_I$ is surjective provided that $\gamma'$ is such.

In order to construct an extension $\varepsilon'$ with $\gamma'$ surjective, we start with an arbitrary (split) epimorphism of $R/I$-modules
\[ \rho\colon \Tor 1R{\tilde{T}^{(K)}}{R/I} \cong \Tor 1R{\tilde T}{R/I}^{(K)} \longrightarrow R/I. \]
Such an epimorphism exists by Lemma~\ref{Tor-progenerator}. We also fix a projective presentation
\[ 0 \longrightarrow P_1 \longrightarrow P_0 \longrightarrow \tilde{T}^{(K)} \to 0. \]
Since, as in the proof of Lemma~\ref{Tor-progenerator}, we have a split embedding $\Tor 1R{\tilde{T}^{(K)}}{R/I} \to P_1 \otimes_R R/I$, we can extend $\rho$ to a map of $R/I$-modules $\rho'\colon P_1 \otimes_R R/I \to R/I$ and compose it further with the projection $P_1 \cong P_1 \otimes_R R \to P_1 \otimes_R R/I$ to obtain a homomorphism of $R$-modules $P_1 \to R/I$. As $P_1$ is projective, the latter morphism factorizes as $P_1 \overset{\vartheta}\to R \to R/I$, where the second map is the canonical projection. The construction is summarized in the following diagram, where the lower row composes to $\rho$,
\[
\begin{CD}
                              @.                     P_1             @>{\vartheta}>> R  \\
@.                                                  @VVV                  @VVV          \\
\Tor 1R{\tilde{T}^{(K)}}{R/I} @>>{\subseteq_\oplus}> P_1 \otimes_R R/I @>>{\rho'}> R/I  \\
\end{CD}
\]

Finally, we construct $\varepsilon'$ as the lower row of the pushout diagram
\[
\begin{CD}
                        @. 0 @>>> P_1 @>>>  P_0 @>>> \tilde{T}^{(K)} @>>> 0 \\
@.                      @. @V{\vartheta}VV  @VVV     @|                     \\
\varepsilon'\colon\quad @. 0 @>>> R   @>>>  E   @>>> \tilde{T}^{(K)} @>>> 0 \\
\end{CD}
\]

It is now a standard computation that $\gamma'\colon \Tor 1R{\tilde{T}^{(K)}}{R/I} \to R/I$, the connecting homomorphism, coincides with the composition
\[ \Tor 1R{\tilde{T}^{(K)}}{R/I} \overset{\subseteq_\oplus}\longrightarrow P_1 \otimes_R R/I \overset{\vartheta\otimes_R R/I}\longrightarrow R/I \] 
and that $\vartheta\otimes_R R/I$ identifies with $\rho'$. It follows that $\gamma' = \rho$, and $\rho$ is surjective from the construction.
\end{proof}

Before finally proving the faithfully flat descend for 1-tilting modules, we need to introduce one last notion---the tilting ring epimorphism. These were introduced in \cite{CTT}, and here we will follow \cite[\S 4]{MS}. Suppose that $\tilde T$ is a partial 1-tilting module. Then $\mathcal{X}_{\tilde T}=\{M \in \ModR \mid \Hom{R}{\tilde T}{M}=0 ~\&~ \Ext{1}{R}{\tilde T}{M}=0\}$ is a \emph{bireflective category} (that is, a full subcategory closed under kernels, cokernels, products and coproducts), that is also closed under extensions. Any bireflective subcategory $\mathcal{X}$ gives rise to a ring epimorphism $\lambda_\mathcal{X}: R \rightarrow U$ such that $\mathcal{X}$ is precisely the essential image of the scalar restriction functor $\rmod{U} \rightarrow \ModR$. In this way, we assign to a partial tilting module $\tilde T$ a \emph{tilting ring epimorphism} $\lambda_{\tilde T}:=\lambda_{\mathcal{X}_{\tilde{T}}}$.

\begin{lemma}\label{lem:residue-duality}
	Let $M$ be an $R$-module and $\p$ a prime. Then for any $i \geq 0$ we have
		$$\Tor{i}{R}{M}{\kappa(\p)}=0 \iff \Ext{i}{R}{M}{\kappa(\p)}=0.$$
\end{lemma}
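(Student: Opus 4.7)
The plan is to proceed in two steps: first I would reduce to the case where $R$ is local with maximal ideal $\p$, and then show that in the local setting the relevant $\Ext$ and $\Tor$ groups are naturally $k$-linear duals of each other, which immediately yields the biconditional.

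For the reduction, the key observation is that $\kappa(\p)$ is already an $R_\p$-module, so both $\Tor{i}{R}{M}{\kappa(\p)}$ and $\Ext{i}{R}{M}{\kappa(\p)}$ coincide with their own localizations at $\p$. By Lemma~\ref{localize}(2) we will have $\Tor{i}{R}{M}{\kappa(\p)} \cong \Tor{i}{R_\p}{M_\p}{\kappa(\p)}$, and by Lemma~\ref{localize}(3) we will have $\Ext{i}{R}{M}{\kappa(\p)} \cong \Ext{i}{R_\p}{M_\p}{\kappa(\p)}$. This lets me replace $R, M$ by $R_\p, M_\p$ and assume that $R$ is local with maximal ideal $\m = \p R_\p$ and residue field $k = \kappa(\p)$.

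In this local setting, I would pick a projective resolution $P_\bullet \to M$. Then $\Tor{i}{R}{M}{k} = H_i(P_\bullet \otimes_R k)$ and $\Ext{i}{R}{M}{k} = H^i(\Hom{R}{P_\bullet}{k})$. The standard tensor--hom adjunction along the ring surjection $R \twoheadrightarrow k$ gives $\Hom{R}{P_j}{k} \cong \Hom{k}{P_j \otimes_R k}{k}$ naturally in $j$, so $\Hom{R}{P_\bullet}{k}$ is the termwise $k$-linear dual of the complex $P_\bullet \otimes_R k$ of $k$-vector spaces. Since $k$-linear duality is exact, it commutes with (co)homology, which yields the isomorphism of $k$-vector spaces
$$\Ext{i}{R}{M}{k} \;\cong\; \Hom{k}{\Tor{i}{R}{M}{k}}{k}.$$
A $k$-vector space is zero if and only if its $k$-dual is, and the claimed equivalence follows at once.

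I do not expect any substantive obstacle. The only point requiring some care is the application of Lemma~\ref{localize}(3) in the $\Ext$-localization step: that statement does not require $M$ to be finitely presented, because the second variable $\kappa(\p)$ is already a module over $R_\p$, so the adjunction isomorphism supplied there is available.
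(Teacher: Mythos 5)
Your proof is correct. It follows the same underlying principle as the paper's proof---exhibiting $\Ext iRM{\kappa(\p)}$ as the dual of $\Tor iRM{\kappa(\p)}$ against a cogenerator via tensor--hom adjunction---but the implementation is genuinely different. The paper does not localize at all: it applies the derived adjunction directly over $R$ with the injective envelope $E(R/\p)$ as dualizing object, computes $\Hom R{\kappa(\p)}{E(R/\p)}\cong\kappa(\p)$ (using that $E(R/\p)$ is an injective $R_\p$-module with $\kappa(\p)$ essential in it), and concludes because $E(\kappa(\p))$ cogenerates $\rmod{R_\p}$. You instead first pass to $R_\p$ via the flat change-of-rings isomorphisms and then dualize over the residue field $k$ itself, so that the cogenerator is just $k$ and exactness of duality is trivial. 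Your route is more elementary in that it needs nothing about injective envelopes, at the cost of the (harmless, and correctly justified) localization step for $\Ext$; the paper's route buys a one-ring argument with no base change on $M$. Both are complete proofs, and your closing remark about why Lemma~\ref{localize}(3) applies without finiteness hypotheses on $M$ is exactly the right point to flag.
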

\begin{proof}
		Using the (derived) Hom-$\otimes$ adjunction, there is an isomorphism
		\begin{equation}\label{eq:residue-duality}
		\Ext{i}{R}{M}{\Hom{R}{\kappa(\p)}{E(R/\p)}} \cong \Hom{R}{\Tor{i}{R}{M}{\kappa(\p)}}{E(R/\p)}.
		\end{equation}
		Since $E(R/\p)$ is an $R_\p$-module by~\cite[Theorem 3.3.8(1)]{EJ} (whose proof works for any commutative ring), $E(R/\p)$ is an injective $R_{\p}$-module, and $E(R/\p)$ contains a copy of $\kappa(\p)$ as an essential submodule. Thus, $E(R/\p) = E(\kappa(\p))$ is an injective envelope of $\kappa(\p)$ and we have 
		$$\Hom{R}{\kappa(\p)}{E(R/\p)} \cong \Hom{R_\p}{\kappa(\p)}{E(\kappa(\p))} \cong \kappa(\p).$$
		Because $\Tor{i}{R}{M}{\kappa(\p)}$ is an $R_\p$-module, we can rewrite (\ref{eq:residue-duality}) as
		$$
		\Ext{i}{R}{M}{\kappa(\p)} \cong \Hom{R_\p}{\Tor{i}{R}{M}{\kappa(\p)}}{E(\kappa(\p))}.
		$$
		which concludes the proof, as $E(\kappa(\p))$ is a cogenerator for $\rmod{R_\p}$ (\cite[Proposition 18.15]{AF}).
\end{proof}

\begin{theorem}\label{main-1dim}
	The notion of a 1-tilting module is an ad-property.
\end{theorem}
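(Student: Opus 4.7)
My plan is to strengthen Proposition~\ref{prop:partial-1-tilt} from ``partial 1-tilting'' to ``1-tilting'' for the module $\tilde T$. By Corollary~\ref{acl1} the ascent direction is already settled, so the ad-property reduces to showing that if $\varphi\colon R\to S$ is faithfully flat and $T'=\tilde T\otimes_R S$ is 1-tilting over $S$, then $\tilde T$ is 1-tilting over $R$. Proposition~\ref{prop:partial-1-tilt} already exhibits $\tilde T$ as a partial 1-tilting module with $\tilde T^\perp=\mathcal B$, where $(\mathcal A,\mathcal B)$ is the 1-tilting cotorsion pair induced by the module $T$ from Proposition~\ref{descends}; what remains is condition (T3).

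The natural tool is the tilting ring epimorphism formalism introduced just above. Since $\tilde T^\perp=T^\perp=\mathcal B$ forces the bireflective subcategories $\mathcal X_{\tilde T}$ and $\mathcal X_T$ to coincide, the tilting ring epimorphism $\lambda_{\tilde T}\colon R\to U$ agrees with $\lambda_T$. The canonical short exact sequence $0\to R\to U\to U/R\to 0$ then has $U,U/R\in\mathcal A\cap\mathcal B=\Add(T)$, and is the tilting sequence witnessing the equivalence $T\sim U\oplus U/R$. The task thus reduces to upgrading the membership $U,U/R\in\Add(T)$ to $U,U/R\in\Add(\tilde T)$.

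For this final step I would combine the cotilting counterpart provided by Lemma~\ref{lem:dual-cotilting}---namely that $\tilde T^+$ is an honest 1-cotilting $R$-module, so $\Prod(\tilde T^+)=\Prod(T^+)$ is the kernel of the corresponding cotilting cotorsion pair---with the $\Add$--$\Prod$ duality on commutative rings and with Lemma~\ref{lem:residue-duality}, which interchanges $\Tor$- and $\Ext$-vanishing at each residue field $\kappa(\p)$. Exploiting moreover that $\supph{\tilde T}=\Spec R$, which is pushed down from $\supph{T'}=\Spec S$ via faithful flatness of $\varphi$ together with Proposition~\ref{characteristic2}, one verifies that $\tilde T$ has the correct local cohomological behavior to force $U$ and $U/R$ into $\Add(\tilde T)$. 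A Bongartz-style completion as in the proof of Proposition~\ref{prop:partial-1-tilt} then yields the tilting sequence (T3) for $\tilde T$.

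The main obstacle is precisely this $\Add$-descent step, which in general cannot be accomplished by faithful flatness alone. For $n=1$ we are rescued by Lemma~\ref{lem:dual-cotilting}: the character module $\tilde T^+$ is genuinely 1-cotilting, and the clean structure of hereditary torsion pairs of finite type on commutative rings---captured by the tilting ring epimorphism $R\to U$---provides enough rigidity to transfer the information back through the duality $(-)^+$. For $n\ge 2$ this rigidity is absent: one would need an analog of Proposition~\ref{prop:partial-1-tilt} in higher dimensions that the present methods do not supply, which is exactly the open problem flagged in the introduction.
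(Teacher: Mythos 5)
Your reduction to the descent of (T3), your use of Proposition~\ref{prop:partial-1-tilt}, and your instinct to bring in the tilting ring epimorphism, $\supph{\tilde T}=\Spec R$, and Lemma~\ref{lem:residue-duality} all match the paper's strategy. However, the central step of your argument is wrong. You claim that $\tilde T^\perp = T^\perp$ forces $\mathcal X_{\tilde T}=\mathcal X_T$, hence $\lambda_{\tilde T}=\lambda_T$. This does not follow: $\mathcal X_M$ is cut out by \emph{both} $\Ker\Hom R M-$ and $\Ker{\Ext 1RM-}$, and equality of the Ext-orthogonals only gives $\mathcal X_T\subseteq\mathcal X_{\tilde T}$ (since $\tilde T\in\Add(T)$), not the reverse. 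A concrete counterexample to your claim: over $R=\mathbb Z$ take $T=\mathbb Q\oplus\mathbb Q/\mathbb Z$ and $\tilde T=\mathbb Q$; then $\tilde T^\perp=T^\perp$ is the class of divisible groups, yet $\mathcal X_T=0$ while $\mathbb Z/p\in\mathcal X_{\mathbb Q}\neq 0$, and indeed $\mathbb Q$ is not $1$-tilting. Establishing $\mathcal X_{\tilde T}=0$ \emph{is} the whole content of the remaining step, and it genuinely requires the extra hypothesis $\supph{\tilde T}=\Spec R$ (which fails for $\mathbb Q$), so it cannot be deduced from the equality of perpendicular classes alone.

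Your subsequent plan is also off target: there is no ``canonical sequence $0\to R\to U\to U/R\to 0$ witnessing $T\sim U\oplus U/R$'' in this setting --- that picture belongs to tilting modules arising from injective ring epimorphisms, whereas here the goal is precisely to show $U=0$ (after which $\tilde T$ is $1$-tilting by \cite[Lemma 14.2]{GT}, with no further $\Add$-descent needed). The paper's actual argument runs: by Lazard's description of $\Spec U$ for a ring epimorphism, $U\neq 0$ if and only if some residue field $\kappa(\p)$ lies in $\mathcal X_{\tilde T}$; by Lemma~\ref{lem:residue-duality} this would give $\Tor{i}{R}{\tilde T}{\kappa(\p)}=0$ for $i=0,1$, hence for all $i$ since $\pd{\tilde T}\leq 1$, contradicting $\supph{\tilde T}=\Spec R$. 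Your proposal gestures at these ingredients but never assembles them into this (or any complete) argument, so the proof has a genuine gap at its decisive step.
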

\begin{proof}
		Let $\varphi: R \rightarrow S$ be a faithfully flat ring homomorphism and let $\tilde{T}$ be an $R$-module such that $T'=\tilde{T}\otimes_RS$ is a 1-tilting $S$-module. By Proposition~\ref{prop:partial-1-tilt} we know that $\tilde{T}$ is a partial 1-tilting module and we will prove that it is actually a 1-tilting module. Let $\lambda_{\tilde{T}}: R \rightarrow U$ be the tilting ring epimorphism associated to $\tilde{T}$ in the sense of the paragraph above. It is enough to show that $U=0$, because then $\mathcal{X}_{\tilde{T}}=0$, showing that $\tilde{T}$ is 1-tilting by \cite[Lemma 14.2]{GT}. 

		Next we use \cite[\S IV, Proposition 1.4]{L} to infer that $\lambda_{\tilde{T}}^*: \Spec{U} \xhookrightarrow{} \Spec{R}$ is an injective map with its image being equal to 
		$$\{\p \in \Spec{R} \mid \kappa(\p) \otimes_R U \neq 0\} = \{\p \in \Spec{R} \mid \kappa(\p) \otimes_R U \cong \kappa(\p) \}.$$ 
		In particular, by the identification of $\mathcal{X}_{\tilde T}$ with $\rmod{U}$,
		$$U \neq 0 \iff (\exists \p \in \Spec{R})(\kappa(\p) \in \mathcal{X}_{\tilde T}).$$
		If there is $\kappa(\p) \in \mathcal{X}_{\tilde T}$, then Lemma~\ref{lem:residue-duality} implies that $\Tor{i}{R}{\tilde T}{\kappa(\p)}=0$ for $i=0,1$. Since $\pd \tilde T \leq 1$, the latter vanishing actually holds for any $i \geq 0$, which is a contradiction with $\supph{\tilde T} = \Spec{R}$. Therefore, $U=0$, and thus we conclude that $\tilde{T}$ is a 1-tilting $R$-module.
\end{proof}

\end{document}